\newtheorem{theorem}{Theorem}[section]
\newtheorem{lemma}[theorem]{Lemma}
\newtheorem{follow}[theorem]{Corollary}
\newtheorem{corl}[theorem]{Corollary}
\newtheorem{prop}[theorem]{Proposition}
\theoremstyle{definition}
\theoremstyle{remark}
\newtheorem{remark}[theorem]{\bf Remark}
\numberwithin{equation}{section}
\newcommand{\bel}{\begin{equation} \label}
\newcommand{\ee}{\end{equation}}
\newcommand{\rt}{{\mathbb R}^{3}}
\newcommand{\rd}{{\mathbb R}^{2}}
\newcommand{\re}{{\mathbb R}}
\newcommand{\bx}{{\bf x}}
\newcommand{\eps}{{\varepsilon}}
\newcommand{\cal}{\mathcal}
\newcommand{\dcot}{\dot{C}^{\infty}(\Omega_\theta)}
\newcommand{\B}{\mathcal{B}}
\newcommand{\E}{\mathcal{E}}
\newcommand{\h}{\rm H}
\newcommand{\J}{\mathcal{J}}
\newcommand{\K}{\mathcal{K}}
\newcommand{\el}{\mathcal{L}}
\newcommand{\M}{\mathcal{M}}
\newcommand{\pd}{\partial}
\renewcommand{\epsilon}{\varepsilon}
\newcommand{\id}{\mathds{1}}
\newcommand{\F}{\mathcal{F}}
\newcommand{\N}{\mathbb{N}}
\begin{document}
\title{Scattering in twisted waveguides}

\author{Philippe Briet}

\author {Hynek Kova\v{r}\'{\i}k}

\author {Georgi  Raikov}

\begin{abstract}

We  consider a twisted  quantum waveguide, i.e.,  a domain of the
form $\Omega_{\theta} : =  r_\theta \omega \times \re$ where
$\omega \subset \rd$ is a  bounded domain, and $r_\theta=
r_\theta(x_3) $ is  a rotation by the angle $\theta(x_3)$
depending on the longitudinal variable $x_3$. We investigate the
nature of the essential spectrum of the Dirichlet Laplacian
${\mathcal H}_\theta$, self-adjoint in ${\rm L}^2(\Omega_\theta)$,
and consider related scattering problems. First, we show that if
the derivative of the difference $\theta_1 - \theta_2$ decays fast
enough as $|x_3| \to \infty$, then the wave operators for the
operator pair $({\mathcal H}_{\theta_1}, {\mathcal H}_{\theta_2})$
exist and are complete. Further, we concentrate on appropriate
perturbations of constant twisting, i.e.  $\theta' = \beta -
\varepsilon$ with constant $\beta \in \re$, and $\varepsilon$
which  decays fast enough at infinity together with its first
derivative. In that case the unperturbed operator corresponding to
$\varepsilon$ is an analytically fibered Hamiltonian with purely
absolutely continuous spectrum. Obtaining Mourre estimates with a
suitable conjugate operator, we prove, in particular, that the
singular continuous spectrum of ${\mathcal H}_\theta$ is empty.
\end{abstract}

\maketitle

{\bf  AMS 2000 Mathematics Subject Classification:} 35P05, 35P25, 47A10, 81Q10\\

{\bf  Keywords:}
Twisted waveguides, wave operators, Mourre estimates \\


\section{Introduction}

Let $\omega \subset \rd$ be a bounded domain  with boundary
$\partial \omega \in C^2$.  Denote by
$\Omega : = \omega \times \re$ the straight tube in $\rt$. For a given
$\theta\in C^1(\re,\re)$ we define the twisted tube $\Omega_\theta$ by
$$
\Omega_\theta = \left\{ r_\theta(x_3)\, x \in \re^3\,| \, x = (x_1,
x_2, x_3) \in \re^3,\  x_\omega : = (x_1, x_2) \in\omega\right\},
$$
where
$$
r_\theta(x_3) = \left( \begin{array}{rcc}
\cos \theta(x_3) & \sin \theta(x_3) & 0 \\
-\sin \theta(x_3) & \cos \theta(x_3) & 0 \\
0 & 0& 1 \end{array} \right).
$$
 We define the Dirichlet Laplacian
${\mathcal H}_\theta$ as the unique self-adjoint operator
generated in ${\rm L}^2(\Omega_\theta)$  by the closed quadratic form
\begin{equation}
{\mathcal Q}_{\theta}[u] : = \int_{\Omega_\theta} |\nabla u|^2\, d
\bx , \qquad u\in {\rm D}({\mathcal Q}_\theta) : = \h_0^1(\Omega_\theta).
\end{equation}
In fact, we do not work directly with ${\cal H}_\theta$, but rather with a unitarily equivalent operator $H_{\theta'}$ acting in the straight tube $\Omega$, see \eqref{gr1}. The related unitary transformation is generated by a change of variables which maps the twisted tube $\Omega_\theta$ onto the straight tube $\Omega$, see equation \eqref{gr21}.

The goal of the present article is to study the nature of the
essential spectrum of the operator ${\mathcal H}_\theta$ under
appropriate assumptions about the twisting angle $\theta$.
Although the spectral properties of a twisted waveguide have been intensively studied
in recent years, attention has been paid mostly to the
discrete spectrum of ${\cal H}_\theta$, \cite{BrKoRaSo, ExKo, Gr},
or to the Hardy inequality for ${\cal H}_\theta$, \cite{ekk}.

\noindent In this article we  discuss the influence of twisting on
the nature of the essential spectrum of ${\cal H}_\theta$. First,
we show that if the difference $\theta_1' - \theta_2'$ decays fast
enough as $|x_3| \to \infty$, then the wave operators for the
operator pair $(H_{\theta_1'}, H_{\theta_2'})$ exist and are
complete, and in particular, the absolutely continuous spectra of
$H_{\theta_1'}$ and  $H_{\theta_2'}$ coincide. Further, we observe
that if $\theta'=\beta$ is constant, then the operator $H_\beta$
is analytically fibered, cf. \eqref{hbeta}, and therefore its
singular continuous spectrum is empty, \cite{FiSo,GeNi}. Assuming
that $\theta'(x_3) = \beta -\eps(x_3)$ with $\eps\in
C^1(\re,\re)$, we then show that if $\eps$ decays fast enough at
infinity, then $H_{\theta'}$ has no singular continuous spectrum,
see Theorem \ref{main}. The proof of Theorem \ref{main} is based
on the Mourre commutator method, \cite{Mo,pss,ABG}. We construct
a suitable conjugate operator $A$ and show that the commutator
$[H_{\theta'}, iA]$ satisfies a Mourre estimate on  sufficiently small intervals
outside a discrete subset of $\re$, Theorem \ref{mourre-estim}.
The construction of the conjugate operator is based on a careful
analysis of the band functions $E_n(k)$ of the unperturbed
operator $H_\beta$, $k\in\re$ being the Fourier variable dual to
$x_3$. A similar strategy was used in \cite{GL, ABBFR, BrRaSo,KT},
where the generator of dilations in the longitudinal direction of
the waveguide was used as a conjugate operator. However, in the
situations studied  in these works the associated band functions
have a non zero derivative everywhere except for the origin. In
our model, contrary to \cite{GL, ABBFR, BrRaSo,KT}, the band
functions $E_n$ may have many stationary points. In addition, we
have to take into account possible crossing points between
different band functions. The generator of dilations therefore
cannot be used as a conjugate operator in our case, and a
different approach is needed. Our conjugate operator acts in the
fibered space as
\begin{equation} \label{conj}
\frac{i}{2}\left( \gamma(k)\, \pd_k + \pd_k\, \gamma(k)\right)
\end{equation}
where $\gamma\in C^\infty_0(\re;\re)$ is a suitably chosen
function, whose particular form depends on the interval on which
the Mourre estimate is established, see Theorem \ref{thm1}.

\noindent We would like to mention that a general theory of Mourre
estimates for analytically fibered operators  and their appropriate perturbations was developed in
\cite{GeNi}.
The situation with  the twisted waveguide analyzed  in the present article  is much more specific than the general abstract scheme
studied in \cite{GeNi}. Hence, although the
construction in \eqref{conj} is influenced in some extent by \cite{GeNi}, our conjugate operator is essentially different from the one used in \cite{GeNi}, and is considerably more useful for our purposes. In particular, the construction of this quite explicit  conjugate operator allows us to handle the specific second-order differential perturbation which arises in the context of the twisted waveguide, and to  verify all the regularity conditions for
$e^{itA}, [H_{\theta'}, iA]$ and $[[H_{\theta'}, iA], iA]$ needed
for the passage from the Mourre estimate to the absence of the
singular continuous spectrum, see Proposition \ref{conditions}.
We have thus been able to apply the Mourre theory to the perturbed operator $H_{\theta'}$,
and to find simple and efficient sufficient conditions on
$\varepsilon$ under which the singular continuous spectrum of $H_{\theta'}$ is
empty. We therefore believe that our construction of the conjugate
operator might be of independent interest.

\noindent The article is organized as follows. In Section
\ref{sect-main} we state our main results. In Section
\ref{sect-domain} we prove Proposition \ref{op-domain} describing
the domain of the operator ${\mathcal H}_\theta$. In Section
\ref{sect-wave} we prove Theorem \ref{t1} which entails the
existence and the completeness of the wave operators for the
operator pair $(H_{\theta_1'},H_{\theta_2'})$ for appropriate
$\theta_1' - \theta_2'$, and hence the coincidence of $\sigma_{\rm
ac}(H_{\theta_1'})$ and $\sigma_{\rm ac}(H_{\theta_2'})$. In
Section \ref{sect-const-twist} we assume that the twisting is
constant, i.e. $\theta' = \beta$ and examine the spectral and
analytical properties of the fiber family $h_\beta(k)$, $k  \in
\re$. In Section \ref{sect-conjugate} we construct the conjugate
operator needed for the subsequent Mourre estimates. In Section
\ref{sect-mourre} we obtain Mourre estimates for the case of a
constant twisting. Finally, in Section \ref{sect-perturbation} we
extend these estimates to the case of $\theta' = \beta -
\varepsilon$ where $\beta \in \re$, and $\varepsilon$ decays fast
enough together with its first derivative.

\section{Main results}
\label{sect-main}
\subsection{Notation} \label{subsect-notation}
\noindent Let us fix some notation. Given a measure space $(M,
{\mathcal A},\mu)$, we denote by $\id_M$  the identity operator in
${\rm L}^2(M) = {\rm L}^2(M;d\mu)$. Further,  we will denote by
$(u,v)_{{\rm L}^2(M)} = \int_M \bar{u}\, v d\mu$ the scalar
product in ${\rm L}^2(M)$ and by $\|u\|_{{\rm L}^p(M)}$, $p \in
[1,\infty]$, the ${\rm L}^p$-norm of $u$.  If there is no risk of
confusion we will drop the indication to the set $M$ and write
$(u,v)$ and $\|u\|_p$ instead in order to simplify the notation.
Given a set $M$ and two functions $f_1,\, f_2:M\to\re$, we write
$f_1(m) \asymp f_2(m), \ m\in M$, if there exists a constant  $c \in (0,\infty)$ such that for each $m \in M$ we have
$$
c^{-1}\, f_1(m) \, \leq \, f_2(m) \, \leq \, c\, f_1(m).
$$
\noindent Given a separable Hilbert space  $X$, we denote by
$\mathcal{L}(X)$ (resp., $S_{\infty}(X))$ the class of bounded
(resp., compact) linear operators acting in $X$. Similarly, by
$S_p(X)$, $p \in [1,\infty)$, we denote the Schatten-von Neumann
classes of compact operators acting in $X$; we recall that the
norm in $S_p(X)$ is defined as $\|T\|_{S_p} : = \left({\rm
Tr}\,(T^* T)^{p/2}\right)^{1/p}$, $T \in S_p(X)$. In particular,
$S_1$ is the trace class, and $S_2$ is the Hilbert-Schmidt class.
Moreover, if $T$ is a self-adjoint operator acting in $X$, we
denote by ${\rm D}(T)$ the operator domain of $T$. Finally, for
$\alpha \in \re$ define the function
    \bel{ngr1}
\phi_\alpha(s) : = (1 + s^2)^{-\alpha/2}, \quad s \in \re.
    \ee

\subsection{Domain issues.} \label{subsect-domain} Our first result shows
that if both $\theta'$ and $\theta''$ are
continuous and bounded, then the domain of the operator ${\mathcal
H}_\theta$ coincides with $\h^2(\Omega_{\theta}) \cap
\h_0^1(\Omega_{\theta})$.

\begin{prop} \label{op-domain} Assume that $\omega \subset \rd$ is a bounded domain with
    boundary $\partial \omega \in C^2$, and $\theta \in C^2(\re)$
    with $\theta', \theta'' \in {\rm L}^\infty(\re)$. Then
    \bel{gr18}
    {\rm D}\,({\mathcal H}_\theta) = \h^2(\Omega_\theta) \cap
    \h_0^1(\Omega_\theta).
    \ee
\end{prop}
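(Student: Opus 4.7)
The plan is to transfer the problem via the unitary equivalence (indicated after equation \eqref{gr1}) to the operator $H_{\theta'}$ on the straight tube $\Omega = \omega \times \re$. The underlying change of variables $\Phi : \Omega \to \Omega_\theta$ is a $C^{1,1}$-diffeomorphism with bounded first and second derivatives (thanks to $\theta',\theta'' \in L^\infty$), so it induces a bounded bijection $\h^2(\Omega) \cap \h_0^1(\Omega) \to \h^2(\Omega_\theta) \cap \h_0^1(\Omega_\theta)$, and \eqref{gr18} is equivalent to the analogous identity ${\rm D}(H_{\theta'}) = \h^2(\Omega) \cap \h_0^1(\Omega)$. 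A direct computation of the transformed quadratic form yields
\begin{equation*}
H_{\theta'} \;=\; -\Delta_\omega - (\pd_{x_3} + \theta'(x_3)\,\pd_\tau)^2, \qquad \pd_\tau := x_1 \pd_{x_2} - x_2 \pd_{x_1},
\end{equation*}
a uniformly elliptic operator whose principal part coincides with $-\Delta$ and whose remaining coefficients are globally bounded and Lipschitz in $x_3$ (the factor $\theta'^2$ is Lipschitz because $\theta'' \in L^\infty$, and the transverse coefficients $x_1, x_2$ are bounded because $\omega$ is bounded).

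The easy inclusion $\h^2(\Omega) \cap \h_0^1(\Omega) \subset {\rm D}(H_{\theta'})$ is a direct verification: for any $u$ in this intersection, every term in the explicit expansion of $H_{\theta'} u$ lies in $L^2(\Omega)$, and an integration by parts against $v \in C^\infty_c(\Omega)$ (which is dense in $\h_0^1(\Omega)$) identifies the resulting $L^2$-function with the Riesz representative of the sesquilinear form $\mathcal{Q}_{\theta'}[u, \cdot]$.

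For the reverse inclusion, given $u \in {\rm D}(H_{\theta'})$ with $H_{\theta'} u = f \in L^2(\Omega)$, the plan is to establish $u \in \h^2(\Omega)$ by elliptic regularity, decoupling transverse and longitudinal directions. For the transverse variables, the $C^2$ regularity of $\pd\omega$ together with standard boundary regularity for the two-dimensional Dirichlet Laplacian yield, slice by slice in $x_3$, an $\h^2(\omega)$-estimate whose constant is independent of $x_3$. For the longitudinal direction, since $\pd\Omega$ has no $x_3$-component, Nirenberg's difference-quotient method in $x_3$ delivers $\pd_{x_3}^2 u \in L^2(\Omega)$; the crucial Lipschitz-in-$x_3$ requirement on the coefficients is precisely $\theta'' \in L^\infty$.

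The main obstacle is the passage from local to global $\h^2$-estimates on the unbounded cylinder $\Omega$. This is where the uniform bounds on $\theta'$ and $\theta''$ become essential: they ensure that every local elliptic estimate comes with a constant independent of $x_3$, so that a partition-of-unity argument in $x_3$ assembles the local bounds into the global estimate $\|u\|_{\h^2(\Omega)} \leq C \bigl( \|f\|_{L^2(\Omega)} + \|u\|_{L^2(\Omega)} \bigr)$, completing the proof.
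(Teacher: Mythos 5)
Your plan is correct but takes a genuinely different route from the paper. You transfer to the straight cylinder $\Omega$ and invoke standard elliptic-regularity machinery (Nirenberg difference quotients in $x_3$, slice estimates in $\omega$, a partition of unity to patch local bounds into a global one). The paper instead works directly in the twisted tube $\Omega_\theta$ with the constant-coefficient operator $-\Delta$: following Ladyzhenskaya--Uraltseva, it proves the a priori estimate $\|u\|^2_{\h^2(\Omega_\theta)}\leq c\int_{\Omega_\theta}(|\Delta u|^2+|u|^2)\,dx$ for compactly supported smooth $u$ vanishing on $\partial\Omega_\theta$, via a Reilly-type integration-by-parts identity whose boundary term carries the mean curvature of $\partial\Omega_\theta$ (this is exactly where the uniform bounds on $\theta',\theta''$ enter), and then obtains ${\rm D}({\mathcal H}_\theta)=\h^2(\Omega_\theta)\cap\h_0^1(\Omega_\theta)$ via a density argument plus a self-adjointness trick: a proper symmetric extension of a self-adjoint operator is impossible, so the a priori inclusion cannot be strict. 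That trick replaces your direct integration-by-parts verification of the ``easy'' inclusion. In short, the paper keeps the differential operator simple at the cost of a non-cylindrical domain; you keep the domain simple at the cost of variable coefficients.

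One point in your scheme needs tightening: the transverse and longitudinal steps are not independent, and the order matters. The slice operator on $\omega\times\{x_3\}$ is not the 2D Dirichlet Laplacian but the variable-coefficient elliptic operator $-\Delta_\omega-(\theta'(x_3))^2\partial_\tau^2$, and the associated slice equation has on its right-hand side the terms containing $\partial_3^2 u$ and $\partial_3\partial_\tau u$. These must already be known to lie in $L^2(\Omega)$ before the slice estimate is meaningful, so the difference-quotient step in $x_3$ must come first. Moreover, that step should be carried out on the divergence-form sesquilinear form $Q_{\theta'}$, where the coefficient requiring Lipschitz regularity is $\theta'$ (so $\theta''\in{\rm L}^\infty$ suffices), rather than on the expanded nondivergence operator, where the first-order coefficient $\theta''$ is only bounded and its difference quotient is not controlled. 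With the order reversed and the slice operator correctly identified, the argument goes through and yields the same conclusion as the paper.
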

\noindent Proposition \ref{op-domain} could be considered as a
fairly standard result but since we have not been able to find in the
literature a version suitable for our purposes (most of the
references available treat bounded domains or the complements of
compact sets), we include a detailed sketch of the proof in
Section \ref{sect-domain}.

\noindent Next, we introduce the  operator $U_\theta : {\rm
L}^2(\Omega_\theta) \to
 {\rm L}^2(\Omega)$ generated by the change of variables
    \bel{gr21}
    \Omega
    \ni x \mapsto r_\theta(x_3)\, x \in \Omega_\theta.
    \ee
Namely, for $w \in {\rm L}^2(\Omega_\theta)$ set
$$
(U_\theta \, w)(x) = w \left( r_\theta(x_3)\, x \right), \quad x
\in \Omega.
$$
Evidently, $U_\theta : {\rm L}^2( \Omega_\theta) \to {\rm
L}^2(\Omega)$ is unitary since \eqref{gr21} defines a
diffeomorphism  whose Jacobian is identically equal to one. Now
assume $g \in C(\re; \re) \cap {\rm L}^\infty(\re)$ and introduce
the quadratic form
$$
Q_g[u] = \int_{\Omega} \left(|\nabla_\omega u|^2+|\pd_3 u+ g\,
\pd_\tau u|^2 \right){\rm d}x, \quad u\in {\rm D}(Q_g) =
\h_0^1(\Omega),
$$
where  $\nabla_\omega : = (\pd_1, \pd_2)^T$, and $\pd_\tau : =
x_1\pd_2-x_2\pd_1$. Denote by $H_{g}$  the self-adjoint operator
generated in ${\rm L}^2(\Omega)$ by the closed quadratic form
$Q_g$. The transformation $U_\theta$ also maps
$\h_0^1(\Omega_\theta)$ bijectively onto $\h_0^1(\Omega)$.
Hence, for $g=\theta'$ we get
$$
\mathcal{Q}[w] = Q_{\theta'}[U_\theta\, w], \quad w \in
\h_0^1(\Omega_\theta),
$$
which implies
\bel{gr1}
H_{\theta'} = U_\theta\,  {\mathcal H}_\theta\,
    U_\theta^{-1}.
\ee

\noindent Assume now that $g \in C^1(\re)$ with $g, g' \in {\rm
L}^\infty(\re)$. Set $G(x_3) : = \int_0^{x_3} g(s) ds$, $x_3 \in
\re$. Then $U_G$ maps
     bijectively $\h^2(\Omega_G)$ onto $\h^2(\Omega)$. Therefore, Proposition \ref{op-domain} and the unitarity of
     $U_G: {\rm L}^2(\Omega_G) \to {\rm L}^2(\Omega)$ imply the following
     \begin{follow} \label{fgr1}
     Assume that $\omega \subset \rd$ is a bounded domain with
    boundary $\partial \omega \in C^2$, and $g \in  C^1(\re)$
    with $g, g' \in {\rm L}^\infty(\re)$. Then the domain of the operator
    $H_g$ coincides with $\h^2(\Omega) \cap
    \h_0^1(\Omega)$.
    \end{follow}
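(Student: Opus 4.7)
The plan is to deduce the corollary from Proposition~\ref{op-domain} by exhibiting an antiderivative of $g$ and then transporting operator domains through the unitary equivalence \eqref{gr1}. First I would set $G(x_3) := \int_0^{x_3} g(s)\, ds$. Since $g \in C^1(\re)$ with $g, g' \in \mathrm{L}^\infty(\re)$, we have $G \in C^2(\re)$ with $G' = g$ and $G'' = g'$, both bounded, so Proposition~\ref{op-domain} applies with $\theta = G$ and yields $\mathrm{D}(\mathcal{H}_G) = \h^2(\Omega_G) \cap \h_0^1(\Omega_G)$. Combined with \eqref{gr1}, which in this setting reads $H_g = H_{G'} = U_G\,\mathcal{H}_G\, U_G^{-1}$, and with the unitarity of $U_G\colon \mathrm{L}^2(\Omega_G)\to \mathrm{L}^2(\Omega)$, this immediately gives
\begin{equation}
\mathrm{D}(H_g) \;=\; U_G\bigl(\h^2(\Omega_G) \cap \h_0^1(\Omega_G)\bigr).
\end{equation}

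It then suffices to verify that $U_G$ sends $\h^2(\Omega_G) \cap \h_0^1(\Omega_G)$ bijectively onto $\h^2(\Omega) \cap \h_0^1(\Omega)$. The $\h_0^1$ bijectivity is already recorded before \eqref{gr1}, so what remains is the $\h^2$ part. My plan here is a direct chain-rule computation applied to the diffeomorphism $\Phi(x) := r_G(x_3)\,x$ defined in \eqref{gr21}. Its first-order partial derivatives are combinations of $\cos G$, $\sin G$, $G' = g$ and the coordinates $x_1, x_2$, all uniformly bounded on $\Omega$ because $\omega$ is bounded and $g \in \mathrm{L}^\infty(\re)$; its second-order partials contain in addition terms proportional to $G'' = g'$ and $(G')^2$, again bounded. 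The same uniform bounds hold for $\Phi^{-1}$. Consequently the substitution $u \mapsto u \circ \Phi$ sends $\h^2(\Omega_G)$ bijectively onto $\h^2(\Omega)$, as required.

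The only genuine obstacle is this $\h^2$ mapping property, which reduces to uniformly bounding the Jacobian and Hessian of $\Phi$ on $\Omega$. This is precisely where the hypotheses $g, g' \in \mathrm{L}^\infty(\re)$ and the boundedness of the cross-section $\omega$ enter in an essential way; no input beyond that already used in Proposition~\ref{op-domain} is needed.
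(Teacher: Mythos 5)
Your proposal is correct and follows essentially the same route as the paper: set $G(x_3)=\int_0^{x_3}g$, invoke Proposition~\ref{op-domain} for $\theta=G$, and transport the domain through the unitary $U_G$ using \eqref{gr1}. The paper simply asserts, without detail, that $U_G$ maps $\h^2(\Omega_G)$ bijectively onto $\h^2(\Omega)$; your chain-rule verification of that step (bounded Jacobian and Hessian of $\Phi$ and $\Phi^{-1}$ coming from $g,g'\in\mathrm{L}^\infty$ and the boundedness of $\omega$) is the right justification and fills in exactly what the paper leaves implicit.
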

   \noindent  Furthermore, if $g \in C^1(\re)$ with $g, g' \in {\rm L}^\infty(\re)$ we have
    \bel{gr26}
    H_g\,  u = \left(-\pd_1^2 - \pd_2^2 - (\pd_3 + g\, \pd_\tau)^2\right)u , \quad u \in \h^2(\Omega) \cap
    \h_0^1(\Omega),
    \ee
    since ${\mathcal H}_G\,  \varphi = - \Delta \varphi$, $\varphi \in \h^2(\Omega_G) \cap
    \h_0^1(\Omega_G)$.

\subsection{Existence and completeness of the wave operators.} \label{subsect-wave}
Next we show that under appropriate assumptions on the
difference $g_1 - g_2$, the wave operators for the operator pair
$(H_{g_1}, H_{g_2})$ exist and are complete, and hence the
absolutely continuous spectra of the operators $H_{g_1}$ and
$H_{g_2}$ coincide.
\begin{theorem} \label{t1}
Assume that $\omega \subset \rd$ is a bounded domain with
$C^2$-boundary. Let $g_j \in C^1(\re ; \re)$ with $g_j, g'_j \in
{\rm L}^\infty(\re)$, $j =1,2$. Suppose that there exists $\alpha
> 1$  such that
    \bel{gr31}
    \|\phi_{-\alpha}(g_1 - g_2)\|_{{\rm L}^{\infty}(\re)} < \infty,
    \ee
the function $\phi_\alpha$ being defined in \eqref{ngr1}. Then we have
    \bel{gr30}
 H_{g_1}^{-2} - H_{g_2}^{-2}  \in S_1({\rm L}^2(\Omega)).
    \ee
\end{theorem}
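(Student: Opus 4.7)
The strategy is to decompose $R_1^2-R_2^2$, with $R_j:=H_{g_j}^{-1}$, into a manageable sum using a form-level resolvent identity, and then to show each piece is trace class by a Hilbert--Schmidt factorisation adapted to the waveguide structure. First I would start from the elementary identity
\[
R_1^2-R_2^2 \;=\; R_1(R_1-R_2) + (R_1-R_2)R_2,
\]
and, crucially, express $R_1-R_2$ via the \emph{form-version} of the second resolvent identity rather than the bare operator version. Writing $D_g:=\pd_3+g\,\pd_\tau$ and $f:=g_1-g_2$, a direct computation gives
\[
Q_{g_1}[u,v]-Q_{g_2}[u,v] \;=\; \int_\Omega f\bigl[(D_{g_1}u)\overline{\pd_\tau v}+(\pd_\tau u)\overline{D_{g_2}v}\bigr],
\]
which leads to
\[
R_1-R_2 \;=\; R_1\,\pd_\tau\,f\,D_{g_1}\,R_2 \;+\; R_1\,D_{g_2}\,f\,\pd_\tau\,R_2.
\]
The key point is that $f'=(g_1-g_2)'$ does \emph{not} appear, which is essential because \eqref{gr31} provides no decay of $f'$.

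Substituting into the first identity yields $R_1^2-R_2^2$ as a sum of four operators of the schematic form $R_1^{a}\,B_1\,f\,B_2\,R_2^{b}$ with $a+b=3$ and $B_1,B_2\in\{\pd_\tau,D_{g_1},D_{g_2}\}$. At this point I would exploit the elementary commutations $[-\Delta_\omega,\pd_\tau]=0$ and $[D_g^{2},D_g]=0$, which imply $[H_{g_j},\pd_\tau]=0$ and $[H_{g_j},D_{g_j}]=0$; hence $R_j$ commutes with $\pd_\tau$ and with $D_{g_j}$, which allows me to rearrange resolvent powers freely past those first-order operators.

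Next I would split the weight symmetrically:
\[
f \;=\; f_1\cdot f_2, \qquad f_1:=|f|^{1/2}, \quad f_2:=\mathrm{sgn}(f)|f|^{1/2}.
\]
Since $\alpha>1$ in \eqref{gr31} gives $f\in L^1(\re)$, both $f_1,f_2\in L^2(\re)$; and because $f_1,f_2$ are functions of $x_3$ only they commute with $\pd_\tau$, so each of the four terms can be written in the form $A\cdot B$ where one of $A,B$ carries $f_1$, the other carries $f_2$, without any commutator being created. The Hilbert--Schmidt property of each factor is then verified using the eigenbasis $\{\phi_n\}$ of the Dirichlet Laplacian on $\omega$ together with Fourier transform in $x_3$: the $S_2$-norm reduces to $\|f_i\|_{L^2(\re)}^{2}$ times an explicit series $\sum_n\mu_n^{-\sigma}$, which converges by Weyl's law $\mu_n\sim cn$ (in 2D) as soon as $\sigma>1$. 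Since $S_2\cdot S_2\subset S_1$, each of the four summands lies in $S_1(\mathrm{L}^2(\Omega))$, and so does their sum, proving \eqref{gr30}.

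The main obstacle, and the reason the proof does not reduce to a routine second-resolvent-identity calculation, is the combination of two features: the perturbation $H_{g_1}-H_{g_2}$ is a \emph{second-order} differential operator whose explicit operator form involves $f'$, while the hypothesis only gives decay of $f$ itself; and the distribution of the three resolvent powers across the two sides of $f$ in each summand is asymmetric ($R_1^2\cdots R_2$ or $R_1\cdots R_2^2$), so the Hilbert--Schmidt bookkeeping is tight. Both difficulties are resolved by the form-level identity together with the commutation of $R_j$ with $\pd_\tau$ and $D_{g_j}$, which together allow the four terms to be recast as honest products of two Hilbert--Schmidt operators whose norms are controlled by $\|f\|_{L^1(\re)}$ and a convergent transverse eigenvalue sum.
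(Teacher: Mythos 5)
Your form-level resolvent identity
\[
R_1-R_2 \;=\; R_1\,\pd_\tau\,f\,D_{g_1}\,R_2 \;+\; R_1\,D_{g_2}\,f\,\pd_\tau\,R_2
\]
is correct and, after expanding the paper's perturbation $W$ (which is also written in divergence form, so $f'$ never appears there either), it is equivalent to the paper's starting point. The difficulty lies entirely in the Schatten-class bookkeeping, and there your argument has two genuine gaps.

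First, the claim that $R_j$ commutes with $\pd_\tau$ (and with $D_{g_j}$) is false. The formal commutator $[H_g,\pd_\tau]$ does vanish, but $\pd_\tau=x_1\pd_2-x_2\pd_1$ is not tangent to $\pd\omega$ unless $\omega$ is a disc centred at the origin; hence $\pd_\tau$ does not preserve the Dirichlet boundary condition, so $\pd_\tau u\notin \h^1_0(\Omega)$ for general $u\in {\rm D}(H_g)$ and $[R_g,\pd_\tau]\neq 0$. (Already in one dimension, $\pd_x$ fails to commute with the resolvent of the Dirichlet Laplacian on an interval.) Consequently the ``free rearrangement of resolvent powers past $\pd_\tau$ and $D_{g_j}$'' that underlies your factorisation is not available. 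The paper never commutes resolvents past $\pd_\tau$; it only commutes resolvents past \emph{multiplication operators} $\phi_\alpha(x_3)$, which do preserve the domain, and absorbs the resulting commutators.

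Second, even granting the splitting $f=f_1f_2$ with $f_1,f_2\in L^2(\re)$, the factorisation of, say, $R_1^2\pd_\tau f D_{g_1} R_2$ into $\big(R_1^2\pd_\tau f_1\big)\big(f_2 D_{g_1} R_2\big)$ does not produce two Hilbert--Schmidt operators. For the second factor one has, reducing to $g=0$ and using the transverse eigenbasis,
\[
\|f_2\,\pd_3 H_0^{-1}\|_{S_2}^2 \;=\; (2\pi)^{-1}\,\|f_2\|_{L^2}^2 \sum_{n} \int_\re \frac{\xi^2\,d\xi}{(\xi^2+\mu_n)^2}
\;=\; C\,\|f_2\|_{L^2}^2\sum_{n}\mu_n^{-1/2},
\]
and by Weyl's law $\mu_n\sim cn$, so $\sum_n\mu_n^{-1/2}$ diverges. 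Thus $f_2 D_{g_1} R_2$ lies only in $S_4$ (this is exactly what the paper's Lemma~\ref{lgr3}(ii) provides, and no better), while $R_1^2\pd_\tau f_1$ is in $S_2$; the product is therefore in $S_{4/3}$, not $S_1$. Since $R_1\neq R_2$, the resolvent powers cannot be redistributed across the weight to balance the two sides, and the (incorrect) commutation with $\pd_\tau$ would not help anyway. The paper resolves precisely this obstruction by splitting the weight as $\phi_{\alpha/4}\cdot\phi_{3\alpha/4}$ rather than $|f|^{1/2}\cdot|f|^{1/2}$, placing an $S_4$ factor on the single-resolvent side via Lemma~\ref{lgr3}(ii), and proving through the commutator expansion \eqref{gr41}--\eqref{gr42} that the double-resolvent side $\phi_{3\alpha/4}\pd_\ell H_g^{-2}$ lies in $S_{4/3}$, so that $S_{4/3}\cdot S_4\subset S_1$ closes the argument.
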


\noindent Theorem \ref{t1} is proven in Section
\ref{sect-wave}. By a classical result from the stationary
scattering theory (see the original work \cite{Bi} or
\cite[Corollary 3, Section 3, Chapter XI]{RSIII}, \cite[Chapter 6, Section 2, Theorem 6]{y}), this theorem
implies the following

\begin{follow} \label{fgr2}
Under the assumptions of Theorem \ref{t1} the wave operators
$$
{\rm s}-\lim_{t \to \pm \infty} e^{itH_{g_1}} e^{-itH_{g_2}} P_{\rm ac}(H_{g_2})
$$
for
the operator pair $(H_{g_1}, H_{g_2})$ exist and are complete.
Therefore, the absolutely continuous parts of   $H_{g_1}$ and
$H_{g_2}$ are unitarily equivalent, and, in particular,
    \bel{gr32}
    \sigma_{\rm ac}(H_{g_1}) = \sigma_{\rm ac}(H_{g_2}).
    \ee
\end{follow}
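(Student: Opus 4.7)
The plan is to expand $H_{g_1}^{-2} - H_{g_2}^{-2}$ by the resolvent identity and show that each of the finitely many resulting summands belongs to $S_1({\rm L}^2(\Omega))$. With $R_j := H_{g_j}^{-1}$ and $V := H_{g_1} - H_{g_2}$, the standard identity $R_1 - R_2 = -R_1 V R_2$ gives
\[
H_{g_1}^{-2} - H_{g_2}^{-2} \;=\; -R_1^2\,V\,R_2 \;-\; R_1\,V\,R_2^2.
\]
The crucial structural step is to represent $V$, via the sesquilinear form difference $\mathcal{Q}_{g_1} - \mathcal{Q}_{g_2}$, as
\[
V \;=\; -\partial_\tau\,(g_1-g_2)\,D_1 \;-\; D_2\,(g_1-g_2)\,\partial_\tau, \qquad D_j := \partial_3 + g_j\partial_\tau,
\]
so that $g_1-g_2$ appears as an \emph{undifferentiated} multiplication operator, sandwiched between the first-order skew-adjoint operators $\partial_\tau$, $D_1$, $D_2$. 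This is essential, as hypothesis \eqref{gr31} controls only the pointwise size of $g_1-g_2$ and gives no information on $g_1'-g_2'$.

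Two further ingredients are used. The first is the pair of commutation identities $[\partial_\tau, H_g] = 0$ and $[D_g, H_g] = 0$, both of which follow from $[-\Delta_\omega, \partial_\tau] = 0$ and $[-\Delta_\omega, D_g] = 0$ (using $g = g(x_3)$) combined with the trivial $[D_g, D_g^2] = 0$; as a consequence, $\partial_\tau$, $D_1$ and $D_2$ may be moved freely through the corresponding resolvents $R_j$. The second is the Hilbert--Schmidt lemma: for every $g \in C^1(\re;\re)$ with $g, g' \in {\rm L}^\infty(\re)$ and every $\beta > 1/2$, the operator $\phi_\beta(x_3) H_g^{-1}$ lies in $S_2({\rm L}^2(\Omega))$. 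For $g = 0$ this follows from the explicit resolvent kernel
\[
H_0^{-1}(x,y) \;=\; \sum_{n}\phi_n(x_\omega)\overline{\phi_n(y_\omega)}\,\frac{e^{-\sqrt{E_n}\,|x_3 - y_3|}}{2\sqrt{E_n}},
\]
where $\{(E_n,\phi_n)\}$ are the Dirichlet spectral data of $-\Delta_\omega$ on $\omega$; its Hilbert--Schmidt norm squared reduces to $\|\phi_\beta\|_{{\rm L}^2(\re)}^2 \sum_n (4 E_n^{3/2})^{-1}$, which is finite by the 2D Weyl asymptotic $E_n \sim c\,n$ and by $\phi_\beta \in {\rm L}^2(\re)$ for $\beta > 1/2$. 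The general case is obtained via the resolvent identity $H_g^{-1} = H_0^{-1} - H_0^{-1}(H_g - H_0)H_g^{-1}$ and the boundedness of $(H_g - H_0)H_g^{-1}$.

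Since $\alpha > 1$, I split $\phi_\alpha = \phi_{\alpha_1}\phi_{\alpha_2}$ with $\alpha_1, \alpha_2 > 1/2$, and decompose $g_1 - g_2 = \phi_\alpha\,q$ with $q := \phi_{-\alpha}(g_1-g_2) \in {\rm L}^\infty(\re)$ by \eqref{gr31}. Substituting the form expression for $V$ into the two summands of $H_{g_1}^{-2} - H_{g_2}^{-2}$ and using the commutations to move the first-order operators $\partial_\tau, D_1, D_2$ to the outside of the resolvent powers $R_1^a, R_2^b$ (with $a+b = 3$), one brings each resulting term into the shape
\[
(\text{bounded})\cdot\bigl[R_1^{m_1}\phi_{\alpha_1}\bigr]\cdot q\cdot\bigl[\phi_{\alpha_2}R_2^{m_2}\bigr]\cdot(\text{bounded}),
\]
with $m_1, m_2 \geq 1$, where the bounded prefix and suffix package the remaining first- and second-order differential operators, using the form bounds $\|\partial_\tau H_g^{-1/2}\|, \|D_g H_g^{-1/2}\| \leq C$ and the boundedness of $\partial_\tau D_j H_g^{-1}$ (consequences of elliptic regularity on the tube, Proposition \ref{op-domain}). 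The bracketed factors are Hilbert--Schmidt by the lemma, so H\"older's inequality for Schatten norms, $\|A_1A_2A_3A_4A_5\|_{S_1} \leq \|A_1\|\cdot\|A_2\|_{S_2}\cdot\|A_3\|\cdot\|A_4\|_{S_2}\cdot\|A_5\|$, places each summand in $S_1({\rm L}^2(\Omega))$.

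The main obstacle is this last regrouping. The total resolvent power $a+b = 3$ must be split between two outer Hilbert--Schmidt factors, each of which still has to absorb a first-order operator inherited from $V$. The commutations $[\partial_\tau, H_g] = 0 = [D_g, H_g]$ are precisely what permit this redistribution: they allow the first-order operators from $V$ to be extracted from the middle of the sandwich $R_1^a V R_2^b$ and collected with an adjacent resolvent into a bounded expression such as $\partial_\tau D_j H_g^{-1}$. Without these commutations one would be left with a first-order operator trapped between $\phi_\alpha$ and a single resolvent, which (as can be seen by a direct fiber computation on the unperturbed model) is typically only Hilbert--Schmidt and not trace class.
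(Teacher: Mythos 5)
The proposal is effectively an attempt to re-prove Theorem~\ref{t1} (the trace-class inclusion), rather than Corollary~\ref{fgr2} itself; the latter follows from the former by the Birman invariance-principle version of trace-class scattering theory, which the paper cites explicitly (\cite{Bi}, \cite[Chapter XI]{RSIII}, \cite{y}) and the proposal never mentions. That omission is minor. The serious issue is the commutation claim at the heart of the argument.

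You assert $[\partial_\tau, H_g]=0$ (and $[D_g,H_g]=0$) so that $\partial_\tau$, $D_1$, $D_2$ ``may be moved freely through the resolvents $R_j$''. This is false for a generic cross-section $\omega$. As a \emph{differential} expression $\partial_\tau$ commutes with $-\Delta_\omega$, but $-\Delta_\omega$ here is the Dirichlet Laplacian: $\partial_\tau = x_1\partial_2-x_2\partial_1$ is tangential only to circles centred at the origin, so $\partial_\tau$ does not preserve the Dirichlet boundary condition on $\partial\omega$ (equivalently, $\partial_\tau$ maps $\h^2(\omega)\cap\h^1_0(\omega)$ out of $\h^1_0(\omega)$) unless $\omega$ is rotationally symmetric. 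Consequently $\partial_\tau H_g^{-1}\neq H_g^{-1}\partial_\tau$ in general, and the commutations you use to bring each summand into the form $(\text{bdd})\,[R_1^{m_1}\phi_{\alpha_1}]\,q\,[\phi_{\alpha_2}R_2^{m_2}]\,(\text{bdd})$ are not available. Even if one granted $[\partial_\tau,H_g]=0$ and $[D_g,H_g]=0$, the term $R_1^{\,a}\,\partial_\tau(g_1-g_2)D_1\,R_2^{\,b}$ would require moving $D_1$ through $R_2$, i.e.\ $[D_1,H_{g_2}]=0$; that commutator actually equals an expression involving $g_1'-g_2'$, which hypothesis \eqref{gr31} does not control. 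So the regrouping step fails on two counts.

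The paper sidesteps this entirely. From the explicit second-order $W$ with $\partial_\tau$ and $\partial_3$ factored to the outside, it never commutes a differential operator through a resolvent: each summand of $H_{g_1}^{-2}-H_{g_2}^{-2}$ is written as a product $T_1^*T_2$, with $T_j$ of the form $\phi\,\partial_\ell H_g^{-m}$ and the differential operator always sitting to the \emph{left} of the resolvent, the other side being handled by $\partial_\ell^*=-\partial_\ell$. The only commutations in the paper's proof are of the scalar weight $\phi$ through $H_g^{-1}$, which produce controlled commutator terms in $\phi',\phi''$ (see \eqref{gr41}--\eqref{gr42}). This is also why the paper needs the $S_4$ estimate of Lemma~\ref{lgr3}(ii) for $h\,\partial_\ell H_g^{-1}$ rather than merely the $S_2$ bound you use: the first-order derivatives coming from $W$ cannot be commuted away and must be absorbed into the Schatten class factors themselves, costing the exponent $4$ instead of $2$.
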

\noindent Corollary \ref{fgr2} admits an equivalent formulation in terms of the operator pair $({\mathcal H}_{\theta_1}, {\mathcal H}_{\theta_2})$:

\begin{follow} \label{fgr3}
Assume that $\omega \subset \rd$ is a bounded domain with
$C^2$-boundary. Let $\theta_j \in C^2(\re ; \re)$ with $\theta_j, \theta'_j, \theta''_j \in
{\rm L}^\infty(\re)$, $j =1,2$. Suppose that there exists $\alpha
> 1$  such that
    $$
    \|\phi_{-\alpha}(\theta'_1 - \theta'_2)\|_{{\rm L}^{\infty}(\re)} < \infty,
    $$
Then the wave operators
$$
{\rm s}-\lim_{t \to \pm \infty} e^{it{\mathcal H}_{\theta_1}} {\mathcal J} e^{-it{\mathcal H}_{\theta_2}} P_{\rm ac}({\mathcal H}_{\theta_2}), \quad {\mathcal J} : = U_{\theta_1}^{-1} U_{\theta_2},
$$
for
the operator pair $({\mathcal H}_{\theta_1}, {\mathcal H}_{\theta_2})$ exist and are complete.
Therefore, the absolutely continuous parts of   ${\mathcal H}_{\theta_1}$ and
${\mathcal H}_{\theta_2}$ are unitarily equivalent, and, in particular,
    $\sigma_{\rm ac}({\mathcal H}_{\theta_1}) = \sigma_{\rm ac}({\mathcal H}_{\theta_2})$.
    \end{follow}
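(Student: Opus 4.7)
The plan is to deduce Corollary \ref{fgr3} from Corollary \ref{fgr2} by pulling back along the unitary transformations $U_{\theta_j} : {\rm L}^2(\Omega_{\theta_j}) \to {\rm L}^2(\Omega)$, using the unitary equivalence $H_{\theta'_j} = U_{\theta_j} {\mathcal H}_{\theta_j} U_{\theta_j}^{-1}$ from \eqref{gr1}.

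First, I would set $g_j := \theta'_j$, $j=1,2$. The hypothesis $\theta'_j, \theta''_j \in {\rm L}^\infty(\re)$ immediately yields $g_j, g'_j \in {\rm L}^\infty(\re)$, so Theorem \ref{t1} applies under the decay assumption on $\phi_{-\alpha}(\theta'_1 - \theta'_2)$. By Corollary \ref{fgr2}, the wave operators
\[
W_\pm := {\rm s}-\lim_{t \to \pm\infty} e^{itH_{\theta'_1}} e^{-itH_{\theta'_2}} P_{\rm ac}(H_{\theta'_2})
\]
exist and are complete, and $\sigma_{\rm ac}(H_{\theta'_1}) = \sigma_{\rm ac}(H_{\theta'_2})$.

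Next I would substitute $e^{-itH_{\theta'_j}} = U_{\theta_j} e^{-it{\mathcal H}_{\theta_j}} U_{\theta_j}^{-1}$ into the definition of $W_\pm$ and conjugate by $U_{\theta_1}^{-1}$ on the left and $U_{\theta_2}$ on the right. Using that $U_{\theta_j}$ is unitary, we get
\[
U_{\theta_1}^{-1} W_\pm\, U_{\theta_2} = {\rm s}-\lim_{t \to \pm\infty} e^{it{\mathcal H}_{\theta_1}}\, \bigl( U_{\theta_1}^{-1} U_{\theta_2} \bigr)\, e^{-it{\mathcal H}_{\theta_2}}\, \bigl(U_{\theta_2}^{-1} P_{\rm ac}(H_{\theta'_2}) U_{\theta_2}\bigr).
\]
The unitary equivalence of $H_{\theta'_2}$ and ${\mathcal H}_{\theta_2}$ gives $U_{\theta_2}^{-1} P_{\rm ac}(H_{\theta'_2}) U_{\theta_2} = P_{\rm ac}({\mathcal H}_{\theta_2})$, so the right-hand side is precisely the wave operator with identification ${\mathcal J} = U_{\theta_1}^{-1} U_{\theta_2}$ that appears in the statement. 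Existence of these wave operators is thus a direct consequence of the existence of $W_\pm$.

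For completeness, I would observe that $W_\pm$ is a partial isometry with initial subspace $\mathrm{Ran}\, P_{\rm ac}(H_{\theta'_2})$ and final subspace $\mathrm{Ran}\, P_{\rm ac}(H_{\theta'_1})$; conjugation by the unitaries $U_{\theta_j}$ (together with the identities $U_{\theta_j}^{-1} P_{\rm ac}(H_{\theta'_j}) U_{\theta_j} = P_{\rm ac}({\mathcal H}_{\theta_j})$) transports these initial and final subspaces to $\mathrm{Ran}\, P_{\rm ac}({\mathcal H}_{\theta_2})$ and $\mathrm{Ran}\, P_{\rm ac}({\mathcal H}_{\theta_1})$ respectively, establishing completeness. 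Finally, the intertwining property $W_\pm H_{\theta'_2} = H_{\theta'_1} W_\pm$, combined with $H_{\theta'_j} U_{\theta_j} = U_{\theta_j} {\mathcal H}_{\theta_j}$, yields $(U_{\theta_1}^{-1} W_\pm U_{\theta_2}) {\mathcal H}_{\theta_2} = {\mathcal H}_{\theta_1} (U_{\theta_1}^{-1} W_\pm U_{\theta_2})$, which gives the unitary equivalence of the absolutely continuous parts and, in particular, $\sigma_{\rm ac}({\mathcal H}_{\theta_1}) = \sigma_{\rm ac}({\mathcal H}_{\theta_2})$. There is no real obstacle here: the entire argument is a bookkeeping exercise in unitary conjugation, and the only nontrivial input (the trace-class difference \eqref{gr30}) is already provided by Theorem \ref{t1}.
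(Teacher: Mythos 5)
Your proof is correct and follows precisely the route the paper intends: the paper introduces Corollary \ref{fgr3} as an ``equivalent formulation'' of Corollary \ref{fgr2} via the unitary conjugation $H_{\theta'_j}=U_{\theta_j}\,{\mathcal H}_{\theta_j}\,U_{\theta_j}^{-1}$ from \eqref{gr1}, and offers no separate proof. Your bookkeeping---verifying the hypotheses of Theorem \ref{t1} with $g_j=\theta'_j$, conjugating the wave operators by $U_{\theta_1}^{-1}$ and $U_{\theta_2}$, transporting the a.c.\ subspaces, and chasing the intertwining relation---is exactly the argument the paper leaves implicit.
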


\subsection{Constant twisting} \label{subsect_const-twist}

\noindent In our remaining results, we concentrate on the case of
appropriate perturbations of a constant twisting, i.e. the case where $\theta'$ is equal to a constant $\beta\in\re$.
First, we consider the unperturbed operator $H_\beta$.  We define the
partial Fourier transform $\F$, unitary in ${\rm L}^2(\Omega)$, by
$$
(\F\, u)(x_\omega,k) = (2\pi)^{-1/2}\, \int_{\re} e^{-i k x_3}\,
u(x_\omega,x_3)\, dx_3, \qquad k\in\re, \quad x_\omega \in \omega.
$$
Then we have
\begin{equation} \label{hbeta}
\hat H_\beta = \F\, H_\beta\, \F^*= \int _\re ^\oplus  h_\beta(k)\, dk,
\end{equation}
where, by \eqref{gr26} with $g = \beta$, the operator $h_\beta(k)$
 acts on its domain
${\rm D}\,(h_\beta(k))= \h^2(\omega)\cap\h^1_0(\omega)$ as
$$
h_\beta(k) = - \Delta_\omega  + ( \beta i\partial_{\tau} - k)^2,
$$
$-\Delta_\omega$ being the self-adjoint operator generated in
${\rm L}^2(\omega)$ by the closed quadratic form
$$
\int_\omega |\nabla v|^2 dx_\omega,  \quad v \in \h_0^1(\omega).
$$
 Note that for all $k \in \re$
the resolvent $h_\beta(k)^{-1}$ is  compact,  and $h_\beta(k)$ has
a purely discrete spectrum. Let
\begin{equation} \label{En}
0< E_1(k) \leq  E_2(k) \leq \dots \leq E_n(k) \leq \dots , \qquad k\in\re,
\end{equation}
be the non-decreasing sequence of the eigenvalues of $h_\beta(k)$.
Denote by $p_n(k)$ the orthogonal projection onto ${\rm
Ker}(h_\beta(k) - E_n(k))$, $k \in \re$ and $n \in {\mathbb N}$.
By \cite{BrKoRaSo,ExKo} we have
\begin{equation} \label{spectra}
\sigma(H_\beta) =\sigma_{\rm ac}(H_\beta) = [E_1(0), \infty).
\end{equation}
A detailed discussion of the properties of $E_n(k)$ is given in
Section \ref{sect-const-twist}. It turns out that the functions
$E_n(k)$ are piecewise analytic, and that for any given $k_0\in
\re$, the function $E_n(k)$ can be analytically extended into an
open neighborhood of $k_0$. We denote such an extension by $\tilde
E_{n,k_0}(k)$. If $k_0$ is a point where $E_n(k)$ is analytic,
then of course $\tilde E_{n,k_0}(\cdot)=E_n(\cdot)$. With this
notation at hand, we introduce the following subsets of $\re$:
\begin{align*}
\E_1  & := \big\{ E \in \re  \, : \exists\, n\in\N, \, \exists\,  k_0 \in \re\,
 :  E_n(k_0) =E   \ \wedge \ \partial_k \tilde E_{n,k_0}(k_0) =0 \big\},
\\
\E_2 & := \big\{ E \in \re\, :  \exists\, k_0 \in \re ,\,
\exists\,  n, m \in \N, \, n\neq m \, :  E_n(k_0) = E_m(k_0)  =E \ \wedge\ \\
& \ \qquad \qquad \qquad \wedge\  \partial_k\tilde E_{n,k_0}(k_0)\, \partial_k\tilde E_{m,k_0}(k_0) < 0 \big\}.
\end{align*}
We  then define the set $\E$ of critical levels as follows:
\begin{equation} \label{c-levels}
\E=  \E_1 \cup \E_2.
\end{equation}

\begin{lemma} \label{loc-finite}
The set $\E$ is locally finite.
\end{lemma}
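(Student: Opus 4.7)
My strategy is to fix an arbitrary compact interval $I=[a,b]\subset\re$ and prove $\E\cap I$ is finite, which is equivalent to the local finiteness statement. The starting point is that $h_\beta(\cdot)$ is a self-adjoint analytic family on $\re$ with a common form domain $\h_0^1(\omega)$ and polynomial dependence on $k$, and that each $h_\beta(k)$ has compact resolvent. The Rellich-Kato theorem therefore furnishes a countable family $\{\lambda_j\}_{j\in\N}$ of globally real-analytic functions on $\re$ whose values at each $k$ enumerate the eigenvalues of $h_\beta(k)$ counted with multiplicity. The local analytic extensions $\tilde E_{n,k_0}$ used in the definitions of $\E_1,\E_2$ coincide near $k_0$ with members of this family, so I will work with the equivalent characterizations: $\E_1$ is the set of values $\lambda_j(k_0)$ at which $\lambda_j'(k_0)=0$, and $\E_2$ is the set of values $\lambda_j(k_0)=\lambda_m(k_0)$ attained at pairs of distinct branches with $\lambda_j'(k_0)\lambda_m'(k_0)<0$.

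Next I localize in $k$. Using $|\pd_\tau v|\le C_0\,|\nabla_\omega v|$, valid since $\omega$ is bounded, a Cauchy-Schwarz estimate applied to the cross-term of $\|kv+i\beta\pd_\tau v\|^2$ yields a coercivity bound $h_\beta(k)\ge c\,k^2-C$ for some $c>0$ and $C\in\re$, uniform in $k$. Hence there exists $R>0$ with $E_1(k)>b$ for $|k|>R$, so every point of $\E\cap I$ must come from some $k_0\in[-R,R]$. I then claim that only finitely many branches $\lambda_j$ take values in $I$ somewhere on $[-R,R]$. Suppose for contradiction there are infinitely many; pick distinct $j_l$ and $k_l\in[-R,R]$ with $\lambda_{j_l}(k_l)\le b$. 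By compactness, along a subsequence $k_l\to k_*\in[-R,R]$ and $\lambda_{j_l}(k_l)\to c\in[E_1(0),b]$. Norm-resolvent continuity of $h_\beta(\cdot)$, combined with the discreteness of the spectrum, forces $c$ to be an eigenvalue of $h_\beta(k_*)$, and the local Kato-Rellich structure provides a neighborhood $U$ of $k_*$ on which only finitely many, say $N$, analytic branches pass through a fixed neighborhood of $c$. For $l$ large, $k_l\in U$ and $\lambda_{j_l}(k_l)$ lies in that neighborhood, so $\lambda_{j_l}$ restricted to $U$ coincides with one of these $N$ local branches, hence globally (by the identity principle) with one of $N$ global branches. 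The $j_l$'s thus take at most $N$ distinct values for large $l$, a contradiction.

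The conclusion now follows from analyticity of the remaining finitely many branches. For $\E_1$: for each such $\lambda_j$, either $\lambda_j'\not\equiv 0$ and its zeros on $[-R,R]$ are isolated and hence finite, each producing at most one value in $\E_1$; or $\lambda_j'\equiv 0$ and $\lambda_j$ is constant, contributing a single value. For $\E_2$: for each of the finitely many pairs $(j,m)$ of distinct relevant branches, $\lambda_j-\lambda_m$ is real-analytic and not identically zero (else the two branches would coincide globally and the product of their derivatives would never be negative), so its zeros in $[-R,R]$ are finite, each contributing at most one value to $\E_2$. Summing these finitely many contributions gives $|\E\cap I|<\infty$, as required. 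I anticipate the principal obstacle to be the reduction to finitely many branches on $[-R,R]$, since the argument requires combining the global Rellich parametrization with norm-resolvent continuity and the local Kato-Rellich structure at accumulation points; once this step is in hand, the remainder is a routine application of the identity principle for real-analytic functions.
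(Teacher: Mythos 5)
Your proof is correct and reaches the same conclusion, but the middle step (reducing to finitely many analytic branches on a compact $k$-interval) is argued differently from the paper. The paper proves this via Lemma~\ref{l4}: it derives a \emph{uniform} Lipschitz-type bound $|\partial_k\lambda_\ell(k)|\le 2\sqrt{\lambda_\ell(k)}$ from the Feynman--Hellmann identity (inequalities~\eqref{lambda}--\eqref{lambda'}), uses the intermediate value theorem to find points $k_j\in I_R$ with $\lambda_j(k_j)=R$, passes to an accumulation point $k_\infty$ of $\{k_j\}$, and then the Lipschitz bound forces infinitely many $\lambda_j(k_\infty)$ to cluster near $R$, contradicting discreteness of $\sigma(h_\beta(k_\infty))$. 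You instead extract a convergent subsequence $(k_l,\lambda_{j_l}(k_l))\to(k_*,c)$, invoke norm-resolvent continuity to place $c\in\sigma(h_\beta(k_*))$, and use the local Kato--Rellich picture near $(k_*,c)$ together with the identity theorem to pin $\lambda_{j_l}$ down to one of finitely many global branches. Both routes exploit the same two pillars, coercivity $E_1(k)\gtrsim k^2$ (the paper cites its earlier result~\eqref{bkrs}, you derive a weaker but sufficient version) and discreteness of $\sigma(h_\beta(k_*))$; the paper's quantitative Feynman--Hellmann bound yields slightly more (the existence of $N_R$ in Lemma~\ref{l4}, used later in the Mourre estimates), whereas your soft compactness argument suffices for local finiteness alone. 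One step in your write-up deserves a sentence of justification: the passage from ``$\lambda_{j_l}(k_l)$ lies in a neighborhood of $c$'' to ``$\lambda_{j_l}$ coincides on $U$ with one of the $N$ local branches'' requires noting that, by continuity, $\lambda_{j_l}$ stays in the spectral window near $c$ on an open subinterval of $U$, where it must match one of the $N$ branches on a set with accumulation, after which the identity principle applies.
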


\noindent The proof of Lemma \ref{loc-finite} is given in Section
\ref{sect-const-twist}, immediately after Lemma \ref{l4}.

\subsection{Absence of singular continuous spectrum of $H_{\beta -
\varepsilon}$} \label{subsect-no-sing}

\begin{theorem} \label{main}
Let $\theta'(x_3) =\beta -\eps(x_3)$, where $\eps\in C^1(\re,\re)$  is such that
\begin{equation} \label{decay-eps}
\|\eps\, \phi_{-2}\|_\infty + \|\eps'\, \phi_{-2} \|_\infty <
\infty,
\end{equation}
the function $\phi_\alpha$ being defined in \eqref{ngr1}. Then:
\begin{itemize}
\item[(a)] Any compact subinterval of $\re\setminus\E$ contains at
most finitely many eigenvalues of $H_{\theta'}$, each having
finite multiplicity;

\smallskip

\item[(b)] The point spectrum of $H_{\theta'}$ has no accumulation points in $\re\setminus\E$;

\smallskip

\item[(c)] The singular continuous spectrum of $H_{\theta'}$ is empty.
\end{itemize}
\end{theorem}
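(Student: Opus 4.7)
The plan is to derive Theorem \ref{main} as an application of the abstract Mourre commutator method to the pair $(H_{\theta'}, A)$, with $A$ the conjugate operator described in \eqref{conj}. The classical scheme \cite{Mo, pss, ABG} asserts that once a strict Mourre estimate is established on an open interval $I \subset \re \setminus \E$ and the Hamiltonian satisfies the $C^2(A)$-regularity of \cite{ABG}, one obtains a limiting absorption principle on compact subsets of $I \setminus \sigma_{\rm pp}(H_{\theta'})$, yielding the absence of $\sigma_{\rm sc}$ in $I$ and the non-accumulation with finite multiplicity of eigenvalues in $I$. The proof thus reduces to collecting these two ingredients and then running a covering argument in $\lambda$.

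The first ingredient is Theorem \ref{mourre-estim}: for each $\lambda \in \re \setminus \E$ there exist an open interval $I_\lambda \ni \lambda$, a constant $c_\lambda > 0$, and a compact operator $K_\lambda \in S_\infty({\rm L}^2(\Omega))$ such that
\begin{equation}
E_{I_\lambda}(H_{\theta'})\, [H_{\theta'}, iA]\, E_{I_\lambda}(H_{\theta'}) \geq c_\lambda\, E_{I_\lambda}(H_{\theta'}) + K_\lambda,
\end{equation}
with $E_{I_\lambda}$ the spectral projection of $H_{\theta'}$. This estimate is proved in Section \ref{sect-perturbation}: one starts from the constant-twisting Mourre estimate of Section \ref{sect-mourre} (whose construction of $\gamma \in C_0^\infty(\re;\re)$ is the content of Theorem \ref{thm1}) and then absorbs the $\varepsilon \partial_\tau$-contributions in $H_{\theta'} - H_\beta$ as relatively compact perturbations, using the decay assumption \eqref{decay-eps}.

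The second ingredient is the regularity package summarized in Proposition \ref{conditions}: that $\{e^{itA}\}_{t\in\re}$ is a strongly continuous unitary group preserving ${\rm D}(H_{\theta'})$, and that the commutators $[H_{\theta'}, iA]$ and $[[H_{\theta'}, iA], iA]$, initially defined on a suitable core, extend to bounded operators from ${\rm D}(H_{\theta'})$ to ${\rm L}^2(\Omega)$. On the fibered side, the compactly supported cut-off $\gamma \in C_0^\infty(\re;\re)$ makes $e^{itA}$ act as a smooth diffeomorphism with compact support in the Fourier variable $k$, while the first commutator with $h_\beta(k)$ is essentially multiplication by a smooth $\gamma E_n'$-type expression; the double bracket is then controlled by the same type of reasoning. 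The terms generated by $\varepsilon$ are handled via \eqref{decay-eps}, which controls both $\varepsilon$ and $\varepsilon'$ by the weight $\phi_2$, ensuring that the extra second-order differential piece of $H_{\theta'}$ contributes bounded perturbations at each commutator level.

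Once these two ingredients are in hand, conclusions (a)--(c) follow on each $I_\lambda$ from the standard Mourre theorem as formulated in \cite{ABG}. Taking the union over $\lambda \in \re \setminus \E$ gives parts (a) and (b) throughout $\re \setminus \E$, together with $\sigma_{\rm sc}(H_{\theta'}) \cap (\re \setminus \E) = \emptyset$; since $\E$ is locally finite by Lemma \ref{loc-finite} and singular continuous measures never charge countable sets, this upgrades to $\sigma_{\rm sc}(H_{\theta'}) = \emptyset$, proving (c). The hardest part should be the verification of the $C^2(A)$-regularity: because $H_{\theta'}$ carries the second-order term $(\partial_3 + (\beta - \varepsilon)\partial_\tau)^2$, the bracket $[H_{\theta'}, iA]$ is a first-order differential operator whose coefficients arise by conjugating $\gamma(k)$ through the partial Fourier transform in $x_3$, and iterating the commutator once more while keeping it bounded relative to $H_{\theta'}$ relies decisively on the delicate choice of $\gamma$ from Theorem \ref{thm1} together with the joint decay of $\varepsilon$ and $\varepsilon'$ imposed by \eqref{decay-eps}.
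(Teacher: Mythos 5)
Your proposal matches the paper's own route: establish the Mourre estimate for $[H_{\theta'},iA]$ (Theorem \ref{mourre-estim}), verify the regularity/invariance conditions for the group $e^{itA}$ and the first and second commutators (Proposition \ref{conditions} and Lemma \ref{double-com}), invoke the abstract Mourre theorem to obtain local conclusions, and then cover a compact subinterval of $\re\setminus\E$ finitely and upgrade to global absence of singular continuous spectrum via the local finiteness of $\E$ (Lemma \ref{loc-finite}). One small imprecision worth noting: the paper only establishes the weaker (and sufficient) bounds that $[H_{\theta'},iA]$ maps ${\rm D}(H_{\theta'})$ into ${\rm D}(H_{\theta'}^{1/2})^*$ and that $(H_{\theta'}+1)^{-1}[[H_{\theta'},iA],iA](H_{\theta'}+1)^{-1}$ is bounded on ${\rm L}^2(\Omega)$, rather than the stronger ${\rm D}(H_{\theta'})\to{\rm L}^2(\Omega)$ boundedness you assert for both commutators, and it applies \cite[Theorem 1.2]{pss} rather than the $C^2(A)$-formulation of \cite{ABG}; these are formulation choices, not gaps.
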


\smallskip

\noindent Theorem \ref{main} is proven in Subsection \ref{subsect-sc}.

\begin{remark}
If $\eps\in C^1(\re,\re)$ is such that $\eps'$ is bounded and
$\|\eps\, \phi_{-\alpha} \| _\infty < \infty$ for some $\alpha>1$,
then  Corollary \ref{fgr2} and equation \eqref{spectra} imply
$$
\sigma_{\rm ac}(H_{\theta'}) =\sigma_{\rm ac}(H_{\beta})= [E_1(0), \infty).
$$
Note that in order to prove the absence of singular continuous
spectrum of $H_{\theta'}$ we need stronger hypothesis on $\eps$
and $\eps'$, see equation \eqref{decay-eps}.
\end{remark}

\noindent By \cite[Section XI.3]{RSIII}, Corollary \ref{fgr2} and Theorem \ref{main} part (c) imply

\begin{follow} \label{as-complete}
Under the assumptions of Theorem \ref{main} the wave operators for the
operator pair $(H_\beta, H_{\theta'})$ exist and are
asymptotically complete.
\end{follow}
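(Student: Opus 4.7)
The plan is to combine three ingredients that are already at our disposal: Corollary~\ref{fgr2} (existence and completeness of the wave operators under the $L^\infty$-weighted decay assumption on the difference $g_1-g_2$), Theorem~\ref{main}(c) (emptiness of $\sigma_{\rm sc}(H_{\theta'})$), and the definition of asymptotic completeness as given in \cite[Section XI.3]{RSIII}.

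First, I would apply Corollary~\ref{fgr2} with $g_1 := \beta$ and $g_2 := \theta' = \beta-\eps$, so that $g_1 - g_2 = \eps$. Both $g_j$ lie in $C^1(\re;\re)$ with $g_j, g_j' \in L^\infty(\re)$: for $g_1$ this is trivial, and for $g_2 = \beta - \eps$ this follows from \eqref{decay-eps} together with $\eps \in C^1(\re;\re)$, since \eqref{decay-eps} implies in particular $\eps, \eps' \in L^\infty(\re)$ (the weight $\phi_{-2}$ is bounded below on compact sets and $\eps, \eps'$ are continuous, hence bounded where $\phi_{-2}$ might grow). Next, the decay hypothesis \eqref{gr31} must be verified for some $\alpha > 1$: taking $\alpha = 2$, one has
\[
\|\phi_{-\alpha}(g_1 - g_2)\|_{L^\infty(\re)} \;=\; \|\phi_{-2}\,\eps\|_{L^\infty(\re)} \;<\; \infty,
\]
which is precisely the first bound in \eqref{decay-eps}. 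Consequently, Corollary~\ref{fgr2} guarantees that the wave operators
\[
W_\pm(H_\beta, H_{\theta'}) \;=\; \mathrm{s}\text{-}\lim_{t\to\pm\infty} e^{itH_\beta}\, e^{-itH_{\theta'}}\, P_{\rm ac}(H_{\theta'})
\]
exist and are complete, i.e. $\mathrm{Ran}\,W_\pm(H_\beta, H_{\theta'}) = P_{\rm ac}(H_\beta)\,L^2(\Omega)$, and the absolutely continuous parts of $H_\beta$ and $H_{\theta'}$ are unitarily equivalent.

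To upgrade completeness to asymptotic completeness in the sense of \cite[Section XI.3]{RSIII}, one needs in addition that the singular continuous subspace of $H_{\theta'}$ be trivial, so that $L^2(\Omega)$ decomposes as the orthogonal direct sum of $\mathrm{Ran}\,W_\pm(H_\beta, H_{\theta'})$ and the closure of the span of the eigenvectors of $H_{\theta'}$. This is exactly the content of Theorem~\ref{main}(c). Combining it with the existence and completeness obtained in the previous step yields the claimed asymptotic completeness.

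There is no genuine obstacle here: the statement is a direct synthesis of Corollary~\ref{fgr2} and Theorem~\ref{main}(c). The only point worth checking carefully is that the hypotheses of Corollary~\ref{fgr2} are indeed implied by those of Theorem~\ref{main} — which, as outlined above, is immediate from \eqref{decay-eps} with the choice $\alpha = 2$.
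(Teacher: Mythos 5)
Your proof is correct and follows essentially the same route as the paper: the paper's own argument is the one-line observation that the corollary follows from Corollary~\ref{fgr2}, Theorem~\ref{main}(c), and \cite[Section XI.3]{RSIII}. Your additional verification that the hypotheses of Corollary~\ref{fgr2} hold under \eqref{decay-eps} (with $\alpha=2$, noting $\phi_{-2}\geq 1$ so $\eps,\eps'\in L^\infty$) is a correct filling-in of a step the paper leaves implicit.
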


\section{Proof of Proposition \ref{op-domain}} \label{sect-domain}
\noindent Denote by $C_0^{\infty}(\overline{\Omega_\theta})$ the
class of functions $u \in C^{\infty}(\overline{\Omega_\theta})$,
compactly supported in $\overline{\Omega_\theta}$. Set
$$
\dcot : = \left\{u \in C_0^{\infty}(\overline{\Omega_\theta}) \, |
\, u_{|\partial \Omega_\theta} = 0\right\}.
$$
\begin{lemma} \label{lgr1}
  Under the assumptions of Proposition \ref{op-domain} there
exists a constant $c \in (0,\infty)$ such that
    \bel{gr2}
    \|u\|_{\h^2(\Omega_\theta)}^2 \leq c\int_{\Omega_\theta}
    (|\Delta u|^2 + |u|^2) dx
    \ee
    for any $u \in \dcot$.
    \end{lemma}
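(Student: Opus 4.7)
The plan is to reduce inequality \eqref{gr2} to an $\h^2$-elliptic regularity bound for the operator $H_{\theta'}$ on the straight tube $\Omega$, using the unitary $U_\theta$ introduced in Subsection \ref{subsect-domain}. The change of variables $\Phi(x) := r_\theta(x_3) x$ is a $C^2$-diffeomorphism from $\Omega$ onto $\Omega_\theta$ with Jacobian identically one and with derivatives up to order two uniformly bounded on $\Omega$, since $\theta', \theta'' \in {\rm L}^\infty(\re)$ and $\omega$ is bounded. Setting $w := U_\theta u$, one obtains the norm equivalence $\|w\|_{\h^2(\Omega)} \simeq \|u\|_{\h^2(\Omega_\theta)}$, while \eqref{gr1} and \eqref{gr26} together with the unitarity of $U_\theta$ yield $\|\Delta u\|_{{\rm L}^2(\Omega_\theta)} = \|H_{\theta'} w\|_{{\rm L}^2(\Omega)}$. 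Hence \eqref{gr2} is equivalent to
\begin{equation*}
\|w\|_{\h^2(\Omega)}^2 \leq c' \bigl( \|H_{\theta'} w\|_{{\rm L}^2(\Omega)}^2 + \|w\|_{{\rm L}^2(\Omega)}^2 \bigr),
\end{equation*}
for compactly supported $w \in C^2(\overline{\Omega})$ vanishing on $\partial\omega \times \re$.

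To prove this, I would combine a partition of unity in the longitudinal variable with a uniform local elliptic estimate. Pick $\chi \in C_0^\infty((-1,1))$ with $\sum_{n \in \Z} \chi(\cdot - n)^2 \equiv 1$, and set $\chi_n(x_3) := \chi(x_3 - n)$, $w_n := \chi_n w$. Each $w_n$ is supported in the bounded cylinder $\Omega_n := \omega \times (n-1, n+1)$, vanishes on $\partial\omega \times (n-1, n+1)$, and has compact support away from the top and bottom faces; hence $w_n \in \h^2(D_n) \cap \h_0^1(D_n)$ on any bounded $C^2$-subdomain $D_n$ of $\omega \times \re$ containing $\supp(w_n)$. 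By \eqref{gr26} the principal part of $H_{\theta'}$ equals $-(\partial_1^2+\partial_2^2) - (\partial_3 + \theta'(x_3)\partial_\tau)^2$, which is uniformly strongly elliptic, and all its coefficients are $C^1$ with norms bounded independently of $n$, thanks to $\theta', \theta'' \in {\rm L}^\infty(\re)$ and the boundedness of $\omega$. The classical Agmon--Douglis--Nirenberg regularity then yields
\begin{equation*}
\|w_n\|_{\h^2(D_n)}^2 \leq C \bigl( \|H_{\theta'} w_n\|_{{\rm L}^2(D_n)}^2 + \|w_n\|_{{\rm L}^2(D_n)}^2 \bigr),
\end{equation*}
with a constant $C$ independent of $n$, after transplanting $D_n$ to a fixed reference cylinder by an $x_3$-translation.

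It remains to sum these local estimates. Using $\sum_n \chi_n^2 \equiv 1$ together with the bounded overlap of the supports, and developing $D^2 w_n$ and $H_{\theta'} w_n$ via the Leibniz rule, the commutator corrections $[D^2, \chi_n] w$ and $[H_{\theta'}, \chi_n] w$ appear; each is a first-order differential operator in $w$ with uniformly bounded coefficients supported in $\{\chi_n' \neq 0\}$. Summation in $n$ reduces the problem to the interpolation-type bound
\begin{equation*}
\|\nabla w\|_{{\rm L}^2(\Omega)}^2 \leq C \bigl( \|H_{\theta'} w\|_{{\rm L}^2(\Omega)}^2 + \|w\|_{{\rm L}^2(\Omega)}^2 \bigr),
\end{equation*}
which follows from the coercivity $\|\nabla w\|^2 \leq C\, Q_{\theta'}[w]$, obtained by writing $\partial_3 w = (\partial_3 + \theta' \partial_\tau) w - \theta' \partial_\tau w$ and absorbing $\|\theta' \partial_\tau w\|^2 \leq C \|\nabla_\omega w\|^2$ (exploiting $\theta' \in {\rm L}^\infty(\re)$ and the bounded coefficients of $\partial_\tau$ on $\omega$), combined with $Q_{\theta'}[w] = (w, H_{\theta'} w) \leq \tfrac12 \|w\|^2 + \tfrac12 \|H_{\theta'} w\|^2$.

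The main obstacle is ensuring that the constant in the Agmon--Douglis--Nirenberg estimate on $D_n$ can be chosen independently of $n$; this is precisely what the hypotheses $\theta', \theta'' \in {\rm L}^\infty(\re)$ buy us, by guaranteeing uniform $C^1$-bounds on the coefficients of $H_{\theta'}$ and allowing each bounded section to be reduced to a single reference estimate by a translation in $x_3$.
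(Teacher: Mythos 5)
Your proposal is correct, but it takes a genuinely different route from the paper. The paper works directly in the twisted domain $\Omega_\theta$, following \cite[Chapter 3, Lemma 8.1]{lu}: it starts from the integral identity
\begin{equation*}
\int_{\Omega_\theta}\bigl(|\Delta u|^2 + c_0|u|^2\bigr)\,dx
= \int_{\Omega_\theta}\Bigl(\sum_{j,k}|\partial_j\partial_k u|^2 + c_0|u|^2\Bigr)\,dx
+ 2\int_{\partial\Omega_\theta} K\Bigl|\frac{\partial u}{\partial\nu}\Bigr|^2\,dS,
\end{equation*}
valid for $u\in\dcot$, where $K$ is the mean curvature of $\partial\Omega_\theta$; it then controls the boundary term by a trace inequality $\int_{\partial\Omega_\theta}|v|^2\,dS \leq \int_{\Omega_\theta}(\varepsilon|\nabla_\omega v|^2 + c(\varepsilon)|v|^2)\,dx$ (obtained fiberwise over $x_3$, using the bounded cross-sections) and closes with the elementary interpolation $\int|\nabla u|^2 \leq \frac12\int(|\Delta u|^2 + |u|^2)$. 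Your proof instead conjugates to the straight tube by $U_\theta$, covers $\Omega$ by uniformly overlapping bounded cylinders, invokes a uniform Agmon--Douglis--Nirenberg estimate on translates of a single reference domain (this is exactly where $\theta',\theta''\in{\rm L}^\infty$ and the boundedness of $\omega$ enter, to give uniform ellipticity and uniform $C^1$ bounds on the coefficients), and closes the commutator bookkeeping with the form-coercivity bound $\|\nabla w\|^2\leq C\,Q_{\theta'}[w]\leq C(\|H_{\theta'}w\|^2+\|w\|^2)$. Both methods use the same hypotheses and essentially the same interpolation step at the end; the paper's identity avoids any appeal to local elliptic regularity theory and keeps the argument self-contained, whereas your route is more modular and readily generalizes to other non-compact boundary geometries where a partition-of-unity and a uniform local estimate are available.

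One small presentational caveat: you cite \eqref{gr26} for the differential expression of $H_{\theta'}$ and for $\|\Delta u\|_{{\rm L}^2(\Omega_\theta)} = \|H_{\theta'}w\|_{{\rm L}^2(\Omega)}$, but \eqref{gr26} in the paper is itself derived from Proposition~\ref{op-domain}, which rests on this lemma --- so as written there is a formal circularity. It is easy to repair: for $u\in\dcot$ the identity ${\mathcal H}_\theta u = -\Delta u$ follows by integration by parts in the quadratic form (no boundary term, since $u$ vanishes on $\partial\Omega_\theta$ and $\Delta u \in {\rm L}^2$), and the explicit differential expression of $H_{\theta'}$ on smooth compactly supported functions of $\Omega$ likewise follows directly from the definition of $Q_{\theta'}$ by integration by parts. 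With those replacements the argument is self-contained and non-circular.
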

    \begin{proof}
    Our argument will follow closely the proof of  \cite[Chapter 3, Lemma 8.1]{lu}.
    We have
    \bel{gr7}
    \int_{\Omega_\theta}
    (|\Delta u|^2 + c_0 |u|^2) dx = \int_{\Omega_\theta}\left(\sum_{j,k
    = 1}^3 |\partial_j \partial_k u|^2 + c_0 |u|^2\right) dx +  2\int_{\partial
    \Omega_\theta}K \left|\frac{\partial u}{\partial \nu}\right|^2
    dS, \quad u \in \dcot,
    \ee
     (see \cite{lu} or \cite[Chapter 5, Section 5, Problem 6]{tay}) where $c_0 \in (0,\infty)$ is an arbitrary constant which is
    to be specified later, $K$ is the mean curvature, and $\nu$ is
    the exterior normal unit vector at $\partial
    \Omega_\theta$. Our assumptions on $\partial \omega$ and
    $\theta$ imply  that  for any $u \in
    \dcot$  we have
    \bel{gr5}
     2\int_{\partial
    \Omega_\theta}K \left|\frac{\partial u}{\partial \nu}\right|^2
    dS \geq - c_1 \int_{\partial
    \Omega_\theta} |\nabla u|^2 dS.
    \ee
    with
    $$
    c_1 : = 2 \sup_{x \in \partial \Omega_\theta} |K(x)| \leq 2 \sup_{(x_\omega, x_3) \in \partial \omega \times \re} \left\{(|\theta''(x_3)|
    + \theta'(x_3)^2) |x_\omega| + (1 + \theta'(x_3)^2 |x_\omega|^2) |\kappa(x_\omega)|\right\},
    $$
    where $\kappa(x_\omega)$ is the curvature of $\partial \omega$ at the point $x_\omega \in \partial \omega$.
    Let us check that for any $\varepsilon > 0$ there
    exists a constant $c_2(\varepsilon)$ such that for any $v \in
    C^{\infty}_0(\overline{\Omega_\theta})$ we have
    \bel{gr2a}
    \int_{\partial \Omega_\theta} |v|^2 dS \leq \int_{
    \Omega_\theta} \left(\varepsilon |\nabla_\omega v|^2 + c_2(\varepsilon) |v|^2\right)
    dx
    \ee
    where, as above, $\nabla_\omega : = (\partial_1, \partial_2)^T$.
    In order to prove this, we note the  inequality
    \bel{gr4}
    \int_{\partial \Omega_\theta} |v|^2 dS \leq c_3 \int_\re
    \left(\int_{\partial \omega_{\theta(x_3)}} |v|^2 ds\right) dx_3
    \ee
    where
     $$
    c_3 : = \sup_{(x_\omega, x_3) \in \partial \omega \times \re} \left(1 +
    \theta'(x_3)^2 |x_\omega|^2\right)^{1/2},
    $$
    and $\omega_{\theta(a)}$ is the cross-section of
    $\Omega_\theta$ with the plane $\{x_3 = a\}$, $a \in \re$. \\
    Next, since $\omega$ is a bounded domain with sufficiently regular boundary, we find
    that for any $\varepsilon > 0$ there exists a constant
    $c_4(\varepsilon)$ such that for any $x_3 \in \re$ and any $w
    \in C^{\infty}(\overline{\omega_{\theta(x_3)}})$ we have
    \bel{gr3}
    \int_{\partial \omega_{\theta(x_3)}} |w|^2 ds \leq
    \int_{\omega_{\theta(x_3)}} \left(\varepsilon |\nabla w|^2 + c_4(\varepsilon)|w|^2\right)
    dx_\omega
    \ee
   (see e.g.  \cite[Chapter 2, Eq. (2.25)]{lu}).  Choosing $w = v(\cdot, x_3)$ in \eqref{gr3},
   integrating with respect to $x_3$, and bearing in mind
    \eqref{gr4}, we get
    $$
    \int_{\partial \Omega_\theta} |v|^2 dS \leq \int_{
    \Omega_\theta}\left(c_3 \varepsilon |\nabla_\omega v|^2 + c_3
    c_4(\varepsilon)|v|^2\right) dx
    $$
     which implies \eqref{gr2a} with $c_2(\varepsilon) = c_3
    c_4(\varepsilon /c_3)$. Now the combination of \eqref{gr5}
    and \eqref{gr2a} implies
    \bel{gr8}
     2\int_{\partial
    \Omega_\theta}K \left|\frac{\partial u}{\partial \nu}\right|^2
    dS \geq - c_1 \int_{\Omega_\theta} \left(\varepsilon
    \sum_{j,k=1}^3|\partial_j \partial_k u|^2  + c_2(\epsilon)
    |\nabla u|^2\right) dx.
    \ee
    Further, we have
    \bel{gr9}
    \int_{\Omega_\theta} |\nabla u|^2 dx = - {\rm Re}\,\int_{\Omega_\theta} \Delta u \overline{u}
    dx \leq \frac{1}{2}\int_{\Omega_\theta}\left(|\Delta u|^2 +
    |u|^2\right) dx.
    \ee
    Combining \eqref{gr7}, \eqref{gr8}, and \eqref{gr9}, we find
    that for any $\varepsilon > 0$ we have
    $$
    \int_{\Omega_\theta}\left(\left(1+c_1 c_2(\varepsilon)/2\right) |\Delta u|^2 +
    c_0 |u|^2\right) dx \geq
    $$
    $$
    \int_{\Omega_\theta}\left(\left(1- c_1 \varepsilon\right)
    \sum_{j,k=1}^3|\partial_j \partial_k u|^2 +
    \left(c_0 - c_1 c_2(\varepsilon)/2\right) |u|^2\right)dx
    $$
    which yields \eqref{gr2} under appropriate choice of $c_0, c$
    and $\varepsilon$.
    \end{proof}
   \noindent  Denote by $\tilde{\rm H}^2(\Omega_\theta)$ the Hilbert space $\left\{u \in \h_0^1(\Omega_\theta) \, | \, \Delta u \in {\rm L}^2(\Omega_\theta)\right\}$
    with  scalar product generated by the quadratic form $\int_{\Omega_\theta}
    (|\Delta u|^2 + |u|^2) dx$. \\

    \begin{lemma} \label{lgr2}
    Under the assumptions of Proposition \ref{op-domain} we have $u \in \tilde{\rm H}^2(\Omega_\theta)$
    if and only if $u \in \h^2(\Omega_\theta) \cap \h_0^1(\Omega_\theta)$.
    \end{lemma}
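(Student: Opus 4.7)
The direction $\h^2(\Omega_\theta) \cap \h_0^1(\Omega_\theta) \subset \tilde{\h}^2(\Omega_\theta)$ is immediate, since every element of $\h^2(\Omega_\theta)$ has Laplacian in $L^2(\Omega_\theta)$. For the converse my plan is to leverage Lemma \ref{lgr1} by density: once $\dcot$ is shown to be dense in $\tilde{\h}^2(\Omega_\theta)$ in the $\tilde{\h}^2$-norm, any approximating sequence $\{u_n\} \subset \dcot$ of a given $u \in \tilde{\h}^2(\Omega_\theta)$ is automatically Cauchy in $\h^2(\Omega_\theta)$ by \eqref{gr2} applied to $u_n-u_m$. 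Completeness of $\h^2(\Omega_\theta)$, combined with $u_n \in \h_0^1(\Omega_\theta)$ and $u_n \to u$ in $L^2(\Omega_\theta)$, will then force $u \in \h^2(\Omega_\theta) \cap \h_0^1(\Omega_\theta)$.

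To obtain this density I proceed in two steps. First, I truncate $u$ in the longitudinal direction: picking $\chi \in C_0^\infty(\re)$ equal to $1$ on $[-1,1]$, putting $\chi_R(x_3) := \chi(x_3/R)$ and setting $u_R := \chi_R u$, the identity
$$
\Delta u_R = \chi_R\, \Delta u + 2\,\chi_R'\,\partial_3 u + \chi_R''\, u
$$
together with the bounds $\|\chi_R'\|_\infty \leq C/R$, $\|\chi_R''\|_\infty \leq C/R^2$ and dominated convergence shows that $u_R \to u$ in $\tilde{\h}^2(\Omega_\theta)$. It is therefore enough to approximate a function $u \in \tilde{\h}^2(\Omega_\theta)$ with support compact in $x_3$ by elements of $\dcot$. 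Since such a support sits inside a bounded region of $\Omega_\theta$ whose lateral boundary is $C^2$, I intend to carry this out by the classical construction: use a partition of unity associated with a finite cover of a neighbourhood of the support by boundary charts, straighten the lateral boundary locally via $C^2$ diffeomorphisms, mollify in the half-space picture after a small inward normal translation in order to preserve the vanishing trace, and glue the pieces back together.

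The main obstacle is exactly this last regularization near the lateral boundary while keeping the Dirichlet condition intact; the cutoff step, the application of Lemma \ref{lgr1}, and the completeness argument are then routine, and combining all three pieces yields the sought equality $\tilde{\h}^2(\Omega_\theta) = \h^2(\Omega_\theta) \cap \h_0^1(\Omega_\theta)$.
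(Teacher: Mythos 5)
Your proposal shares the paper's overall skeleton: use the a priori estimate of Lemma \ref{lgr1} to show that Cauchy sequences in the $\tilde{\h}^2$-norm are automatically Cauchy in the $\h^2$-norm, and then reduce everything to the density of $\dcot$ in $\tilde{\h}^2(\Omega_\theta)$. Where the two diverge is in how that density is obtained. The paper's own argument is much terser: it invokes the (standard) fact that $\dcot$ is dense in $\h^2(\Omega_\theta)\cap\h_0^1(\Omega_\theta)$ with respect to the $\h^2$-norm and then asserts, via the elementary bound \eqref{aug1}, that $\dcot$ is also dense in $\tilde{\h}^2(\Omega_\theta)$. Note that this last step is not a purely formal consequence of \eqref{aug1}: density in $\h^2\cap\h_0^1$ for a stronger norm only yields that $\overline{\dcot}^{\tilde{\h}^2}$ contains $\h^2\cap\h_0^1$, and one still has to know that $\h^2\cap\h_0^1$ exhausts $\tilde{\h}^2$ — which is essentially the statement of the lemma. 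Your direct route (first truncate in $x_3$, then run the classical boundary straightening/partition-of-unity/mollification argument inside a bounded piece of the tube) avoids that circularity and is arguably the cleaner way to make the density claim precise. The longitudinal cutoff step is carried out correctly: the commutator identity and the bounds $\|\chi_R'\|_\infty\lesssim R^{-1}$, $\|\chi_R''\|_\infty\lesssim R^{-2}$ together with $\partial_3 u\in{\rm L}^2$ indeed give $u_R\to u$ in $\tilde{\h}^2$.

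The one thing worth flagging is that the boundary step you defer is genuinely the whole technical content, and it is not quite as routine as the words ``straighten, shift inward, mollify'' suggest. After the $C^2$ boundary-straightening, the Laplacian transforms into a second-order elliptic operator $L$ with $C^1$ leading coefficients and merely continuous first-order coefficients; to show $L(\eta_\varepsilon * w_\tau)\to Lw$ in ${\rm L}^2$ one cannot simply commute the mollifier through $L$, because $w_\tau$ is only known to be in $\h^1$, not $\h^2$ — which is exactly what one is trying to prove. The standard way through is either a Friedrichs-type commutator estimate applied after rewriting $a^{ij}\partial_i\partial_j$ in divergence form, or the difference-quotient argument for tangential derivatives combined with the equation for the normal ones (as in the reference \cite{lu} that the paper itself leans on for Lemma \ref{lgr1}). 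Either works, but it is precisely the step one should not declare ``routine'' before checking that the regularity of the coefficients produced by the $C^2$ chart is enough. With that caveat, the plan is sound and proves the same statement by a marginally more explicit version of the paper's argument.
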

    \begin{proof}
    By
    \bel{aug1}
    \int_{\Omega_\theta}
    (|\Delta u|^2 + |u|^2) dx \leq 3 \|u\|^2_{\h^2(\Omega_\theta)}, \quad u \in \h^2(\Omega_\theta),
    \ee
    and \eqref{gr2}, we have
    \bel{gr25}
    \int_{\Omega_\theta}(|\Delta u|^2 + |u|^2) dx \asymp \|u\|^2_{\h^2(\Omega_\theta)}, \quad u \in \dcot.
    \ee
    Evidently, the class $\dcot$ is dense in $\h^2(\Omega_\theta) \cap
    \h_0^1(\Omega_\theta)$. Then \eqref{aug1} easily implies that $\dcot$
    is  dense in $\tilde{\rm H}^2(\Omega_\theta)$ as well. Now the claim of the lemma follows from
    \eqref{gr25}.
    \end{proof}
    \begin{proof}[Proof of Proposition \ref{op-domain}]
    Let $L$ be the operator $-\Delta$ with domain
    $C_0^\infty(\Omega_\theta)$, and $L^*$ be the adjoint of $L$ in
    ${\rm L}^2(\Omega_\theta)$. If $v \in {\rm D}(L^*)$, then a
    standard  argument from the theory of distributions
    over  $C_0^\infty(\Omega_\theta)$ shows that $L^*v = -\Delta v
    \in {\rm L}^2(\Omega_\theta)$. Since ${\mathcal H}_\theta$ is a
    restriction of $L^*$, we find that $u \in {\rm D}\,({\mathcal
    H}_\theta)$ implies that ${\mathcal H}_\theta u = -\Delta u
    \in {\rm L}^2(\Omega_\theta)$. On the other hand, $u \in {\rm D}\,({\mathcal
    H}_\theta)$ implies $u \in {\rm D}\,({\mathcal
    Q}_\theta) = \h_0^1(\Omega_\theta)$. By Lemma \ref{lgr2} we
    have $u \in \h^2(\Omega_\theta) \cap
    \h_0^1(\Omega_\theta)$, i.e.
    \bel{gr19}
    {\rm D}\,({\mathcal H}_\theta) \subseteq \h^2(\Omega_\theta) \cap
    \h_0^1(\Omega_\theta).
    \ee
    If we now suppose that
    \bel{gr20}
    {\rm D}\,({\mathcal H}_\theta) \neq \h^2(\Omega_\theta) \cap
    \h_0^1(\Omega_\theta),
    \ee
    then \eqref{gr19} and \eqref{gr20} would imply that the operator ${\mathcal H}_\theta$ has a
    proper symmetric extension, namely the operator $-\Delta$ with
    domain $\h^2(\Omega_\theta) \cap
    \h_0^1(\Omega_\theta)$, which contradicts the
    self-adjointness of ${\mathcal H}_\theta$. Therefore,
    \eqref{gr18} holds true, and the proof of Proposition \ref{op-domain} is complete.  \end{proof}

\section{Proof of Theorem \ref{t1}}\label{sect-wave}
\noindent For the proof of Theorem \ref{t1} we need an auxiliary
result, Lemma \ref{lgr3}, preceded by some necessary notation.

\noindent  Let $\{\mu_j\}_{j \in {\mathbb N}}$ be the the
non-decreasing sequence of the eigenvalues of the operator
$-\Delta_\omega$. Since $H_g \geq \mu_1 \id_{\Omega}$, and $\mu_1
>0$, the operator $H_g$ is invertible.\\

    \begin{lemma} \label{lgr3}
Let $g \in C^1(\re ; \re)$ with $g, g' \in {\rm L}^\infty(\re)$.\\
(i) Assume $f \in {\rm L}^2(\re)$. Then we have
    \bel{gr33}
f(x_3) H_g^{-1} \in S_2({\rm L}^2(\Omega)).
    \ee
    (ii) Assume $h \in {\rm L}^4(\re)$. Then we have
    \bel{gr34}
h(x_3) \partial_j H_g^{-1} \in S_4({\rm L}^2(\Omega)), \quad j=1,2,3.
    \ee
\end{lemma}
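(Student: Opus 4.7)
The plan is to reduce both claims to the case $g = 0$, in which $H_0 = -\Delta$ is the Dirichlet Laplacian on the straight tube $\Omega = \omega \times \re$. By Corollary~\ref{fgr1} we have $\mathrm{D}(H_g) = \h^2(\Omega) \cap \h_0^1(\Omega) = \mathrm{D}(H_0)$, and on this common domain the graph norm of $H_g$ is equivalent to the $\h^2(\Omega)$-norm by the open mapping theorem, so that the operator $H_0 H_g^{-1}$ is bounded on $L^2(\Omega)$. In view of the factorizations
\[
f H_g^{-1} = (f H_0^{-1})(H_0 H_g^{-1}), \qquad h \pd_j H_g^{-1} = (h \pd_j H_0^{-1})(H_0 H_g^{-1}),
\]
and the ideal property $S_p \cdot \mathcal{L}(L^2(\Omega)) \subset S_p$, it therefore suffices to establish \eqref{gr33} and \eqref{gr34} for $g = 0$.

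For \eqref{gr33} with $g = 0$, I would compute the Hilbert-Schmidt norm of $f H_0^{-1}$ directly from the integral kernel of $H_0^{-1}$. If $\{\varphi_n\}_{n \in \N}$ is an orthonormal basis of real eigenfunctions of $-\Delta_\omega$ in $L^2(\omega)$ with eigenvalues $\{\mu_n\}$, separation of variables combined with the partial Fourier transform in $x_3$ gives
\[
G_0\bigl((x_\omega, x_3),(y_\omega, y_3)\bigr) = \sum_{n \in \N} \varphi_n(x_\omega)\,\varphi_n(y_\omega)\,\frac{1}{2\sqrt{\mu_n}}\, e^{-\sqrt{\mu_n}\,|x_3 - y_3|}.
\]
Squaring this kernel, using orthonormality of $\{\varphi_n\}$ to carry out the integrals over $x_\omega, y_\omega$, and then integrating in $y_3$, yields
\[
\|f H_0^{-1}\|_{S_2}^2 = \|f\|_{L^2(\re)}^2 \sum_{n \in \N}\frac{1}{4\,\mu_n^{3/2}},
\]
and this series converges by Weyl's law for the 2D Dirichlet Laplacian on $\omega$, which gives $\mu_n \asymp n$ as $n \to \infty$.

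For \eqref{gr34} with $g = 0$, the form bound $\|\pd_j u\|_{L^2(\Omega)}^2 \leq Q_0[u]$ for $u \in \h_0^1(\Omega)$ shows that $\pd_j H_0^{-1/2}$ extends to a contraction on $L^2(\Omega)$, and the commutativity of $\pd_j$ with $H_0$ gives $\pd_j H_0^{-1} = H_0^{-1/2}\,\pd_j H_0^{-1/2}$, so that
\[
h\,\pd_j H_0^{-1} = (h H_0^{-1/2})\,(\pd_j H_0^{-1/2}).
\]
By Hölder's inequality for Schatten ideals it then suffices to verify $h H_0^{-1/2} \in S_4(L^2(\Omega))$ when $h \in L^4(\re)$, which follows from the chain
\[
\|h H_0^{-1/2}\|_{S_4}^4 = \|h H_0^{-1}\bar h\|_{S_2}^2 = \tr\bigl((|h|^2 H_0^{-1})^2\bigr) \leq \bigl\||h|^2 H_0^{-1}\bigr\|_{S_2}^2,
\]
where the last inequality is Cauchy-Schwarz for Hilbert-Schmidt operators, combined with part (i) applied to $|h|^2 \in L^2(\re)$. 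The main technical point requiring care is the commutativity $[\pd_j, H_0^{-1/2}] = 0$ for the transverse derivatives $j = 1, 2$, since these do not preserve the Dirichlet condition on $\pd\omega \times \re$ and hence $\mathrm{D}(H_0)$ is not invariant under $\pd_j$; I plan to handle this via the tensor decomposition $H_0 = (-\Delta_\omega) \otimes \id_\re + \id_\omega \otimes (-\pd_3^2)$ on $L^2(\omega) \otimes L^2(\re)$, in which the operators $\pd_j$ and $H_0$ commute at the level of the Borel functional calculus.
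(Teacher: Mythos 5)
Your reduction to $g=0$ and your computation for part (i) via the explicit Green kernel are both correct, and your trick $\|h H_0^{-1/2}\|_{S_4}^4 = \tr\bigl((|h|^2 H_0^{-1})^2\bigr) \le \||h|^2 H_0^{-1}\|_{S_2}^2$ (Weyl's inequality plus part (i) applied to $|h|^2 \in {\rm L}^2$) is a valid and rather slick way to handle the right-hand factor once the factorization is secured. The genuine gap is in the factorization itself. For $j=1,2$ the identity $\pd_j H_0^{-1} = H_0^{-1/2}\,\pd_j\, H_0^{-1/2}$ requires $[\pd_j, H_0^{-1/2}]=0$, and your proposed justification — that $\pd_j$ and $H_0 = (-\Delta_\omega)\otimes\id_\re + \id_\omega\otimes(-\pd_3^2)$ commute in the Borel functional calculus — is false. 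As a differential expression $\pd_j$ formally commutes with $\Delta_\omega$, but the Dirichlet Laplacian $-\Delta_\omega$ on the bounded cross-section $\omega$ does \emph{not} commute with $\pd_j$ as a self-adjoint operator: if $\varphi_n$ is an eigenfunction of $-\Delta_\omega$, then $\pd_j\varphi_n$ fails the Dirichlet condition and is not an eigenfunction, so the spectral projections of $-\Delta_\omega$ do not commute with $\pd_j$. (Already on $\omega=(0,1)$ one has $\pd_1\sin(n\pi x)=n\pi\cos(n\pi x)$.) Consequently $H_0^{-1/2}\,\pd_j\,H_0^{-1/2}\neq \pd_j H_0^{-1}$ for $j=1,2$, and the factorization as written does not hold; this is precisely the difficulty you flag, and the proposed resolution does not resolve it.

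The standard fix — and the one the paper uses — is to pull $\pd_j$ all the way to the \emph{left} and pair it not with $H_0^{-1/2}$ but with $(-\Delta_\omega\otimes\id_\re)^{-1/2}$:
\begin{equation*}
h\,\pd_j H_0^{-1} \;=\; \bigl(\pd_j\,((-\Delta_\omega)\otimes\id_\re)^{-1/2}\bigr)\cdot\bigl(h\,((-\Delta_\omega)\otimes\id_\re)^{1/2}\,H_0^{-1}\bigr), \qquad j=1,2.
\end{equation*}
Here the first factor is a contraction by the form inequality $\|\pd_j v\|^2\le\|(-\Delta_\omega)^{1/2}v\|^2$, no commutation is needed (one uses instead that the multiplier $h(x_3)$ commutes with $(-\Delta_\omega)^{1/2}\otimes\id_\re$), and the $S_4$ membership of $h\,((-\Delta_\omega)\otimes\id_\re)^{1/2}\,H_0^{-1}$ can then be established either by your reduction to $S_2$ or by the interpolation estimate the paper invokes. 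Your argument is fine as written only for $j=3$, where $\pd_3$ genuinely commutes with the fibered $H_0$.
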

\begin{proof}
By Corollary \ref{fgr1} the operator $H_0 H_g^{-1}$ is bounded, so
that it suffices to prove \eqref{gr33} -- \eqref{gr34} for $g=0$.
Evidently,
\begin{align} \label{gr35}
& \|f H_0^{-1}\|^2_{S_2({\rm L}^2(\Omega))} = \sum_{j \in {\mathbb N}} \|f (-\pd_3^2 + \mu_j)^{-1}\|^2_{S_2({\rm L}^2(\re))} = \nonumber \\
& =(2\pi)^{-1} \sum_{j \in {\mathbb N}}\int_\re |f(s)|^2
ds \int_\re \frac{d\xi}{(\xi^2 + \mu_j)^2} = (2\pi)^{-1} \sum_{j
\in {\mathbb N}}\mu_j^{-3/2} \int_\re |f(s)|^2 ds \int_\re
\frac{d\xi}{(\xi^2 + 1)^2}.
\end{align}
Set ${\mathcal N}(\lambda) : =
\#\{j\in {\mathbb N}\, | \, \mu_j < \lambda\}$, $\lambda > 0$. By
the celebrated Weyl law, we have ${\mathcal N}(\lambda) =
\frac{|\omega|}{4\pi} \lambda(1 + o(1))$ as $\lambda \to \infty$
where $|\omega|$ is the area of $\omega$ (see the original work
\cite{w} or \cite[Theorem XIII.78]{RSIV}). Therefore, the series
$\sum_{j \in {\mathbb N}}\mu_j^{-\gamma} = \gamma \int_{\mu_1}^{\infty}
\lambda^{-\gamma - 1} {\mathcal N}(\lambda) d\lambda$ is
convergent if and only if $\gamma > 1$. In particular,
    \bel{gr36}
    \sum_{j \in {\mathbb N}}\mu_j^{-3/2} < \infty,
    \ee
    so that the r.h.s. of \eqref{gr35} is finite which implies \eqref{gr33} with $g=0$.\\
    Let us now prove \eqref{gr34} with $g=0$ and $j=1,2$.  We have
    $$
    h \partial_j H_0^{-1} =
    \pd_j ((-\Delta_\omega) \otimes \id_\re)^{-1/2}h((-\Delta_\omega) \otimes \id_\re)^{1/2}H_0^{-1}.
    $$
    Since the operators $\pd_j ((-\Delta_\omega) \otimes \id_\re)^{-1/2}$, $j=1,2$, are bounded,
    it suffices to show that
    \bel{gr37}
    h((-\Delta_\omega) \otimes \id_\re)^{1/2}H_0^{-1} \in S_4({\rm L}^2(\Omega)).
    \ee
    Applying a standard interpolation result (see e.g. \cite[Theorem 4.1]{sim} or \cite[Section 4.4]{BeLo}), and bearing in mind \eqref{gr36}, we get
    $$
\|h((-\Delta_\omega) \otimes \id_\re)^{1/2}H_0^{-1}\|^4_{S_4({\rm L}^2(\Omega))} = \sum_{j \in {\mathbb N}} \mu_j^2 \|h (-\pd_3^2 + \mu_j)^{-1}\|^4_{S_4({\rm L}^2(\re))} \leq
$$
$$
\leq (2\pi)^{-1} \sum_{j \in {\mathbb N}} \mu_j^2 \int_\re |h(s)|^4 ds \int_\re \frac{d\xi}{(\xi^2 + \mu_j)^4} =
(2\pi)^{-1} \sum_{j \in {\mathbb N}}\mu_j^{-3/2} \int_\re |h(s)|^4 ds \int_\re \frac{d\xi}{(\xi^2 + 1)^4} < \infty
$$
    which implies \eqref{gr37}. Finally, we prove \eqref{gr34} with $g=0$ and $j=3$. To this end it suffices to apply
     again \cite[Theorem 4.1]{sim} and \eqref{gr36}, and get
    $$
\|h \pd_3 H_0^{-1}\|^4_{S_4({\rm L}^2(\Omega))} = \sum_{j \in {\mathbb N}}  \|h \pd_3(-\pd_3^2 + \mu_j)^{-1}\|^4_{S_4({\rm L}^2(\re))} \leq
$$
$$
\leq (2\pi)^{-1} \sum_{j \in {\mathbb N}}  \int_\re |h(s)|^4 ds \int_\re \frac{\xi^4\,d\xi}{(\xi^2 + \mu_j)^4}  =
(2\pi)^{-1} \sum_{j \in {\mathbb N}}\mu_j^{-3/2} \int_\re |h(s)|^4 ds \int_\re \frac{\xi^4\, d\xi}{(\xi^2 + 1)^4} < \infty.
$$
\end{proof}

\begin{proof}[Proof of Theorem \ref{t1}]
For $z \in \rho(H_{g_1}) \cap  \rho(H_{g_2})$  we have
$$
(H_{g_1}- z)^{-2} - (H_{g_2}- z)^{-2}=  \frac
{\partial}{\partial z} (H_{g_1}- z)^{-1}
W(H_{g_2}- z)^{-1}=
$$
\begin{equation} \label{gr39}
(H_{g_1}- z)^{-2}W (H_{g_2}- z)^{-1}+
(H_{g_1}- z)^{-1}W (H_{g_2}-  z)^{-2}
\end{equation}
with
$$
W : = \pd_\tau(g_1^2-g_2^2)\pd_\tau +   \pd_3 (g_1 - g_2)\pd_\tau + \pd_\tau (g_1 - g_2)\pd_3.
$$
Choosing $z = 0$, we obtain
$$
 H_{g_1}^{-2} - H_{g_2}^{-2} =
 $$
 $$
 -\left(\phi_{3\alpha/4} \partial_\tau H_{g_1}^{-2}\right)^* \left((g_1 - g_2)
 \phi_{-\alpha} \phi_{\alpha/4} \partial_3 H_{g_2}^{-1} +
 (g_1^2 - g^2_2)\phi_{-\alpha} \phi_{\alpha/4} \partial_\tau H_{g_2}^{-1}\right)
 -
 $$
 $$
 \left(\phi_{3\alpha/4} \partial_3 H_{g_1}^{-2}\right)^*
 (g_1 - g_2)\phi_{-\alpha} \phi_{\alpha/4} \partial_\tau H_{g_2}^{-1}
 -
$$
$$
 \left(\phi_{\alpha/4} \partial_\tau H_{g_1}^{-1}\right)^*
 \left((g_1 - g_2) \phi_{-\alpha} \phi_{3\alpha/4} \partial_3 H_{g_2}^{-2} +
 (g_1^2 - g^2_2)\phi_{-\alpha} \phi_{3\alpha/4} \partial_\tau H_{g_2}^{-2}\right)
 -
 $$
 $$
 \left(\phi_{\alpha/4} \partial_3 H_{g_1}^{-1}\right)^*
 (g_1 - g_2) \phi_{-\alpha} \phi_{3\alpha/4} \partial_\tau H_{g_2}^{-2}.
 $$
 Since the multipliers by $(g_1 - g_2)\phi_{-\alpha}$ and $(g_1^2 - g^2_2)\phi_{-\alpha}$ are bounded operators by \eqref{gr31} and $g_j \in {\rm L}^\infty(\re)$, $j=1,2$, while
 $$
 \phi_{\alpha/4} \partial_\ell H_{g_j}^{-1} \in S_4({\rm L}^2(\Omega)), \quad \ell = \tau, 3, \quad j=1,2,
 $$
  by Lemma \ref{lgr3} (ii), it suffices to show that
  \bel{gr40}
 \phi_{3\alpha/4} \partial_\ell H_{g_j}^{-2} \in S_{4/3}({\rm L}^2(\Omega)), \quad \ell = \tau, 3,  \quad j=1,2.
 \ee
 In what follows we write $g$ instead of $g_j$, $j=1,2$. Commuting   multipliers by functions $\phi$ which depend only on $x_3$ and belong to appropriate H\"ormander classes, with the resolvent $H_g^{-1}$, and bearing in mind that
 $$
 [\phi, H_g^{-1}] = -H_g^{-1}(\phi'' + 2\phi'(\pd_3 + g \pd_\tau))H_g^{-1},
 $$
 we obtain
$$
\phi_{3\alpha/4} \partial_\tau H_{g}^{-2}
= \phi_{\alpha/4} \partial_\tau H_g^{-1} \phi_{\alpha/2} H_g^{-1} -
\phi_{\alpha/4} \partial_\tau H_g^{-1} \phi_{\alpha/2}'' H_g^{-2} + 2 \phi_{\alpha/4} \partial_\tau H_g^{-1} (\phi'_{\alpha/2})^2 \phi_{-\alpha/2} H_g^{-2}-
$$
\bel{gr41}
2 \phi_{\alpha/4} \partial_\tau H_g^{-1}\phi'_{\alpha/2} \phi_{-\alpha/2} \left(\partial_3 + g \partial_\tau\right) H_g^{-1} \left(\phi_{\alpha/2} H_g^{-1} - 2
\left(\phi_{\alpha/2}'' + {\phi'_{\alpha/2}}\left( \partial_3 + g \partial_\tau\right)\right)H_g^{-2}\right),
\ee
$$
\phi_{3\alpha/4} \partial_3 H_{g}^{-2}
= \phi_{\alpha/4} \partial_3 H_g^{-1} \phi_{\alpha/2} H_g^{-1} -
\phi_{\alpha/4} \partial_3 H_g^{-1} \phi_{\alpha/2}'' H_g^{-2} + 2 \phi_{\alpha/4} \partial_3 H_g^{-1} (\phi'_{\alpha/2})^2 \phi_{-\alpha/2} H_g^{-2}
$$
$$
-2 \phi_{\alpha/4} \partial_3 H_g^{-1}\phi'_{\alpha/2} \phi_{-\alpha/2} \left(\partial_3 + g \partial_\tau\right) H_g^{-1} \left(\phi_{\alpha/2} H_g^{-1} - 2
\left(\phi_{\alpha/2}'' + {\phi'_{\alpha/2}} \left( \partial_3 + g \partial_\tau\right)\right)H_g^{-2}\right)
$$
\bel{gr42}
+\phi'_{\alpha/2} \phi_{-\alpha/4}H_g^{-1}\left(
\phi_{\alpha/2} H_g^{-1} - \left(\phi_{\alpha/2}'' +
2\left(\pd_3 + g\pd_\tau\right)\right) H_g^{-2}\right).
    \ee
Bearing in mind that $S_p \subset S_q$ if $p<q$, and that
$H_g^{-1}$ is a bounded operator, we find that Lemma \ref{lgr3}
implies that all the terms at the r.h.s. of \eqref{gr41} and
\eqref{gr42} can be presented either as a product of an operator
in $S_2$ and an operator in $S_4$, or as a product of three
operators in $S_4$, which yields \eqref{gr40}, and the proof of Theorem \ref{t1} is complete.
\end{proof}

\section{Kato theory for a constant twisting }
\label{sect-const-twist}

\noindent In this section we assume that $\theta' = \beta$ is
constant. Then by \eqref{hbeta} the operator $H_\beta$ is
unitarily equivalent to $ \int _\re ^\oplus  h_\beta(k)\, dk$ with
$h_\beta(k) = -\Delta_\omega + (i\beta\partial_\tau - k)^2$, $k
\in \re$. The goal of the section is to establish various
properties of the fiber operator $h_\beta(k)$, which will be used
later in Section \ref{sect-mourre} for the Mourre estimates
involving the commutator $[H_\beta, iA]$ with a suitable conjugate
operator $A$ described in Section \ref{sect-conjugate}.

\begin{lemma} \label{type A}
The operators $h_\beta(k)$, $k \in \re$, with common domain $\h^2(\omega)\cap\h^1_0(\omega)$, form a self-adjoint holomorphic family of type (A) in the sense of Kato.
\end{lemma}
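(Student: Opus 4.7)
The plan is to verify the three defining properties of a self-adjoint holomorphic family of type A on $\re$ in the Kato sense: a common, $k$-independent operator domain $D$; entire $k$-dependence of $h_\beta(k)u$ for each $u \in D$; and closedness of $h_\beta(k)$ for every $k \in \C$, together with self-adjointness for every $k \in \re$. I would begin by expanding
$$
h_\beta(k) = -\Delta_\omega - \beta^2 \partial_\tau^2 + 2ik\beta\,\partial_\tau + k^2\, \id_\omega,
$$
which is a polynomial in $k$ of degree at most two whose coefficients are differential operators of order at most two. Since $\omega$ is bounded, the coordinate functions $x_1, x_2$ entering $\partial_\tau = x_1\partial_2 - x_2\partial_1$ are bounded on $\omega$, so for any $u \in D := \h^2(\omega) \cap \h^1_0(\omega)$ both $\partial_\tau u$ and $\partial_\tau^2 u$ lie in $L^2(\omega)$. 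Hence $D$ is manifestly $k$-independent and $k \mapsto h_\beta(k)u$ is an entire $L^2(\omega)$-valued function of $k \in \C$.

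The key step is to treat the base point $k=0$. The operator $h_\beta(0) = -\Delta_\omega - \beta^2 \partial_\tau^2$ is formally symmetric with principal symbol
$$
|\xi|^2 + \beta^2 (x_1\xi_2 - x_2\xi_1)^2 \geq |\xi|^2,
$$
so it is uniformly strongly elliptic on $\omega$ with smooth coefficients. Combined with $\partial\omega \in C^2$, this allows me to invoke classical second-order elliptic regularity up to the boundary (in the spirit of \cite[Chapter 2]{lu}, parallel to the argument of Section \ref{sect-domain} used to prove Proposition \ref{op-domain}) to identify the self-adjoint Dirichlet realization of $h_\beta(0)$ with domain precisely $D$, and to obtain the a priori estimate $\|u\|_{\h^2(\omega)} \leq C(\|h_\beta(0)u\| + \|u\|)$ for $u \in D$.

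For arbitrary $k \in \C$ I would then view $h_\beta(k) = h_\beta(0) + B(k)$ as a perturbation of $h_\beta(0)$ by $B(k) := 2ik\beta\,\partial_\tau + k^2$. Combining the elliptic estimate above with the standard interpolation inequality $\|v\|_{\h^1(\omega)} \leq \delta\, \|v\|_{\h^2(\omega)} + C_\delta \|v\|_{L^2(\omega)}$ shows that $B(k)$ is $h_\beta(0)$-bounded with relative bound $0$ for every $k \in \C$. The Kato--Rellich theorem then delivers closedness of $h_\beta(k)$ on $D$ for every $k \in \C$ and self-adjointness for $k \in \re$, since in that case $B(k)$ is symmetric: indeed $\partial_\tau$ corresponds to the divergence-free real vector field $(-x_2, x_1)$, so $\partial_\tau^* = -\partial_\tau$ on $\h^1_0(\omega)$ and thus $2ik\beta\,\partial_\tau$ is symmetric.

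The main obstacle is the base-point step, namely the identification $D(h_\beta(0)) = \h^2(\omega) \cap \h^1_0(\omega)$ and the accompanying $\h^2$ a priori estimate. This is genuine second-order elliptic regularity ``up to the boundary'' and exploits $\partial\omega \in C^2$; it cannot be replaced by a naive Kato--Rellich argument treating $-\beta^2 \partial_\tau^2$ as a perturbation of $-\Delta_\omega$, because the two operators have the same order and the required relative bound $<1$ is not available for large $|\beta|$. Once this regularity input is secured, the polynomial structure of $h_\beta(k)$ and the routine first-order Kato--Rellich perturbation argument complete the proof.
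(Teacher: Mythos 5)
Your proof is correct and reaches the same conclusion by the same overall scheme (polynomial dependence on $k$, common domain, relative bound zero, Kato's type~A criterion), but the way you establish the relative bound differs from the paper's argument in a way worth noting. You invoke the full second-order elliptic a priori estimate $\|u\|_{\h^2(\omega)} \leq C(\|h_\beta(0)u\|_2 + \|u\|_2)$ together with the interpolation inequality $\|v\|_{\h^1} \leq \delta\|v\|_{\h^2} + C_\delta\|v\|_2$ to control $B(k) = 2ik\beta\partial_\tau + k^2$. The paper avoids any a priori estimate for this step and argues purely at the level of the quadratic form: since $(u, h_\beta(0)u) = \|\nabla_\omega u\|_2^2 + \|\beta i\partial_\tau u\|_2^2 \geq \|\beta i\partial_\tau u\|_2^2$, the Cauchy--Schwarz and Young inequalities give directly
$$
\|\beta i\partial_\tau u\|_2^2 \leq (u, h_\beta(0)u) \leq \|u\|_2\,\|h_\beta(0)u\|_2 \leq \eps^{-1}\|u\|_2^2 + \eps\|h_\beta(0)u\|_2^2,
$$
so $\beta i\partial_\tau$ is $h_\beta(0)$-bounded with relative bound $0$ without ever mentioning elliptic regularity. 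This is cleaner and exploits the special algebraic fact that the first-order perturbation is exactly (a factor times) the operator whose square already sits inside $h_\beta(0)$; your route is more generic but does the same job. Where the paper is actually thinner is exactly the point you flag: it simply asserts that $h_\beta(0) = -\Delta_\omega - \beta^2\partial_\tau^2$ is self-adjoint on $\h^2(\omega)\cap\h^1_0(\omega)$, leaving the boundary elliptic regularity to the reader, whereas you state the uniform strong ellipticity $|\xi|^2 + \beta^2(x_1\xi_2 - x_2\xi_1)^2 \geq |\xi|^2$ and observe (correctly) that a naive Kato--Rellich treatment of $-\beta^2\partial_\tau^2$ relative to $-\Delta_\omega$ would fail for large $|\beta|$. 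So your write-up is slightly heavier in the perturbation step but more explicit about the genuinely nontrivial input at $k=0$.
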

\begin{proof}
Note that
$$
h_\beta(k) = h_\beta(0) - 2\, {\rm Re}\, \beta\, i \, k  \partial_\tau +k^2,
$$
and that $h_\beta(0)$ is self-adjoint on $\h^2(\omega)\cap\h^1_0(\omega)$.  Let $u\in \h^1_0(\omega)$. Then for any $\eps>0$ we have
$$
\|\beta\, i\partial_\tau\, u\|_2^2 \leq (u, h_\beta(0)\, u)_{{\rm
L}^2(\omega)} \leq \|u\|_2\, \|h_\beta(0)\, u\|_2 \leq \eps^{-1}
\|u\|_2^2+\eps\,  \|h_\beta(0)\, u\|_2^2.
$$
Hence $\beta i\partial_\tau$ is relatively bounded with respect to
$h_\beta(0)$ with relative bound zero and the assertion follows
from \cite[Theorem VII.2.6]{K}.
\end{proof}

\noindent  From Lemma \ref{type A} and the Rellich Theorem,  \cite[Theorem VII.3.9]{K}, it
follows that all the eigenvalues of $h_\beta(k)$ can be represented by a family of functions
\begin{equation} \label{anal-vect}
\{ \lambda_\ell(k)\}_{\ell\in \el}, \qquad \el \subset \N, \quad k\in\re,
\end{equation}
which are analytic on $\re$. Each eigenvalue $\lambda_\ell(k)$ has a finite multiplicity which is constant in $k\in\re$. Moreover, if $\ell\neq \ell'$, then $\lambda_\ell(k) = \lambda_{\ell'}(k)$ may hold only on a discrete subset of $\re$.

\begin{lemma}
Let $\lambda_\ell(k)$ be one of the analytic eigenvalues \eqref{anal-vect}. Let $k_0\in \re$ be given. Then
\begin{equation}  \label{lambda}
\big | \sqrt{\lambda_\ell(k)}- \sqrt{\lambda_\ell(k_0)}\, \big | \, \leq \,  |k-k_0|, \qquad  \ k\in\re.
\end{equation}
\end{lemma}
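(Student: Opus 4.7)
The proof rests on a Hellmann--Feynman computation combined with the fact that the ground level of $-\Delta_\omega$ keeps $\lambda_\ell(k)$ uniformly away from zero. I would proceed as follows.

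First, since $(i\beta\partial_\tau + k)^2$ is a non-negative self-adjoint operator on $\h^1_0(\omega)$, one has $h_\beta(k) \geq -\Delta_\omega \geq \mu_1\, \id_\omega$, where $\mu_1 > 0$ denotes the lowest Dirichlet eigenvalue of $-\Delta_\omega$ on $\omega$. Hence $\lambda_\ell(k) \geq \mu_1 > 0$ for all $k \in \re$, so that the function $k \mapsto \sqrt{\lambda_\ell(k)}$ is real-analytic on the whole line, and in particular it makes sense to integrate its derivative.

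Next, Lemma \ref{type A} together with the Rellich theorem provides, in a neighborhood of any $k_0 \in \re$, an analytic family $u_\ell(k) \in \h^2(\omega) \cap \h^1_0(\omega)$ of normalized eigenvectors, $h_\beta(k)\, u_\ell(k) = \lambda_\ell(k)\, u_\ell(k)$, $\|u_\ell(k)\|_{{\rm L}^2(\omega)} = 1$ (if the eigenvalue has higher multiplicity one just selects any element of an analytic orthonormal basis of the corresponding eigenspace). Since $(u_\ell(k), \partial_k u_\ell(k)) + (\partial_k u_\ell(k), u_\ell(k)) = 0$ and $\partial_k h_\beta(k) = 2(i\beta\partial_\tau + k)$ is symmetric on the common domain, differentiating $\lambda_\ell(k) = (u_\ell(k), h_\beta(k)\, u_\ell(k))$ yields the Hellmann--Feynman identity
$$
\partial_k \lambda_\ell(k) \;=\; \bigl( u_\ell(k),\, \partial_k h_\beta(k)\, u_\ell(k) \bigr) \;=\; 2\,\bigl( u_\ell(k),\, (i\beta\partial_\tau + k)\, u_\ell(k) \bigr),
$$
which is real.

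Applying Cauchy--Schwarz and using
$$
\lambda_\ell(k) \;=\; \|(-\Delta_\omega)^{1/2} u_\ell(k)\|_{{\rm L}^2(\omega)}^2 + \|(i\beta\partial_\tau + k)\, u_\ell(k)\|_{{\rm L}^2(\omega)}^2 \;\geq\; \|(i\beta\partial_\tau + k)\, u_\ell(k)\|_{{\rm L}^2(\omega)}^2,
$$
I then bound $|\partial_k \lambda_\ell(k)| \leq 2\, \|(i\beta\partial_\tau + k) u_\ell(k)\| \leq 2\sqrt{\lambda_\ell(k)}$. Therefore
$$
\bigl| \partial_k \sqrt{\lambda_\ell(k)}\, \bigr| \;=\; \frac{|\partial_k \lambda_\ell(k)|}{2\sqrt{\lambda_\ell(k)}} \;\leq\; 1,
$$
and integrating this pointwise bound over the interval with endpoints $k_0$ and $k$ gives \eqref{lambda}. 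The only delicate point is the justification of the Hellmann--Feynman step, which however is standard for a self-adjoint holomorphic family of type A (Lemma \ref{type A}) since it guarantees the required analyticity of the eigenprojections and hence of the eigenvectors; everything else is Cauchy--Schwarz and the fundamental theorem of calculus, and I do not anticipate a serious obstacle.
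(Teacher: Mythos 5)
Your proof is correct and follows essentially the same route as the paper: Rellich's theorem for analytic normalized eigenvectors, the Hellmann--Feynman identity $\partial_k\lambda_\ell = 2(\psi_\ell,(i\beta\partial_\tau+k)\psi_\ell)$, Cauchy--Schwarz together with $\|(i\beta\partial_\tau+k)\psi_\ell\|^2\le(\psi_\ell,h_\beta(k)\psi_\ell)=\lambda_\ell$, and integration of the resulting bound $|\partial_k\lambda_\ell|\le 2\sqrt{\lambda_\ell}$. The only addition you make is the explicit remark that $\lambda_\ell(k)\ge\mu_1>0$ (so $\sqrt{\lambda_\ell}$ is smooth), a detail the paper leaves implicit but which is a harmless and sensible clarification.
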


\begin{proof}
By \cite[Theorem VII.3.9]{K} there exists an analytic normalized
eigenvector $\psi_\ell(k)$ associated to $\lambda_\ell(k)$. From
the Feynman-Hellmann formula, see e.g. \cite[Section VII.3.4]{K},
we obtain
\begin{align*}
| \partial_k\lambda_\ell(k)|^2 & = 4\, | ((k-i\beta \partial_\tau)\,  \psi_\ell(k), \, \psi_\ell(k))_{{\rm L}^2(\omega)}|^2 \leq 4\, \|(k-i\beta \partial_\tau ) \, \psi_\ell(k)\|_{{\rm L}^2(\omega)}^2 \\
& \leq 4\, (\psi_\ell(k), h_\beta(k)\,  \psi_\ell(k))_{{\rm L}^2(\omega)} = 4\, \lambda_\ell(k),    \qquad  \ k\in\re.
\end{align*}
Hence
\begin{equation} \label{lambda'}
|\partial_k\lambda_\ell(k)| \, \leq \, 2\, \sqrt{\lambda_\ell(k)} \qquad  k\in\re.
\end{equation}
By integrating this differential inequality we arrive at \eqref{lambda}.
\end{proof}

\begin{remark}
The eigenvalues $E_n(k)$ given in \eqref{En} might be degenerate. For example if $ \beta =0$ and if the operator $-\Delta_\omega$ has a degenerate eigenvalue $
\mu_n=\mu_m= \mu $, then  $E_n (k) = E_m(k)= \mu^2 + k^2, \; \forall k \in \re$.

\noindent On the other hand, since every $E_n(k)$ coincides with one of the functions $\lambda_\ell(k)$ locally on intervals between the crossing points of $\{ \lambda_\ell(k)\}_{\ell}$, its multiplicity on these intervals is constant.
\end{remark}

\noindent Let us define the set
\begin{align*}
\E_c  := &  \{ E \in \re\, : \, \exists\, k \in \re ,\,
\exists\, \ell,\ell'\in\el,\ \ell\neq \ell' \, :  \lambda_\ell(k) = \lambda_{\ell'}(k)  =E \} \\
& \cup \{E\in \re\, :\exists\, k\in\re,\, \exists\, \ell\in\el\, :\,  \lambda_\ell(k)=E \ \wedge \ \partial_k \lambda_\ell(k) =0\}.
\end{align*}

\begin{lemma} \label{l4}
Let $R\in\re$. Then the set $(-\infty,R] \cap \E_c$ is finite. Moreover, there exists an $N_R \in \mathbb N$ such that for all $n > N_R$ and all $k \in \mathbb R$ we have $E_n(k) > R$.
\end{lemma}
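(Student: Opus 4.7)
The plan is to handle the second assertion by a direct min-max comparison, and to reduce the first assertion to a compactness argument on $K:=\{k\in\re:E_1(k)\leq R\}$. For the second assertion, note that for any $u\in\h^1_0(\omega)$ the quadratic form $(u,h_\beta(k)u) = \|\nabla u\|^2 + \|(i\beta\partial_\tau+k)u\|^2$ dominates $\|\nabla u\|^2$, so $h_\beta(k)\geq -\Delta_\omega$ as forms. The min-max principle then gives $E_n(k)\geq\mu_n$ for all $n\in\N$ and $k\in\re$, and setting $N_R:=\#\{n:\mu_n\leq R\}$ (finite by Weyl's law) finishes this part.

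For the first assertion, the key preliminary step is the growth $E_1(k)\to\infty$ as $|k|\to\infty$. Writing $\tilde a_u:=(u,i\partial_\tau u)\in\re$ (real since $i\partial_\tau$ is symmetric on $\h_0^1(\omega)$) and expanding,
$$
(u,h_\beta(k)u)\,=\,\|\nabla u\|^2 + \beta^2\|\partial_\tau u\|^2 + 2k\beta\tilde a_u + k^2 \,\geq\,\|\nabla u\|^2 + (k+\beta\tilde a_u)^2,
$$
by completing the square and using $|\tilde a_u|\leq\|\partial_\tau u\|$ (Cauchy--Schwarz). Combined with $\|\partial_\tau u\|\leq r_0\|\nabla u\|$, where $r_0:=\sup_{x_\omega\in\omega}|x_\omega|$ is finite because $\omega$ is bounded, any upper bound $E_1(k)\leq M$ forces $|k|\leq (1+|\beta|r_0)\sqrt M$ on a near-minimizer. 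Hence $K$ is compact, and since every $\lambda_\ell(k)\geq E_1(k)$, any $k^*$ with $\lambda_\ell(k^*)\leq R$ must lie in $K$.

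Next I would invoke Kato's analytic perturbation theory for the type-A holomorphic family of Lemma \ref{type A}: in a neighborhood of each $k_0\in K$, the eigenvalues of $h_\beta(k)$ not exceeding $R+1$ are given by finitely many analytic branches, each coinciding with some $\lambda_\ell$. Covering the compact set $K$ by finitely many such neighborhoods shows that the index set $\mathcal L_R:=\{\ell\in\mathcal L:\exists\,k\in\re,\ \lambda_\ell(k)\leq R\}$ is finite. For each $\ell\in\mathcal L_R$ the set $K_\ell:=\{k:\lambda_\ell(k)\leq R\}$ is compact (again since $\lambda_\ell\geq E_1\to\infty$); on it the analytic function $\partial_k\lambda_\ell$ is not identically zero (else $\lambda_\ell$ would be constant, contradicting $\lambda_\ell\to\infty$) and therefore has only finitely many zeros. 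Analogously, for distinct $\ell,\ell'\in\mathcal L_R$ the analytic function $\lambda_\ell-\lambda_{\ell'}$ is nontrivial (as recalled after \eqref{anal-vect}) and has finitely many zeros on the compact set $K_\ell\cap K_{\ell'}$. Summing the resulting critical and crossing values over the finitely many indices and pairs in $\mathcal L_R$ yields the finiteness of $\E_c\cap(-\infty,R]$.

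The main obstacle will be the growth step $E_1(k)\to\infty$: the perturbation $2k\beta(i\partial_\tau)$ is symmetric, unbounded in $k$, and indefinite in sign, so the quadratic growth in $k$ is not immediate from the pointwise non-negativity $(i\beta\partial_\tau+k)^2\geq 0$. The trick is to absorb the indefinite cross term by completing the square in the form expectation, and then to convert the resulting inequality into a bound on $|k|$ via the geometric control $\|\partial_\tau u\|\leq r_0\|\nabla u\|$, which is the only place the boundedness of $\omega$ enters quantitatively.
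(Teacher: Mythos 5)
Your proof is correct, but follows a genuinely different route from the paper's in each of its three ingredients. For the quadratic growth of $E_1(k)$, you derive it from scratch by completing the square in the fiber form and using $\|\partial_\tau u\|\le r_0\|\nabla u\|$, whereas the paper simply cites the stronger lower bound $E_1(k)\ge E_1(0)+ck^2$ from \cite[Theorem 3.1]{BrKoRaSo}. Your derivation is self-contained and sufficient (it yields $E_1(k)\le M\Rightarrow |k|\lesssim\sqrt M$, which is all that is needed), at the cost of not recovering the sharp quadratic constant. For the finiteness of the relevant index set, you cover the compact set $K=\{E_1\le R\}$ by neighborhoods where Kato's local theory gives finitely many analytic branches below a threshold; the paper instead argues by contradiction, using the uniform Lipschitz bound on $\sqrt{\lambda_\ell}$ from Lemma 5.2 to extract an accumulation point $k_\infty$ where the spectrum of $h_\beta(k_\infty)$ would accumulate at $R$. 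Both are valid; the paper's version is a bit more elementary in that it avoids invoking the local branch structure, while yours is arguably cleaner conceptually. For the second assertion, your min--max comparison $h_\beta(k)\ge-\Delta_\omega$, giving $E_n(k)\ge\mu_n$ uniformly in $k$, is a nice direct argument; the paper instead sets $N_R=\#\el_R+1$ after the first part is established. (Incidentally, for this last step you do not need Weyl's law: the discreteness of $\sigma(-\Delta_\omega)$ with $\mu_n\to\infty$ already gives finiteness of $\{n:\mu_n\le R\}$.)
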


\begin{proof}
We know that
\begin{equation} \label{bkrs}
\inf \sigma(h_\beta(k)) = E_1(k) \, \geq \, E_1(0)  +  c \, k^2,   \qquad  \ k \in \re,
\end{equation}
for some $c\in(0,1)$, see \cite[Theorem 3.1]{BrKoRaSo}.  This
means that there exists some $k_R >0$ such that
\begin{equation} \label{kr}
E_1(k) > R, \qquad \ k\, : |k| > k_R.
\end{equation}
Let us denote $I_R= [-k_R, k_R]$. Hence for any $\ell \in \el$ we have $\lambda_\ell(k) \geq E_1(k) >R$ on $\re\setminus I_R$. We claim that the set
$$
\el_R := \{\ell\in\el\, :\, \exists\, k\in I_R\, :\, \lambda_\ell(k) \leq R\,  \}
$$
is finite. Indeed, if  $\# \el_R = \infty$, then, in view of \eqref{kr}, there is an infinite sequence $\{k_j\}
\subset I_R$ such that  $\lambda_j(k_j) =R$ for all $j\in \el_R$.
  By inequalities \eqref{lambda} and \eqref{lambda'} it follows that
$$
\sup_{j\in\el_R}\, \max_{k\in I_R}\,   |\partial_k \lambda_j(k)|
\leq 4\, k_R + 2\sqrt{R}.
$$
Let $k_\infty\in I_R$ be an accumulation point of the sequence
$\{k_j\}$. Hence, for any $\eps>0$ there exists an infinite set
$\J_\eps\subset \el_R$ such that $|\lambda_{j}(k_\infty) -R| \leq
\eps$ for all $j\in\J_\eps$. This means that $R$ is an
accumulation point of the spectrum of $h_\beta(k_\infty)$ which
contradicts the fact that  $\sigma(h_\beta(k_\infty))$ is
discrete. We thus conclude that the set $\el_R$ is finite.

\smallskip

\noindent Since $\lambda_\ell(k)-\lambda_{\ell'}(k)$ is an
analytic function for any  $\ell,\ell'\in \el_R$, it has finitely
many zeros in the interval $I_R$. Next, by \eqref{bkrs} it follows
that none of the eigenvalues $\lambda_\ell(k), \ell\in \el$, is
constant and therefore, by analyticity, every $\partial_k
\lambda_\ell(k)$ has finitely many zeros in $I_R$. Hence the sets
$$
\cup_{\ell\neq \ell', \ell, \ell' \in \el_R} \{k\in I_R\, :\, \lambda_\ell(k)=\lambda_{\ell'}(k) \} \quad \text{and} \quad
\cup_{\ell\in \el_R} \{k\in I_R\, :\, \partial_k\lambda_\ell(k)=  0\}
$$
are finite and therefore $ (-\infty,R]\cap\E_c$ is finite too. As for the second statement of the Lemma, note that, by \eqref{kr}, $E_n(k) >R$ for all $k\not\in I_R$ and for all $n\in\N$. If we now set
$N_R = \# \el_R+1$, then $N_R$ satisfies the claim.
\end{proof}
\begin{proof}[Proof of  Lemma \ref{loc-finite}]
Let $-\infty < a < b < \infty$ be given. By Lemma \ref{l4} we know
that $\E_c \cap (a,b)$ is a finite set. Since the functions
$E_n(k)$ are analytic away from the crossing points of the
functions \eqref{anal-vect}, it follows that $\E \subset \E_c$.
Hence $\E \cap (a,b)$ is finite too.
\end{proof}

\begin{lemma} \label{l2} Let $I\subset \re$ be an open interval. Assume that $E_n(k)$ is analytic on $I$ and let $p_n(k)$ be the associated eigenprojection. Then
\begin{equation} \label{FH}
 p_n(k)\, \partial_k E_n(k) = 2\,  p_n(k)\, (k-i\beta \partial_{\tau} )\, p_n(k), \qquad  \ k\in I.
\end{equation}
\end{lemma}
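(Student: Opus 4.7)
The plan is a standard projection-valued Feynman--Hellmann computation on the interval $I$. First, since by Lemma \ref{type A} the family $\{h_\beta(k)\}_{k\in\re}$ is self-adjoint holomorphic of type A, the analyticity of $E_n(k)$ on $I$ together with the constancy of its multiplicity $m_n$ there (cf.\ the remark preceding the statement of this lemma) allows us to invoke \cite[Theorem VII.3.9]{K}. This produces an orthonormal system $\{\psi_n^{(j)}(k)\}_{j=1}^{m_n}$ of real-analytic eigenvectors of $h_\beta(k)$ corresponding to $E_n(k)$, such that
$$
p_n(k)\, u \,=\, \sum_{j=1}^{m_n} \big(\psi_n^{(j)}(k),\, u\big)_{{\rm L}^2(\omega)}\, \psi_n^{(j)}(k), \qquad u \in {\rm L}^2(\omega),
$$
and, in particular, $\ran p_n(k) \subset \h^2(\omega) \cap \h_0^1(\omega)$.

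Second, expanding
$$
h_\beta(k) \,=\, -\Delta_\omega - \beta^2 \partial_\tau^2 + 2ik\beta\, \partial_\tau + k^2,
$$
one reads off $\partial_k h_\beta(k) = 2(i\beta \partial_\tau + k)$, symmetric on the common domain $\h^2(\omega) \cap \h_0^1(\omega)$. I would then differentiate the relation $h_\beta(k)\, \psi_n^{(j)}(k) = E_n(k)\, \psi_n^{(j)}(k)$ with respect to $k$ and take the inner product in ${\rm L}^2(\omega)$ with $\psi_n^{(i)}(k)$. Using the self-adjointness of $h_\beta(k)$ to transfer it onto $\psi_n^{(i)}(k)$, the two terms involving $\partial_k \psi_n^{(j)}(k)$ cancel against one another, leaving
$$
2\, \big(\psi_n^{(i)}(k),\, (i\beta\partial_\tau + k)\, \psi_n^{(j)}(k)\big)_{{\rm L}^2(\omega)} \,=\, \partial_k E_n(k)\, \delta_{ij}, \qquad i,j = 1,\ldots,m_n.
$$

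Third, this is precisely the matrix form, in the orthonormal basis $\{\psi_n^{(j)}(k)\}_j$ of $\ran p_n(k)$, of the operator identity $2\, p_n(k)\, (i\beta \partial_\tau + k)\, p_n(k) = \partial_k E_n(k)\, p_n(k)$, which is exactly \eqref{FH}. There is no real obstacle here: the only point requiring care is that $i\beta \partial_\tau + k$ is unbounded, but its composition with $p_n(k)$ on the right is justified by the inclusion $\ran p_n(k) \subset \h^2(\omega) \cap \h_0^1(\omega)$, on which $\partial_\tau$ acts boundedly into $\h^1(\omega)$.
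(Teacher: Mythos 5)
Your proposal is correct and follows essentially the same route as the paper: invoke the Rellich theorem \cite[Theorem VII.3.9]{K} (enabled by the type-A holomorphy from Lemma \ref{type A}) to obtain an analytic orthonormal basis of $\ran p_n(k)$, derive the Feynman--Hellmann matrix identity $2\,(\psi_n^{(i)}(k),(i\beta\partial_\tau+k)\psi_n^{(j)}(k))=\partial_k E_n(k)\,\delta_{ij}$, and then read it as the operator identity \eqref{FH} sandwiched between the projections. The only cosmetic difference is that you differentiate the eigenvalue equation $h_\beta(k)\psi_n^{(j)}(k)=E_n(k)\psi_n^{(j)}(k)$ and pair with $\psi_n^{(i)}(k)$, whereas the paper differentiates the scalar quantity $(h_\beta(k)\phi_n^j(k),\phi_n^i(k))=E_n(k)\delta_{ij}$ directly; after using self-adjointness these are the same computation, and your added remark on $\ran p_n(k)\subset \h^2(\omega)\cap\h^1_0(\omega)$ to justify applying the unbounded factor is a sound, if brief, addendum.
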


\begin{proof}
Since $E_n(k)$ is analytic on $I$, it coincides there with one of
the analytic functions \eqref{anal-vect}. Hence by the Rellich
Theorem, \cite[Theorem VII.3.9]{K}, there exists a family of
orthonormal eigenvectors $\phi_n^j(k), j=1,\dots,q(n,I)$, analytic
on $I$, associated with $E_n(k)$. Here $q(n,I)$ denotes the
multiplicity of $E_n(k)$ on $I$. Since $(\phi_n^j(k), \,
\phi_n^i(k) )_{{\rm L}^2(\omega)}=\delta_{i,j}$ for all $k\in I$,
where $\delta_{i,j}$ is the Kronecker symbol, we have
$$
( h_\beta(k) \, \phi_n^j(k), \, \phi_n^i(k) )_{{\rm L}^2(\omega)} = E_n(k)\, \delta_{i,j} \qquad k\in I.
$$
By differentiating this identity with respect to $k$, we easily obtain
\begin{equation} \label{fh-eq}
2 \, ( (k-i\beta \partial_{\tau} )\, \phi_n^j(k), \phi_n^i(k))_{{\rm L}^2(\omega)}= \partial_k E_n(k)\,\delta_{i,j} \qquad   k\in\re,
\end{equation}
Hence for any $u\in {\rm L}^2(\omega)$
\begin{align*}
2\, p_n(k)\, (k-i\beta  \partial_{\tau})\,  p_n(k)\, u
& = 2 \sum_{i,j=1}^{q(n,I)}\, \phi_n^i(k)\, (\phi_n^i(k),(k-i\beta \partial_{\tau} )\, \phi_n^j(k))_{{\rm L}^2(\omega)}\,  ( \phi_n^j(k), u)_{{\rm L}^2(\omega)} \\
& = \partial_k E_n(k) \, \sum_{j=1}^{q(n,I)}\, \phi_n^j(k)\,  ( \phi_n^j(k), u)_{{\rm L}^2(\omega)}  = \partial_k E_n(k) \, p_n(k)\, u.
\end{align*}
\end{proof}

\medskip

\noindent For the next lemma we need the following definition. Let $\mathcal{I}\subset \re $ be an open interval. Fix $0<\eta < |\mathcal{I}|/2$ and define the interval
\begin{equation}  \label{eps-int}
\mathcal{I}(\eta) := \{ r \in \mathcal{I}  \; : \; \rm{dist}(r ,\re
\setminus \mathcal{I}) \geq \eta \}.
\end{equation}
Let $\chi_\mathcal{I}$ be a $C^\infty$ smooth function such that
\begin{equation} \label{chi}
\chi_\mathcal{I}(r) =1 \quad  \text{if} \quad r \in \mathcal{I}(\eta) \qquad  \text{and} \qquad \chi_\mathcal{I}(r) =0 \quad \text{if}\quad r \notin  \mathcal{I}.
\end{equation}

\begin{lemma} \label{l3} Suppose that $I\subset \re$ is an open interval. Let $\lambda(k)$ and $\mu(k)$ be two analytic functions from the family \eqref{anal-vect} and assume that there is exactly one point $k_0\in I$ such that $\lambda(k_0)=\mu(k_0)$, and $\lambda(k) \neq \mu(k)$ for $k_0\neq k \in I$. Let $\pi_\lambda(k)$ and $\pi_\mu(k)$ be the eigenprojections associated with $\lambda(k)$ and $\mu(k)$.  Then in the sense of quadratic forms on ${\rm L}^2(\omega)$ we have
\begin{align} \label{l31}
& \chi_\mathcal{I}(\lambda(k))\, \pi_\lambda(k)\, (k-i\beta \partial_{\tau} )\, \pi_\mu(k)\, \chi_\mathcal{I}(\mu(k)) \, \leq
\\
 & \qquad \qquad \qquad \leq  b_{\lambda,\mu} \, |\lambda(k)-\mu(k)|\, (\chi^2_\mathcal{I}(\lambda(k))\, \pi_\lambda(k)+\chi^2_\mathcal{I}(\mu(k))\, \pi_\mu(k)) \nonumber
\end{align}
for all $k\in I, k\neq k_0$, where $b_{\lambda,\mu}>0$ is a constant which depends only on $\lambda,\mu$ and $I$.
\end{lemma}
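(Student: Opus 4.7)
The idea is to derive an off-diagonal Feynman--Hellmann identity
$$
2\pi_\lambda(k)(i\beta\partial_\tau+k)\pi_\mu(k) = (\mu(k)-\lambda(k))\,\pi_\lambda(k)\,\partial_k\pi_\mu(k),\qquad k\in I,
$$
which trades the factor $i\beta\partial_\tau+k$ for one of size $|\mu-\lambda|$, and then to convert this into the symmetric quadratic form bound via an elementary projection identity and Cauchy--Schwarz.

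The identity above is obtained by differentiating $h_\beta(k)\pi_\mu(k)=\mu(k)\pi_\mu(k)$ in $k$ and left-multiplying by $\pi_\lambda(k)$: the term $\pi_\lambda h_\beta(\partial_k\pi_\mu)$ becomes $\lambda\pi_\lambda(\partial_k\pi_\mu)$ by the spectral commutation $\pi_\lambda h_\beta=\lambda\pi_\lambda$, while the $\partial_k\mu$-term drops out because $\pi_\lambda(k)\pi_\mu(k)=0$ (valid for $k\neq k_0$ by hypothesis and at $k_0$ by continuity, since both projections are analytic on $I$ by Lemma \ref{type A} and the Rellich theorem); the derivation parallels that of Lemma \ref{l2}, and uses $\partial_k h_\beta(k)=2(i\beta\partial_\tau+k)$. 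To bring a second $\pi_\mu$ on the right of $\partial_k\pi_\mu$, I invoke $\pi_\mu(\partial_k\pi_\mu)\pi_\mu=0$, which is obtained by differentiating $\pi_\mu^2=\pi_\mu$; together with $\pi_\lambda\pi_\mu=0$ this gives $\pi_\lambda(\partial_k\pi_\mu)=\pi_\lambda(\partial_k\pi_\mu)\pi_\mu$.

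Since $\chi_\mathcal{I}(\lambda(k))$ and $\chi_\mathcal{I}(\mu(k))$ are scalar multipliers in $k$, for any $u\in{\rm L}^2(\omega)$ the above identities yield
$$
\bigl(u,\chi_\mathcal{I}(\lambda)\pi_\lambda(i\beta\partial_\tau+k)\pi_\mu\chi_\mathcal{I}(\mu)u\bigr) = \tfrac{1}{2}\,\chi_\mathcal{I}(\lambda)\chi_\mathcal{I}(\mu)(\mu-\lambda)\bigl(\pi_\lambda u,(\partial_k\pi_\mu)\pi_\mu u\bigr).
$$
Cauchy--Schwarz applied to $(\pi_\lambda u,(\partial_k\pi_\mu)\pi_\mu u)$ followed by $2ab\leq a^2+b^2$ applied to the pair $\chi_\mathcal{I}(\lambda)\|\pi_\lambda u\|$, $\chi_\mathcal{I}(\mu)\|\pi_\mu u\|$ then bound the modulus of this sesquilinear form by
$$
\tfrac{1}{4}\,\|\partial_k\pi_\mu(k)\|\,|\mu-\lambda|\,\bigl(\chi_\mathcal{I}^2(\lambda)\|\pi_\lambda u\|^2+\chi_\mathcal{I}^2(\mu)\|\pi_\mu u\|^2\bigr),
$$
which is \eqref{l31} in the sense of quadratic forms, with $b_{\lambda,\mu}=\tfrac{1}{4}\sup_{k\in K}\|\partial_k\pi_\mu(k)\|$, where $K$ denotes the $k$-set on which $\chi_\mathcal{I}(\lambda(k))\chi_\mathcal{I}(\mu(k))\neq 0$.

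The main technical point is the finiteness of $b_{\lambda,\mu}$. Since $\mathcal{I}$ is bounded and, by \eqref{bkrs} and Lemma \ref{l4}, both $\lambda(k)$ and $\mu(k)$ tend to $+\infty$ as $|k|\to\infty$, the set $K$ is bounded. By Lemma \ref{type A} and the Rellich theorem, the projection $\pi_\mu$ onto the analytic eigenvector branch is analytic on all of $\re$, so $\partial_k\pi_\mu$ is continuous and therefore bounded on the bounded set $K$. The resulting constant depends only on $\lambda$, $\mu$, and $I$, as required.
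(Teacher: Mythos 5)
Your proof is correct and follows the paper's approach: the key step in both is to differentiate the eigenrelation (the paper differentiates $(h_\beta(k)\psi_\lambda^i,\psi_\mu^j)=0$, you differentiate $h_\beta\pi_\mu=\mu\pi_\mu$ and use $\pi_\lambda\pi_\mu=0$) so that the off-diagonal matrix element of $i\beta\partial_\tau+k$ acquires a factor $\mu-\lambda$, after which Cauchy--Schwarz together with the arithmetic-geometric mean inequality finishes the argument. The only difference is presentational: you package the off-diagonal Feynman--Hellmann identity at the level of the eigenprojections and take $b_{\lambda,\mu}\sim\sup_k\|\partial_k\pi_\mu(k)\|$, whereas the paper fixes Rellich-analytic orthonormal eigenbases and expresses the constant through the matrix elements $(\partial_k\psi_\lambda^i(k),\psi_\mu^j(k))$ and the multiplicities.
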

\begin{proof}
Let  $q(\lambda)$ and $q(\mu)$ denote the multiplicities of
$\lambda(k)$ and $\mu(k)$. Let $\psi_\lambda^i(k),
i=1,\dots,q(\lambda)$ and $\psi_\mu^j(k), i=1,\dots,q(\mu)$ be
sets of mutually orthonormal eigenvectors associated to
$\lambda(k)$ and $\mu(k)$. By the Rellich Theorem, \cite[Theorem
VII.3.9]{K}, these vectors can be chosen analytic in $k$. Hence,
by differentiating the equation
$$
( h_\beta(k) \, \psi_\lambda^i(k), \, \psi_\mu^j(k) )_{{\rm L}^2(\omega)} =0 \qquad \ k\in I,\ k\neq k_0
$$
with respect to $k$ we arrive at
\begin{equation} \label{lmu}
2( (k-i\beta \partial_\tau) \, \psi_\lambda^i(k), \, \psi_\mu^j(k) )_{{\rm L}^2(\omega)} = (\lambda(k)-\mu(k)) (\partial_k \psi_\lambda^i(k), \psi_\mu^j(k))_{{\rm L}^2(\omega)} \quad \ k\in I, \ k\neq k_0.
\end{equation}
Note that for all $k\neq k_0, k\in I$ we have
$$
\pi_\lambda(k) = \sum_{j=1}^{q(\lambda)} \psi_\lambda^j(k)\, (\psi_\lambda^j(k), \cdot\, )_{{\rm L}^2(\omega)},
 \quad \pi_\mu(k) = \sum_{i=1}^{q(\mu)} \psi_\mu^i(k)\, (\psi_\mu^i(k), \cdot\, )_{{\rm L}^2(\omega)}.
$$
Let $u\in {\rm L}^2(\omega)$ and let
\begin{equation} \label{btilde}
\max_{1\leq j \leq q(\mu)} \max_{1\leq i \leq q(\lambda)}\sup_{k\in I} |(\partial_k \psi_\lambda^i(k), \psi_\mu^j(k))_{{\rm L}^2(\omega)}| =: \tilde b_{\lambda,\mu}.
\end{equation}
From \eqref{lmu}  we obtain
\begin{align*}
& (u, \, \chi_\mathcal{I}(\lambda(k))\, \pi_\lambda(k)\, (k-i\beta \partial_{\tau} )\, \pi_\mu(k)\, \chi_\mathcal{I}(\mu(k))\, u )_{{\rm L}^2(\omega)}= \\
& = \frac 12\,  \sum_{j=1}^{q(\lambda)}\, \sum_{i=1}^{q(\mu)}\,  \chi_\mathcal{I}(\lambda(k))\,  \chi_\mathcal{I}(\mu(k))(u, \psi_\lambda^i(k))\,  (\psi_\mu^j(k), u)\, (\lambda(k)-\mu(k)) (\partial_k \psi_\lambda^j(k), \psi_\mu^i(k))\\
& \quad \leq \tilde b_{\lambda,\mu}\, |\lambda(k)-\mu(k)|\,  \sum_{j=1}^{q(\lambda)}\, \sum_{i=1}^{q(\mu)}\, \big( \chi^2_\mathcal{I}(\lambda(k)) |(u, \psi_\lambda^j(k))|^2+\chi^2_\mathcal{I}(\mu(k))\, |(\psi_\mu^i(k), u)|^2 \big) \\
& \quad \leq  b_{\lambda,\mu}\,  |\lambda(k)-\mu(k)|\, \big( \chi^2_\mathcal{I}(\lambda(k)) (u, \pi_\lambda(k)\, u)+ \chi^2_\mathcal{I}(\mu(k))\, (u,\pi_\mu(k)\, u) \big),
\end{align*}
for all $k\neq k_0, k\in I$,
where $b_{\lambda,\mu} = \tilde b_{\lambda,\mu}\, \max\{q(\lambda), q(\mu)\}$.
\end{proof}

\smallskip

\section{The conjugate operator}
\label{sect-conjugate} \noindent This section is devoted to the
construction of the conjugate operator $A$ occurring in the Mourre
estimates
obtained in the subsequent two sections. \\

\noindent Pick $\gamma \in C_0^\infty(\re; \re)$, and introduce
the operator
\begin {equation} \label{A}
\hat{A}_0 = \frac{i}{2 } (\gamma\, \partial_k + \partial_k\,
\gamma), \quad {\rm D} (\hat{A}_0)=\mathcal{S}(\re),
\end{equation}
with $\mathcal{S}(\re)$ being the Schwartz class on $\re$.

\begin{prop} \label{essa}
Let $\gamma\in C_0^\infty(\re; \re)$. Then the operator
$\hat{A}_0$ defined in \eqref{A} is essentially self-adjoint in
${\rm L}^2(\re)$.
\end{prop}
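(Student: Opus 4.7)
The plan is to realize $\hat A_0$ as the infinitesimal generator of a strongly continuous one-parameter unitary group on $L^2(\re)$ that preserves $\mathcal{S}(\re)$; by the standard criterion that any dense, $U(t)$-invariant subspace of the domain of a self-adjoint generator is a core, this will force $\hat A_0$ to be essentially self-adjoint on $\mathcal{S}(\re)$.

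First I would construct the flow of the vector field $\gamma\,\partial_k$. Since $\gamma\in C_0^\infty(\re;\re)$ is smooth, bounded and globally Lipschitz, the Cauchy problem $\dot\phi_t(k)=\gamma(\phi_t(k))$, $\phi_0(k)=k$, defines a smooth flow $\phi_t:\re\to\re$ for all $t\in\re$ with $\phi_t(k)=k$ for $k\notin\supp\gamma$ and with strictly positive smooth Jacobian $\phi_t'(k)$ satisfying $\partial_t\phi_t'(k)=\gamma'(\phi_t(k))\,\phi_t'(k)$. Setting
\[
(U(t)f)(k):=\sqrt{\phi_t'(k)}\, f(\phi_t(k)),
\]
the change of variables $y=\phi_t(k)$ yields $\|U(t)f\|_2=\|f\|_2$, the group law $U(t+s)=U(t)U(s)$ follows from $\phi_{t+s}=\phi_t\circ\phi_s$, and strong continuity is checked first for $f\in C_0^\infty(\re)$ (where everything reduces to a fixed compact interval) and then extended to $L^2(\re)$ by density. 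Since $\phi_t$ is a smooth diffeomorphism that equals the identity outside a fixed compact set, $\mathcal{S}(\re)$ is visibly $U(t)$-invariant.

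For $f\in\mathcal{S}(\re)$, using $\partial_t\phi_t(k)|_{t=0}=\gamma(k)$ and $\partial_t\phi_t'(k)|_{t=0}=\gamma'(k)$, a direct computation gives
\[
\lim_{t\to 0}\frac{U(t)f-f}{t}\,=\,\gamma\, f'+\tfrac12\gamma'\, f\,=\,-i\hat A_0 f,
\]
with convergence in $L^2(\re)$ by dominated convergence (exploiting the Schwartz decay of $f$ and the compact support of $\gamma$). By Stone's theorem the self-adjoint generator $B$ of $U(t)$ therefore extends the (a priori symmetric) operator $\hat A_0$, and the core criterion above then yields $\overline{\hat A_0}=B$, establishing essential self-adjointness. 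The only nontrivial step I foresee is the verification of strong continuity together with $L^2$-differentiability at $t=0$, but both are routine since all nonconstant behaviour of the flow is confined to the compact set $\supp\gamma$.
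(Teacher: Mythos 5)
Your proof is correct, but it takes a genuinely different route from the paper's. The paper argues by deficiency indices: on a maximal interval $(a,b)$ where (say) $\gamma>0$, the two deficiency equations $\hat{A}_0^*u=\pm iu$ are solved in closed form, and a Taylor expansion of $\gamma$ at the endpoints (using $\gamma'(a)\geq0$, $\gamma'(b)\leq0$) shows that $u_+$ blows up like a nonintegrable power near $b$ and $u_-$ near $a$, so neither lies in ${\rm L}^2$; hence the deficiency indices are $(0,0)$. You instead construct the flow $\varphi_t$ of the vector field $\gamma\partial_k$, form the unitary group of weighted composition operators, check invariance of $\mathcal{S}(\re)$ and strong $t$-differentiability at $t=0$ there, and invoke the invariant-domain criterion (Reed--Simon, Theorem VIII.10). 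This is exactly the mechanism the paper itself uses one proposition later, in Proposition~\ref{group-fiber}, to identify $\hat A$ as the generator of $W(t)$ --- so in effect you have merged Propositions~\ref{essa} and~\ref{group-fiber} into a single argument. What you gain is that the explicit ODE estimate disappears and the group, which is needed later anyway, becomes the starting point; what you pay is that essential self-adjointness now rests on a prior verification of strong continuity and $t=0$ differentiability of the group, which, while routine because $\gamma$ has compact support, is not logically free. One small sign-bookkeeping remark: with your convention $\dot\varphi_t=\gamma(\varphi_t)$ and the normalization $U(t)=e^{itB}$, the limit $\gamma f'+\tfrac12\gamma'f=-i\hat A_0 f$ gives $B=-\hat A_0$ rather than $\hat A_0$; this is harmless for essential self-adjointness, but to match the group $W(t)=e^{it\hat A}$ of Proposition~\ref{group-fiber} one should flow along $-\gamma$, as in~\eqref{cauchy}.
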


\begin{proof}
Without loss of generality we may assume that there exist $a<b$
such that $\gamma(a)=\gamma(b)=0$ and $\gamma(k)>0$ for
$k\in(a,b)$. Consider solutions $u_{\pm}$ to the equations
\begin{equation}
(\hat{A}_0^*\, u)(k) = \frac{i}{2}\,  (\gamma(k) \partial_k + \partial_k \, \gamma(k))\, u(k) = \pm\, i\, u(k).
\end{equation}
A direct calculation gives
\begin{equation}  \label{solution}
u_{\pm}(k) =  \exp\Big( \int_{k_0}^k\, \frac{\pm 2-\gamma'(r)}{2\gamma(r)}\, dr\Big), \quad  k \in (a,b)
\end{equation}
for some $k_0 \in (a, b)$. The positivity of $\gamma$ in $(a,b)$
implies that $\gamma'(a)\geq 0$ and $\gamma'(b)\leq 0$. Hence by
the Taylor expansion there exists an $\eps>0$ and positive
constants $d_a, d_b$ such that
$$
\gamma(r) \leq d_a\, (r-a) \quad \text{for\quad } r\in (a,a+\eps), \qquad \gamma(r) \leq d_b\, (b-r) \quad \text{for\quad } r\in (b-\eps,b).
$$
This, combined with \eqref{solution}, yields
\begin{align*}
u_+(k) & = \Big(\frac{\gamma(k_0)}{\gamma(k)}\Big)^{1/2}\, \exp\Big( \int_{k_0}^k \frac{dr}{\gamma(r)} \Big) \geq \Big(\frac{\gamma(k_0)}{\gamma(k)}\Big)^{1/2}\, \exp\Big( \int_{b-\eps}^k \frac{dr}{ d_b(b-r)} \Big) \\
& \geq c_\eps \, (b-k)^{-\frac 12-\frac{1}{d_b}}, \qquad\ k\in (b-\eps, b),
\end{align*}
for some $c_\eps >0$. Hence $u_+ \not\in {\rm L}^2(\re)$. The same argument shows that
$$
u_-(k) \geq \tilde c_\eps \, (k-a)^{-\frac 12-\frac{1}{ d_a}}, \qquad \forall\ k\in (a, a+\eps), \quad \tilde c_\eps>0,
$$
which implies  $u_-\not\in {\rm L}^2(\re)$. We thus conclude that $\hat{A}_0$ has deficiency indices $(0,0)$ and therefore is essentially self-adjoint.
\end{proof}

\noindent We define the self-adjoint operator $\hat{A}$ as the
closure of $\hat{A}_0$ in ${\rm L}^2(\re)$.\\

\noindent Further, we describe explicitly the action of the
unitary group generated by $\hat{A}$.

\noindent Given a $k\in\re$ and a function $\gamma\in C_0^\infty(\re)$, we consider the
initial value problem
\begin{equation} \label{cauchy}
\frac{d}{dt}\, \varphi(t,k) = -\gamma(\varphi(t,k)),\qquad
\varphi(0,k)= k.
\end{equation}

\begin{prop} \label{group-fiber}
The mapping
\begin{equation}\label{wt}
(W(t)f)(k) = |\partial_k\varphi(t,k)|^{1/2}\, f(\varphi(t,k))
\end{equation}
defines a strongly continuous one-parameter unitary group on
${\rm L}^2(\re)$. Moreover, $\hat A$ is the generator of $W(t)$.
\end{prop}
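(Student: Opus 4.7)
The plan is to build the unitary group out of the classical flow of the ODE \eqref{cauchy}, check group, isometry, and strong continuity directly, and then identify the generator by differentiation on Schwartz functions, falling back on Proposition \ref{essa} for the last step.

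First I would record the basic properties of the flow. Since $\gamma \in C_0^\infty(\re;\re)$ is globally Lipschitz, the Cauchy problem \eqref{cauchy} has a unique $C^\infty$ solution $\varphi(t,k)$ defined for all $(t,k) \in \re^2$. Uniqueness yields the cocycle identity $\varphi(t+s,k) = \varphi(t,\varphi(s,k))$ and $\varphi(-t,\varphi(t,k)) = k$, so $\varphi(t,\cdot)$ is a $C^\infty$ diffeomorphism of $\re$. Differentiating \eqref{cauchy} in $k$ gives $\partial_t \partial_k \varphi = -\gamma'(\varphi)\,\partial_k\varphi$, whence
$$
\partial_k\varphi(t,k) = \exp\!\Big(-\!\int_0^t \gamma'(\varphi(s,k))\,ds\Big) > 0,
$$
so the absolute value in \eqref{wt} can be dropped.

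Second, I would verify that $W(t)$ is a strongly continuous unitary group. The change of variables $y = \varphi(t,k)$ immediately yields $\|W(t)f\|_{{\rm L}^2}^2 = \|f\|_{{\rm L}^2}^2$ for every $f\in{\rm L}^2(\re)$, while the cocycle identity combined with the chain rule gives $W(0)=\id$ and $W(t+s) = W(t)W(s)$. For strong continuity at $t=0$, I would first take $f \in C_c^\infty(\re)$ and apply dominated convergence pointwise, using that $\varphi(t,\cdot)\to \mathrm{id}$ and $\partial_k\varphi(t,\cdot)\to 1$ locally uniformly on $\re$ and that $f$ has compact support; this extends to all of ${\rm L}^2(\re)$ by density together with $\|W(t)\|=1$, and the group property upgrades it to strong continuity at every $t\in\re$.

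Third, I would identify the generator. Let $B$ denote the self-adjoint generator supplied by Stone's theorem, so $W(t)=e^{itB}$. For $f\in\mathcal{S}(\re)$, using the initial conditions $\varphi(0,k)=k$, $\partial_t\varphi(0,k)=-\gamma(k)$, and $\partial_t\partial_k\varphi(0,k)=-\gamma'(k)$, a direct pointwise computation from \eqref{wt} gives
$$
\frac{d}{dt}(W(t)f)(k)\Big|_{t=0} = -\tfrac{1}{2}\gamma'(k)\,f(k)-\gamma(k)\,f'(k) = i\hat A_0 f(k).
$$
To promote this pointwise derivative to an ${\rm L}^2$ limit of the difference quotient $t^{-1}(W(t)f-f)$, I would use the fundamental theorem of calculus in $t$ together with the compact support of $\gamma,\gamma'$ and the Schwartz decay of $f,f'$ to produce a $t$-independent ${\rm L}^2$ majorant; dominated convergence then gives $\mathcal{S}(\re)\subset D(B)$ and $Bf=\hat A_0 f$ for $f\in\mathcal{S}(\re)$. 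Thus $B$ is a self-adjoint extension of $\hat A_0$, and Proposition \ref{essa} forces $B=\hat A$.

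The main obstacle is the last step: the pointwise identification of the derivative is elementary, but obtaining ${\rm L}^2$ convergence of the difference quotient requires carefully exploiting the Schwartz decay of $f$ together with the compact support of $\gamma$ to construct a uniform ${\rm L}^2$ bound. Everything else is a standard packaging of flow properties and Stone's theorem.
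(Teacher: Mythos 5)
Your proposal is correct and follows essentially the same route as the paper: establish the flow properties and the explicit positive expression for $\partial_k\varphi$, derive the group law from the cocycle identity, prove strong continuity, and then identify the generator on Schwartz functions and appeal to the essential self-adjointness from Proposition \ref{essa}. You spell out a couple of steps the paper compresses or omits (the change-of-variables check that $W(t)$ is an isometry, the density/dominated-convergence reduction for strong continuity, and the explicit ${\rm L}^2$ upgrade of the difference quotient, which the paper handles by citing Reed--Simon Theorem VIII.10), but these are elaborations of the same argument rather than a different one.
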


\begin{proof}
Since $\gamma$ is globally Lipschitz, the Cauchy problem \eqref{cauchy} has a unique
global solution. By the regularity of $\gamma$ and
\cite[Corollary V.4.1]{hart}, it follows that $\varphi \in
C^\infty(\re^2)$. Moreover,
\begin{equation} \label{dk}
\partial_k \varphi (t,k) = \exp \Big(-\int_0^t \gamma'(\varphi(s,k))\, ds\Big) \qquad \forall\ t\geq 0,\ \forall\ k\in\re,
\end{equation}
\cite[Corollary V.3.1]{hart}. Hence $\partial_k\varphi(t,k) > 0$.
Since $\varphi(t+t',k) = \varphi(t, \varphi(t',k))$, we have
$$
W(t) \, W(t') = W(t+t').
$$
Next, from \eqref{cauchy} and \eqref{dk} we deduce that for
$k\not\in\, $supp$\, \gamma$ we have $\varphi(t,k)= k$ for all
$t\geq 0$. In order to verify that $W(t)$ is strongly continuous
on ${\rm L}^2(\re)$, let that $f\in {\rm L}^2(\re)$. We then
have
\begin{align} \label{strong-continuous}
\|W(t) f -f\|_{{\rm L}^2(\re)}^2 & \leq 2\, \|\partial_k\varphi(t,k)^{1/2}(f\circ
\varphi(t, k)-f)\|_{{\rm L}^2(\re)}^2 +2\, \|(\partial_k\varphi(t,k)^{1/2}-1)\,
f\|_{{\rm L}^2(\re)}^2 \\
& \leq c \int_{\mbox{\tiny supp}\, \gamma} \big
(|f(\varphi(t,k))-f(k)|^2\, + |\partial_k\varphi(t,k)^{1/2}-1|^2\,
|f(k)|^2\big)\, dk . \nonumber
\end{align}
From \eqref{dk} and from the fact that $\gamma' \in {\rm L}^\infty(\re)$ it is easily seen that
$\varphi(t,k)\to k$ and $\partial_k\varphi(t,k) \to 1$ as $t\to 0$
uniformly in $k$ on compact subsets of $\re$. Since ${\rm supp}\, \gamma$ is compact, \eqref{strong-continuous}
implies that
$$
\|W(t) f -f\|_{{\rm L}^2(\re)} \ \to \ 0, \qquad  \ t \to 0.
$$
Moreover, using \eqref{cauchy}, a direct calculation gives
$$
\frac{d}{dt} (W(t)f)(k)\, \big |_{t=0} = -\frac 12 \, \gamma'(k) f(k)
- \gamma(k) f'(k) = (i\, \hat A\, f)(k), \quad f\in  \mathcal{S}(\re).
$$
Hence by
\cite[Theorem VIII.10]{RSI} it follows that $\hat A$ generates the unitary group $W(t)$.
\end{proof}

\noindent Let $\gamma$ be as in Theorem \ref{thm1}. By Proposition
\ref{essa} and \cite[Theorem VIII.33]{RSI} it follows that the
operator $\id_\omega \otimes   \hat A$ is essentially self-adjoint
on $C_0^\infty(\omega)\otimes \mathcal{S}(\re)$. The same is true
for the operator $\F^*\, ( \id_\omega \otimes   \hat A) \,  \F$.
We define the conjugate operator $A$ in ${\rm L}^2(\Omega)$ as its
closure:
\begin{equation} \label{bA}
A = \bar{A}_0, \qquad
A_0=  \F^*\, ( \id_\omega \otimes   \hat A) \,  \F , \quad {\rm D}(A_0)=  C_0^\infty(\omega)\otimes \mathcal{S}(\re).
\end{equation}
Let $\Gamma$ be the operator in ${\rm L}^2(\re)$ acting as
\begin {equation} \label{Gamma}
(\Gamma\, \psi)(x_3) : =   (2\pi)^{-1/2} \int_\re
\hat\gamma(x_3-t)\, \psi (t)\, dt, \qquad \hat\gamma := \F_1 \gamma,
\end{equation}
where $\F_1$ denotes the Fourier transform from ${\rm L}^2(\re)$ onto ${\rm L}^2(\re)$:
$$
(\F_1\, f)(k)  = (2\pi)^{-1/2} \int_\re e^{-ik s} f(s)\, ds, \qquad f\in {\rm L}^2(\re).
$$
A direct calculation then shows that
$$
A_0 =  -\frac 12\, \id_\omega\otimes (\Gamma \, x_3+ x_3 \, \Gamma ).
$$

\section{Mourre estimates for a constant twisting} \label{sect-mourre}

\noindent In this section we establish a Mourre estimate for the commutator
$[H_\beta, iA]$ with $\beta$ constant and $A$ defined in
\eqref{bA}.

 \noindent In the sequel, we use the following
notation. Given a self-adjoint positive operator $S$, invertible
in ${\rm L}^2(\Omega)$, we denote by ${\rm D}(S^\nu)^*,\,  \nu >0$,
the completion of ${\rm L}^2(\Omega)$ with respect to the norm $\|
S^{-\nu}\, u\|_{{\rm L}^2(\Omega)}$.


\begin{lemma}\label{com}
The commutator $[ \hat H_\beta,\, i(\id_\omega\otimes \hat A) ]$ defined as a quadratic form
on $C_0^\infty(\Omega)$ extends to a
bounded operator from ${\rm D}(\hat H_\beta)$ into ${\rm D}(\hat H_\beta)^*$. Moreover,
\begin{equation} \label{comm}
[ \hat H_\beta,\, i(\id_\omega\otimes \hat A) ] = 2 \gamma(k) ( k-i\beta \partial_\tau).
\end{equation}
\end{lemma}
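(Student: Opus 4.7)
The plan is to work in the fibered picture, where $\id_\omega \otimes \hat A$ acts as the first-order operator $-\gamma(k)\partial_k - \tfrac{1}{2}\gamma'(k)$ in the variable $k$, while $\hat H_\beta$ acts fibrewise as $h_\beta(k)$. First I would fix a dense core --- for instance $C_0^\infty(\omega)\otimes \mathcal{S}(\re)$, which is invariant under $\id_\omega\otimes\hat A$ and contained in ${\rm D}(\hat H_\beta)$ --- and evaluate the commutator quadratic form there by a direct pointwise-in-$k$ computation.

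The central observation is the identity $\partial_k h_\beta(k) = 2(k+i\beta\partial_\tau)$, which follows at once from the formula $h_\beta(k) = -\Delta_\omega + (i\beta\partial_\tau + k)^2$ on the $k$-independent domain $\h^2(\omega)\cap\h^1_0(\omega)$. Applied to $u$ in the core this gives $[h_\beta(k),\partial_k]u(k) = -(\partial_k h_\beta(k))u(k) = -2(k+i\beta\partial_\tau)u(k)$. Since $\gamma(k)$ and $\gamma'(k)$ are scalar multipliers in $k$ and therefore commute fibrewise with $h_\beta(k)$, the symmetric piece $-\tfrac12\gamma'$ contributes nothing, and the remaining commutator $[\hat H_\beta, -\gamma\,\partial_k]$ produces the claimed right-hand side $2\gamma(k)(k+i\beta\partial_\tau)$.

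For the boundedness, I would use the pointwise fibre inequality $(i\beta\partial_\tau + k)^2 \le h_\beta(k)$ together with the compactness of $\supp\gamma$ to estimate, for $u$ in the core,
\[
\|2\gamma(k)(k+i\beta\partial_\tau)u\|^2_{{\rm L}^2(\Omega)} \;\le\; 4\|\gamma\|^2_\infty \int_\re (u(k),h_\beta(k)u(k))_{{\rm L}^2(\omega)}\,dk \;=\; 4\|\gamma\|^2_\infty\,(u,\hat H_\beta u)_{{\rm L}^2(\Omega)}.
\]
The right-hand side is dominated by $\|u\|_{{\rm D}(\hat H_\beta)}\|u\|_{{\rm L}^2(\Omega)}$, so the operator $2\gamma(k)(k+i\beta\partial_\tau)$ extends to a bounded map from ${\rm D}(\hat H_\beta)$ into ${\rm L}^2(\Omega)\hookrightarrow {\rm D}(\hat H_\beta)^*$, as required. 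The support condition on $\gamma$ renders $\gamma(k)k$ a uniformly bounded function of $k$, so no separate control of the multiplication by $k$ is necessary.

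The main technical point --- rather than a real obstacle --- is domain bookkeeping: one must justify the formal integration by parts hidden in the symmetric form $\hat A = \tfrac{i}{2}(\gamma\partial_k+\partial_k\gamma)$ on the chosen core, and then close the quadratic-form identity by density and the bound above. Both are immediate once one works with $C_0^\infty(\omega)\otimes\mathcal{S}(\re)$: this space is stable under $\id_\omega\otimes\hat A$, lies in ${\rm D}(\hat H_\beta)$, and is a core for $\hat H_\beta$, so that the identity on it extends to all of ${\rm D}(\hat H_\beta)\times{\rm D}(\hat H_\beta)$.
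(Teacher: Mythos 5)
Your proposal is correct and follows essentially the same route as the paper: a direct fibrewise computation of the commutator using $\partial_k h_\beta(k) = 2(k + i\beta\partial_\tau)$, followed by a boundedness estimate exploiting $(k + i\beta\partial_\tau)^2 \le h_\beta(k)$ and the compact support of $\gamma$. The only cosmetic difference is that you establish an operator bound from ${\rm D}(\hat H_\beta)$ into ${\rm L}^2(\Omega)$ directly, while the paper records the corresponding quadratic-form estimate and then states the same operator conclusion; both lead immediately to boundedness into ${\rm D}(\hat H_\beta)^*$.
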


\begin{proof}
Let $u \in C_0^\infty(\Omega)$. A simple calculation then gives
$$
(\hat H_\beta\, u,\, i (\id_\omega\otimes \hat A)\, u)_{{\rm L}^2(\Omega)} -
(i (\id_\omega\otimes \hat A)\, u,\, \hat H_\beta\, u)_{{\rm L}^2(\Omega)}= 2(u, \gamma\,
(k-i\beta \partial_\tau)\, u)_{{\rm L}^2(\Omega)}.
$$
Hence \eqref{comm} follows. Moreover, from the above equation we easily obtain
\begin{equation} \label{commut-bound}
 \vert ([ \hat H_\beta,\, i(\id_\omega\otimes \hat A)]\, u,\, u)_{{\rm L}^2(\Omega)} \vert \leq \, C\, \big(
\|\hat H_\beta^{1/2}\, u\|_{{\rm L}^2(\Omega)}^2 +\|u\|_{{\rm L}^2(\Omega)}^2\big).
\end{equation}
So $[ \hat H_\beta, i (\id_\omega\otimes \hat A)]$ is
a bounded operator from ${\rm D}(\hat H_\beta^{1/2})$ into ${\rm D}(\hat H_\beta^{1/2})^*$, and hence it is also bounded from ${\rm D}(\hat H_\beta)$ into ${\rm D}(\hat H_\beta)^*$.
\end{proof}

\medskip

\begin{theorem} \label{thm1}
Let  $E \in \re \setminus {\cal
E}$. Then there exist $\delta  >0$, a function $\gamma \in
C_0^\infty(\re)$ and a positive constant $c=c(E,\delta)$ such that in
the form sense on ${\rm L}^2(\Omega)$ we have
\begin {equation} \label{comm 2}
\chi_\mathcal{I}( \hat H_\beta)\, [ \hat H_\beta, i (\id_\omega \otimes \hat A) ]\, \chi_\mathcal{I}( \hat H_\beta) \geq
c\, \chi_\mathcal{I} ^2 (\hat H_\beta),
\end{equation}
where $\mathcal{I} = (E-\delta,  E + \delta)$,  $\chi_\mathcal{I}$  is given by \eqref{chi} and the commutator $[ \hat H_\beta, i (\id_\omega\otimes \hat A) ]$ is understood as a bounded operator from ${\rm D}(\hat H_\beta)$ into ${\rm D}(\hat H_\beta)^*$.
\end{theorem}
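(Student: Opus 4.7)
The plan is to pass to the direct integral picture, in which by Lemma \ref{com} the commutator $[\hat H_\beta, i(\id_\omega \otimes \hat A)]$ acts fiber-wise as $2\gamma(k)(k + i\beta \partial_\tau)$, and to verify the inequality at a.e.\ fiber $k$; the form bound on ${\rm L}^2(\Omega)$ then follows by integration in $k$. The whole argument turns on an adapted choice of $\gamma$ for the given level $E$, and this is where the hypothesis $E \notin \E$ enters decisively.

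First I would identify the finite set $S_E := \{k\in\re : \lambda_\ell(k) = E \text{ for some } \ell\}$; its finiteness is obtained by the compactness argument used in Lemma \ref{l4}, combined with the fact that $\partial_k \lambda_\ell$ never vanishes at a point where $\lambda_\ell = E$, because $E \notin \E_1$. For each $k_0 \in S_E$ put $L(k_0) := \{\ell : \lambda_\ell(k_0) = E\}$; since $E \notin \E_2$, the non-zero derivatives $\{\partial_k \lambda_\ell(k_0)\}_{\ell\in L(k_0)}$ all share a common sign $\sigma(k_0)\in\{\pm 1\}$. I would then choose disjoint open neighborhoods $U_{k_0}\ni k_0$ on which each $\partial_k \lambda_\ell$, $\ell\in L(k_0)$, keeps its sign, and a real-valued $\gamma \in C_0^\infty(\re)$ supported in $\bigcup_{k_0\in S_E} U_{k_0}$ with $\sigma(k_0)\gamma(k_0) > 0$, so that $\gamma(k)\,\partial_k \lambda_\ell(k) \geq 2c_0 > 0$ on a neighborhood $V_\ell \subset U_{k_0}$ of $k_0$ for every $\ell \in L(k_0)$. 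Finally, $\delta>0$ is taken small enough that $\{k : \lambda_\ell(k) \in (E-\delta,E+\delta)\} \subset V_\ell$ for every relevant $\ell$; this uses only continuity and the finite cardinality of $S_E$.

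Using the Rellich decomposition $\chi_\mathcal{I}(h_\beta(k)) = \sum_\ell \chi_\mathcal{I}(\lambda_\ell(k))\,\pi_\ell(k)$ into pairwise orthogonal analytic eigenprojections, I would expand the sandwiched commutator. On each diagonal block, the Feynman--Hellmann identity of Lemma \ref{l2} replaces $\pi_\ell(k+i\beta\partial_\tau)\pi_\ell$ by $\tfrac12\,\partial_k \lambda_\ell(k)\,\pi_\ell(k)$, which in combination with the lower bound on $\gamma\,\partial_k \lambda_\ell$ gives the diagonal contribution $\geq 2c_0\,\chi_\mathcal{I}^2(h_\beta(k))$. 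For each off-diagonal pair $\ell \neq \ell'$ with $\chi_\mathcal{I}(\lambda_\ell)\chi_\mathcal{I}(\lambda_{\ell'}) \neq 0$ somewhere, Lemma \ref{l3} applied on the corresponding component $U_{k_0}$ (further shrunk so that the two bands meet only at $k_0$) yields a quadratic-form bound of order $|\lambda_\ell(k)-\lambda_{\ell'}(k)| \leq 2\delta$ times a diagonal form. Because there are only finitely many such pairs, one can make $\delta$ small enough that the total off-diagonal contribution is absorbed into the diagonal lower bound, giving \eqref{comm 2} with $c=c_0$.

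The main obstacle is the simultaneous construction of $\gamma$: at a crossing point $k_0$ where several analytic bands attain the value $E$, a single sign must be assigned to $\gamma$ on the whole component $U_{k_0}$, and this is possible precisely because $E \notin \E_2$ forces the derivatives $\{\partial_k \lambda_\ell(k_0)\}_{\ell\in L(k_0)}$ to share a common sign. A lesser technical point is that Lemma \ref{l3} is stated for a single crossing; one circumvents this by working locally on each $U_{k_0}$ after an additional shrinking, and by exploiting the finiteness from Lemma \ref{l4} to keep the number of off-diagonal terms under control.
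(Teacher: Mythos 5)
Your proposal is correct and is essentially the same as the paper's proof: both localize to small intervals around the finite level set $\{k:\lambda_\ell(k)=E\}$, choose $\gamma$ so that $\gamma\,\partial_k\lambda_\ell>0$ there (possible precisely because $E\notin\E_1\cup\E_2$), lower-bound the diagonal blocks via the Feynman--Hellmann identity of Lemma \ref{l2}, and absorb the $O(\delta)$ off-diagonal contribution via Lemma \ref{l3}. The only difference is organizational: you index by the analytic bands $\lambda_\ell$ and neighborhoods $U_{k_0}$, $V_\ell$ of the crossing points, whereas the paper encodes the same geometric data through the sorted eigenvalues $E_n$, the preimage intervals $Q(j,n)$, and the interval families $J_{0,l}$, $J_{1,p}$.
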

\begin{proof}
First of all we chose $\delta$ small enough such that
\begin{equation} \label{dist}
{\rm dist} (E, (\E_c\setminus E) ) > \delta,
\end{equation}
which is possible in view of Lemma \ref{l4}. Recall that $\E\subset \E_c$. Next we define
$$
\mathcal{K}(n,E) = \big\{k\in\re\, :\, E_n(k)=E\big\} .
$$
Note that by Lemma \ref{l4} $\K(n,E)$ is finite for every $n$ and $\K(n,E)=\emptyset$ for all $n>N_{E+\delta}$. In the rest of the proof we use the notation $N = N_{E+\delta}$.  Let
$$
\K(E) = \cup_{n=1}^\infty\ \K(n,E) = \cup_{n=1}^{N}\ \K(n,E),
$$
and define
\begin{align*}
\K_0(E) & = \{k\in\re\,: \text{there exists a unique} \ n\ \text{such that}\  E_n(k) = E\}, \\
\K_1(E) & = \K(E)\setminus \K_0(E).
\end{align*}
Now we introduce the sets
$$
B(n,E) = \{k\in \re\, : \, E_n(k) \in (E-\delta, E+\delta) \}.
$$
By Lemma \ref{l4} we have $B(n,E) = \emptyset$ for all $n > N$.  From \eqref{dist}  it follows that each $B(n,E)$ is given by a union of finitely many non-degenerate disjoint open intervals:
$$
B(n,E)=  \cup_{j=1}^{G_n}\, Q(j,n), \qquad Q(j,n) \cap Q(i,n) = \emptyset \quad \text{if} \quad i\neq j.
$$
Moreover, every $Q(j,n)$ contains exactly one element of $\K(n,E)$. We will label the intervals $Q(n,E)$ as follows:
\begin{align*}
Q_0(j,n) & := Q(j,n) \quad \text{if} \quad  Q(j,n) \cap \K(n,E)  \subset \K_0(E)\\
Q_1(j,n) & := Q(j,n) \quad \text{if} \quad  Q(j,n) \cap \K(n,E)  \subset \K_1(E).
\end{align*}
By the hypothesis on $E$ we can take $\delta$ small enough so that
$$
Q_0(j,n) \cap Q_0(j,m) = \emptyset \qquad   n\neq m,
$$
and at the same time
$$
Q_1(j,n) \cap \K_1(E) \neq Q_1(i,m) \cap \K_1(E)
$$
implies
$$
Q_1(j,n)\cap Q_1(i,m) = \emptyset.
$$
Hence, for $\delta$ sufficiently small, we can construct intervals $J_{0,l}$ with $l=1,\dots, L(E)$, and $J_{1,p}$ with $p=1,\dots, P(E)$, such that
\begin{equation} \label{disj-1}
J_{0,l} \cap J_{0,l'} =\emptyset \quad  l\neq l', \quad J_{1,p} \cap J_{1,p'} =\emptyset \quad  p\neq p', \quad J_{0,l} \cap J_{1,p} =\emptyset \quad \forall\ l, \, p ,
\end{equation}
and such that
$$
\M_0(E):= \bigcup_{n=1}^N \Big(\cup_j\, Q_0(j,n)\Big) = \bigcup_{l=1}^{L(E)}\, J_{0,l}, \quad
\M_1(E):= \bigcup_{n=1}^N \Big(\cup_j\, Q_1(j,n)\Big) = \bigcup_{p=1}^{P(E)}\, J_{1,p}.
$$
Moreover, each $J_{0,l}$ contains exactly one element $k_{0,l}$ of $\K_0(E)$ and each $J_{1,p}$ contains exactly one element $k_{1,p}$ of $\K_1(E)$.  By construction, we have
$$
\M(E):= \M_0(E) \cup \M_1(E) = \cup_{n=1}^N B(n,E), \qquad \M_0(E)\cap \M_1(E)=\emptyset.
$$
With these preliminaries  we can proceed with the estimation of the commutator. From Lemma \ref{com} we find that
\begin {align} \label{com-eq1}
 \chi_\mathcal{I}( \hat H_\beta)\,  [ \hat H_\beta, &\,  i(\id_\omega\otimes \hat A)]\,  \chi_\mathcal{I}( \hat H_\beta)= \\
&=2\! \sum_{n,m=1}^\infty  \int_\re^\oplus \chi_\mathcal{I} (E_n(k)) p_n(k)
\gamma(k)(k-i\beta \partial_\tau) \chi_\mathcal{I} (E_m(k))p_m(k)\, dk \nonumber \\
& = 2\! \sum_{n,m=1}^N  \int_{\M_0(E)}^\oplus \chi_\mathcal{I} (E_n(k)) p_n(k)
\gamma(k)(k-i\beta \partial_\tau) \chi_\mathcal{I} (E_m(k))p_m(k)\, dk \nonumber \\
&  \quad +2\! \sum_{n,m=1}^N  \int_{\M_1(E)}^\oplus \chi_\mathcal{I} (E_n(k)) p_n(k)
\gamma(k)(k-i\beta \partial_\tau) \chi_\mathcal{I} (E_m(k))p_m(k)\, dk .
 \nonumber
\end{align}
To estimate the first term on the right hand side of \eqref{com-eq1} we note that by construction of $\M_0(E)$, for each $l=1,\dots, L(E)$ there exists exactly one $n(l)\leq N$ such that
\begin{align*}
 \sum_{n,m=1}^N  \int_{J_{0,l}}^\oplus & \chi_\mathcal{I} (E_n(k)) p_n(k)
\gamma(k)(k-i\beta \partial_\tau) \chi_\mathcal{I} (E_m(k))p_m(k)\, dk =\\
& =  \int_{J_{0,l}}^\oplus \chi_\mathcal{I} (E_{n(l)}(k)) p_{n(l)}(k)
\gamma(k)(k-i\beta \partial_\tau) \chi_\mathcal{I} (E_{n(l)}(k))p_{n(l)}(k)\, dk.
\end{align*}
Moreover, since $E_{n(l)}(k)$ does not cross any other eigenvalue of $h_\beta(k)$ on $J_{0,l}$, it is analytic on $J_{0,l}$. Hence by Lemma \ref{l2} we obtain
\begin{align*}
 \int_{J_{0,l}}^\oplus \chi_\mathcal{I} (E_{n(l)}(k)) p_{n(l)}(k)
& \gamma(k)(k-i\beta \partial_\tau) \chi_\mathcal{I} (E_{n(l)}(k))p_{n(l)}(k)\, dk =\\
&  = \int_{J_{0,l}}^\oplus \chi^2_\mathcal{I}(E_{n(l)}(k))\,  p_{n(l)}(k)\,
\gamma(k)\, \partial_k E_{n(l)} dk.
\end{align*}
In view of \eqref{disj-1} we can choose the function $\gamma$ such that
\begin{equation} \label{gamma-1}
\gamma(k)\, \partial_k E_{n(l)}(k) = |\partial_k E_{n(l)}(k)| \qquad \forall \ k\in J_{0,l}\, , \quad \forall\ l=1,\dots, L(E).
\end{equation}
Note that $|\partial_k E_{n(l)}(k)|$ is strictly positive on $J_{0,l}$. Therefore we have
$$
d_0 := \min_{1\leq l \leq L(E)} \, \inf_{k\in J_{0,l}}\, |\partial_k E_{n(l)}(k)| >0.
$$
Hence,
\begin{align} \label{m0}
 \sum_{n,m=1}^N  \int_{\M_0(E)}^\oplus \chi_\mathcal{I} (E_n(k))\,  p_n(k)\,
\gamma(k) & (k-i\beta \partial_\tau)\,  \chi_\mathcal{I} (E_m(k))p_m(k)\, dk \geq  \\
& \geq d_0 \sum_{n=1}^N  \int_{\M_0(E)}^\oplus \chi_\mathcal{I}^2(E_n(k)) p_n(k)\, dk.
 \nonumber
\end{align}
Let us now estimate the second term on the right hand side of \eqref{com-eq1}. On every interval $J_{1,p}$ we have
\begin{align*}
 \sum_{n,m=1}^N  \int_{J_{1,p}}^\oplus & \chi_\mathcal{I} (E_n(k)) p_n(k)
\gamma(k)(k-i\beta \partial_\tau) \chi_\mathcal{I} (E_m(k))p_m(k)\, dk =\\
& =  \int_{J_{1,p}}^\oplus \sum_{r,r'\in R(p)} \chi_\mathcal{I} (E_{r}(k))\, p_{r}(k)\,
\gamma(k)(k-i\beta \partial_\tau) \chi_\mathcal{I} (E_{r'}(k))\, p_{r'}(k)\, dk
\end{align*}
for some $R(p) \subset \{1,\dots, N\}$. Moreover, from the construction of the intervals $J_{1,p}$ it follows that there exists a family of analytic eigenfunctions $\lambda_s(k), s\in S(p)$, with $S(p)$ being a finite subset of $\N$, such that
each $E_r(k)$ coincides with some $\lambda_s(k)$ on $J_{1,p} \cap (-\infty, k_{1,p})$ and with some $\lambda_{s'}(k)$ on $J_{1,p} \cap (k_{1,p}, \infty)$, where $k_{1,p}$ is the only element of $\K_1(E)$ contained in $J_{1,p}$. Let $\pi_s(k)$ be the eigenprojection associated with $\lambda_s(k)$.
With the help of Lemma \ref{l3} we obtain
\begin{align}
 \int_{J_{1,p}}^\oplus & \sum_{r,r'\in R(p)}  \chi_\mathcal{I} (E_{r}(k))\,   p_{r}(k)
\gamma(k)(k-i\beta \partial_\tau) \chi_\mathcal{I} (E_{r'}(k))p_{r'}(k)\, dk = \nonumber  \\
& = \int_{J_{1,p}}^\oplus \sum_{s,s'\in S(p)} \chi_\mathcal{I} (\lambda_{s}(k)) \, \pi_{s}(k)
\gamma(k)(k-i\beta \partial_\tau) \chi_\mathcal{I} (\lambda_{s'}(k))\, \pi_{s'}(k)\, dk = \nonumber  \\
& = \int_{J_{1,p}}^\oplus \sum_{s\in S(p)} \chi^2_\mathcal{I} (\lambda_{s}(k)) \,
\gamma(k)\, \partial_k\lambda_s(k)\, \pi_{s}(k)\, dk +  \nonumber  \\
&\quad  +\int_{J_{1,p}}^\oplus \sum_{s\neq s'\in S(p)} \chi_\mathcal{I} (\lambda_{s}(k)) \, \pi_{s}(k)
\gamma(k)(k-i\beta \partial_\tau) \chi_\mathcal{I} (\lambda_{s'}(k))\, \pi_{s'}(k)\, dk. \label{jp-1}
\end{align}
Since the intervals $J_{1,p}$ are mutually disjoint and also disjoint from the set $\M_0(E)$, see \eqref{disj-1}, and since the functions $\partial_k\lambda_s(k)$ are either all strictly negative or all strictly positive on every interval $J_{1,p}$, by the construction of $J_{1,p}$, we can choose $\gamma$ such that, in addition to \eqref{gamma-1}, it holds
\begin{equation} \label{gamma-2}
\gamma(k)\, \partial_k \lambda_{s}(k) = |\partial_k \lambda_s(k)| \qquad \forall \, k\in J_{1,p}\, , \ \forall\, s\in S(p), \ \forall\,  p=1,\dots, P(E).
\end{equation}
Moreover,
$$
d_1 := \min_{1\leq p\leq P(E)} \, \min_{s\in S(p)}\,  \inf_{k\in J_{1,p}}\,  |\partial_k \lambda_s(k)| >0.
$$
Now, to control the last term in \eqref{jp-1} assume that $s\neq s'$ and let $b_{\lambda_s,\lambda_{s'}}$ be the constant given in Lemma \ref{l3} with $I=J_{1,p}$. Note that $|J_{1,p}|$ decreases as $\delta\to 0$. From the explicit expression for $b_{\lambda_s,\lambda_{s'}}$, see \eqref{btilde}, it is then easily seen that there exists $b_p>0$, independent of $\delta$, such that
$$
\max_{s,s'\in S(p), s\neq s'}\, b_{\lambda_s,\lambda_{s'}} \leq b_p.
$$
Hence, \eqref{jp-1},  in combination with Lemmata \ref{l2} and \ref{l3}, yields
\begin{align}
 \int_{J_{1,p}}^\oplus & \sum_{r,r'\in R(p)}  \chi_\mathcal{I} (E_{r}(k))\,   p_{r}(k)
\gamma(k)(k-i\beta \partial_\tau) \chi_\mathcal{I} (E_{r'}(k))p_{r'}(k)\, dk \geq \nonumber  \\
& \geq (d_1-c_p\, b_p\, \delta) \int_{J_{1,p}}^\oplus \sum_{s\in S(p)} \chi^2_\mathcal{I} (\lambda_{s}(k)) \, \pi_{s}(k)\, dk = \nonumber  \\
&  = (d_1-c_p\, b_p\, \delta) \int_{J_{1,p}}^\oplus \sum_{r\in R(p)} \chi^2_\mathcal{I} (E_{r}(k)) \, p_{r}(k)\, dk,\nonumber
\end{align}
where $c_p>0$ depends only on $p$. Therefore we obtain
\begin{align} \label{m1}
 \sum_{n,m=1}^N  \int_{\M_1(E)}^\oplus & \chi_\mathcal{I} (E_n(k))\,  p_n(k)\,
\gamma(k)  (k-i\beta \partial_\tau)\,  \chi_\mathcal{I} (E_m(k))p_m(k)\, dk \geq  \\
& \geq (d_1-C_E\, \delta)\,  \sum_{n=1}^N  \int_{\M_1(E)}^\oplus \chi_\mathcal{I}^2(E_n(k)) p_n(k)\, dk,
 \nonumber
\end{align}
with $C_E= \max_{1\leq p\leq P(E)} c_p\, b_p$. Taking into account \eqref{m0}, we thus conclude that for $\delta$ small enough there exists some $c>0$ such that
\begin {align}
& \sum_{n,m=1}^N  \int_{\M(E)}^\oplus \chi_\mathcal{I} (E_n(k)) p_n(k)
\gamma(k)(k-i\beta \partial_\tau) \chi_\mathcal{I} (E_m(k))p_m(k)\, dk \nonumber \\
&  \quad \geq c\,  \sum_{n=1}^N  \int_{\M(E)}^\oplus \chi_\mathcal{I}^2(E_n(k)) p_n(k)\, dk = c\,  \sum_{n=1}^\infty  \int_{\re}^\oplus \chi_\mathcal{I}^2(E_n(k)) p_n(k)\, dk = c\, \chi^2_\mathcal{I}(\hat H_\beta).
 \nonumber
\end{align}
In view of \eqref{com-eq1} this proves the theorem.

\end{proof}

\smallskip

\begin{corl} \label{Cthm1}
Let $E \in \re \setminus \E $ and $\mathcal{I} = (E-\delta, E +
\delta)$ be  given as in Theorem \ref{thm1}. Then
\begin {equation} \label{comm 5}
\chi_\mathcal{I}( H_\beta)\, [ H_\beta, iA] \, \chi_\mathcal{I}( H_\beta) \geq c\,
\chi_\mathcal{I} ^2 (H_\beta),
\end{equation}
where $[H_\beta, i A]$ is understood as a bounded operator from
${\rm D}(H_\beta)$ into ${\rm D}(H_\beta)^*$, and the conjugate operator is defined by \eqref{A} and \eqref{bA}.
\end{corl}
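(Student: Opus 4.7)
The plan is to transfer the fibered Mourre estimate of Theorem \ref{thm1} back to the physical picture via the partial Fourier transform $\F$, which by construction simultaneously block-diagonalizes the Hamiltonian and the conjugate operator.

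First, by \eqref{hbeta} we have $\hat H_\beta = \F H_\beta \F^*$, and by the very definition \eqref{bA} of $A$ we have $A = \F^*(\id_\omega \otimes \hat A) \F$, so that
$$
\F\, H_\beta\, \F^* = \hat H_\beta, \qquad \F\, A\, \F^* = \id_\omega \otimes \hat A.
$$
By the functional calculus, $\F\, \chi_{\mathcal I}(H_\beta)\, \F^* = \chi_{\mathcal I}(\hat H_\beta)$, and $\F$ maps ${\rm D}(H_\beta)$ bijectively onto ${\rm D}(\hat H_\beta)$. Next I would show that the commutator $[H_\beta, iA]$, initially defined as a quadratic form on $C_0^\infty(\Omega)$, extends to a bounded map from ${\rm D}(H_\beta)$ to ${\rm D}(H_\beta)^*$, and that under this extension
$$
\F\, [H_\beta, iA]\, \F^* = [\hat H_\beta,\, i(\id_\omega \otimes \hat A)].
$$
Since $\F$ maps the core $C_0^\infty(\Omega)$ densely into the corresponding core in the fibered picture on which the commutator identity \eqref{comm} of Lemma \ref{com} has been verified, this identity persists as forms on $C_0^\infty(\Omega)$, and the bound \eqref{commut-bound} pulled back through the isometry $\F$ yields the required bound
$$
\bigl|\bigl([H_\beta, iA]\, u,\, u\bigr)_{{\rm L}^2(\Omega)}\bigr| \le C\bigl(\|H_\beta u\|^2_{{\rm L}^2(\Omega)} + \|u\|^2_{{\rm L}^2(\Omega)}\bigr), \qquad u \in C_0^\infty(\Omega),
$$
from which the extension follows by density.

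With these identifications, the conclusion of Theorem \ref{thm1} reads, as a form inequality on ${\rm L}^2(\Omega)$,
$$
\F\, \chi_{\mathcal I}(H_\beta)\, [H_\beta, iA]\, \chi_{\mathcal I}(H_\beta)\, \F^* \;\ge\; c\, \F\, \chi_{\mathcal I}^2(H_\beta)\, \F^*.
$$
Multiplying on the left by $\F^*$ and on the right by $\F$ preserves the inequality (unitary conjugation), and yields \eqref{comm 5}.

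The only nontrivial point is the first step, namely verifying that the form identity $\F\, [H_\beta, iA]\, \F^* = [\hat H_\beta,\, i(\id_\omega \otimes \hat A)]$ is legitimate when the commutators are understood as bounded maps ${\rm D}(H_\beta) \to {\rm D}(H_\beta)^*$ and ${\rm D}(\hat H_\beta) \to {\rm D}(\hat H_\beta)^*$, respectively; everything else is a routine functional-calculus argument. This reduces to the density of the common core $C_0^\infty(\Omega)$ (which is sent by $\F$ into a dense subspace of $C_0^\infty(\omega) \otimes \mathcal{S}(\re)$ in the relevant graph norms), together with continuity of both sides in the appropriate topology, and presents no serious obstacle.
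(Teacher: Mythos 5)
Your proposal is correct and takes essentially the same route as the paper: the paper's own proof of this corollary is simply the one-line statement that it follows from \eqref{hbeta}, \eqref{bA}, and Theorem \ref{thm1}, which is precisely the unitary-conjugation argument you spell out. One small notational point worth flagging: you write $A = \F^*(\id_\omega \otimes \hat A)\F$, whereas \eqref{bA} in the paper reads $A_0 = \F(\id_\omega \otimes \hat A)\F^*$; the direction consistent with $\hat H_\beta = \F H_\beta \F^*$ is the one you used, and this discrepancy in the paper's formula does not affect the argument since the estimate is transferred by a unitary conjugation either way.
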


\begin{proof}
This follows from \eqref{hbeta}, \eqref{bA} and Theorem \ref{thm1}.
\end{proof}

\smallskip

\section{ Perturbation of the constant twisting}
\label{sect-perturbation}

\subsection{Mourre estimate for $[H_{\theta'}, i A]$}
\label{subsect-mourre-perturbed}

\noindent In the sequel we will suppose that
$$
\theta'(x_3) = \beta -\eps(x_3).
$$
In this section we will prove a Mourre estimate for the commutator $[H_{\theta'}, i A]$, see below Theorem \ref{mourre-estim}.
Notice that $H_{\theta'}$ acts as
\begin{equation} \label{decomp1}
H_{\theta'}= H_\beta + W,  \quad W= (2 \varepsilon
\beta-\varepsilon^2)\partial^2_\tau+ 2 \, \varepsilon\,
\partial_\tau\partial_3 ++ \eps'\, \partial_\tau
\end{equation}
on $\h^1_0(\Omega)\cap \h^2(\Omega)$, cf. Corollary \ref{fgr1}. Together with \eqref{decomp1} we will also use the decomposition
\begin{equation} \label{decomp2}
H_{\theta'}= H_0 + U, \quad U= W -\beta^2\partial^2_\tau- 2 \, \beta\, \partial_\tau\partial_3.
\end{equation}

\begin{lemma}\label{difp}
Let $\chi_\mathcal{I}\in C^\infty_0(\re)$ be given by \eqref{chi}. Then the operator $\chi_\mathcal{I}(H_{\theta'})-\chi_\mathcal{I}(H_\beta) $ is compact  in ${\rm L}^2(\Omega)$.
\end{lemma}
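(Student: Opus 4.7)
The plan is to deduce the compactness of $\chi_\mathcal{I}(H_{\theta'})-\chi_\mathcal{I}(H_\beta)$ from the compactness of the resolvent difference, via the Helffer-Sj\"ostrand functional calculus. Set $R(z):=(H_\beta-z)^{-1}$ and $R'(z):=(H_{\theta'}-z)^{-1}$ for $z\in\C\setminus\re$. The second resolvent identity, combined with Corollary \ref{fgr1} and the decomposition \eqref{decomp1}, yields
\begin{equation*}
R'(z) - R(z) \, = \, - R'(z)\, W\, R(z).
\end{equation*}
Here $W$ is a sum of first and second order differential operators whose coefficients $2\beta\varepsilon - \varepsilon^2$, $2\varepsilon$, $\varepsilon'$ depend only on $x_3$ and vanish at $\pm\infty$ by the assumption \eqref{decay-eps}. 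Since the operators $\partial_\tau^2 R(z)$, $\partial_\tau\partial_3 R(z)$ and $\partial_\tau R(z)$ are bounded on ${\rm L}^2(\Omega)$, it is enough to establish that $R'(z)\, M_f$ is compact on ${\rm L}^2(\Omega)$ for every bounded function $f$ of the variable $x_3$ satisfying $f(x_3)\to 0$ as $|x_3|\to\infty$, where $M_f$ denotes multiplication by $f$.

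To prove this, I would approximate $f$ uniformly by $f_n\in C^\infty_0(\re)$. The resulting bound $\|R'(z)(M_f-M_{f_n})\|\leq \|f-f_n\|_{{\rm L}^\infty}\,\|R'(z)\|\to 0$ reduces the problem to the compactness of $R'(z)\,M_{f_n}$. Passing to the adjoint, $(R'(z)M_{f_n})^* = M_{f_n}\,R'(\bar z)$; and for any $u\in {\rm L}^2(\Omega)$ the function $M_{f_n}R'(\bar z)u = f_n\,R'(\bar z)u$ lies in $\h^2(\Omega)\cap \h^1_0(\Omega)$ and is supported in the bounded cylinder $\omega\times\supp f_n$. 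Rellich's embedding theorem applied on this bounded domain then yields the compactness of $M_{f_n}R'(\bar z)$, and hence of $R'(z)M_{f_n}$. Therefore $R'(z)-R(z)\in S_\infty({\rm L}^2(\Omega))$.

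To conclude, since $\chi_\mathcal{I}\in C^\infty_0(\re)$, the Helffer-Sj\"ostrand formula gives
\begin{equation*}
\chi_\mathcal{I}(H_{\theta'}) - \chi_\mathcal{I}(H_\beta) \, = \, -\frac{1}{\pi}\int_\C \overline{\partial}\widetilde\chi_\mathcal{I}(z)\, \bigl(R'(z) - R(z)\bigr)\, dx\,dy,
\end{equation*}
for any almost analytic extension $\widetilde\chi_\mathcal{I}$ of $\chi_\mathcal{I}$ satisfying $|\overline{\partial}\widetilde\chi_\mathcal{I}(z)|\leq C_N\,|{\rm Im}\,z|^N$ for every $N\in\N$. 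The integrand is compact for each $z\in\C\setminus\re$ by the preceding step, and the rapid vanishing of $\overline{\partial}\widetilde\chi_\mathcal{I}$ near $\re$ combined with the bound $\|R'(z)-R(z)\|\leq 2/|{\rm Im}\,z|$ makes the integral converge in operator norm. As a norm limit of compact operators, the left hand side is compact. The main technical difficulty concentrates in the second step: the second order differential character of $W$ has to be absorbed by the smoothing property of $R(z)$ and $R'(z)$, while it is the decay of $\varepsilon,\varepsilon'$ together with the boundedness of the transverse section $\omega$ that makes Rellich's theorem available.
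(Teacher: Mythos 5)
Your proposal is correct and follows the same overall strategy as the paper: plug the second resolvent identity $R'(z)-R(z)=-R'(z)\,W\,R(z)$ into the Helffer--Sj\"ostrand formula, argue that the integrand is compact for $\operatorname{Im} z\neq 0$, and use the rapid decay of $\overline\partial\widetilde\chi_\mathcal{I}$ near the real axis to conclude that the integral converges in operator norm and hence yields a compact operator. The one place where you do genuinely more work is the compactness of $R'(z)\,W\,R(z)$: the paper simply cites \cite{BrKoRaSo} for this fact, whereas you supply a self-contained argument by factoring $W R(z)$ as a sum of products (decaying multiplier in $x_3$)\,$\times$\,(bounded operator $\partial_\tau^2 R(z)$, $\partial_\tau\partial_3 R(z)$, $\partial_\tau R(z)$), then approximating the multiplier uniformly by compactly supported ones and invoking Rellich's theorem on the bounded cylinder $\omega\times\operatorname{supp} f_n$. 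This elaboration is sound; the only small point worth spelling out explicitly is that multiplication by a smooth compactly supported function of $x_3$ preserves both the $\h^2$ regularity and the Dirichlet condition on $\partial\omega\times\re$, so that the image of $M_{f_n}R'(\bar z)$ is a bounded subset of $\h^1_0$ of a bounded domain, which is precisely what the Rellich embedding requires. So you have effectively made the paper's cited compactness step transparent, at the cost of a slightly longer proof.
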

\begin{proof}
The Helffer-Sj\"ostrand formula, \cite{Da, dsj}, gives
\begin{equation}\label{424}
  \chi_\mathcal{I}(H_{\theta'})-\chi_\mathcal{I}(H_\beta)= - \frac{1}{\pi} \int_{\re^2} \frac{\partial
    \tilde{\chi}}{\partial \bar{z}} \, (H_{\theta'}-z)^{-1} W (H_\beta - z)^{-1} \,dx\, dy,
\end{equation}
where $z=x+iy$, and $\tilde{\chi}$ is a compactly supported quasi-analytic
extension of $\chi_\mathcal{I}I$  in $\re^2$ which satisfies
\begin{equation}\label{425}
\sup_{x\in\re}\, \Big|\frac{\partial\tilde{\chi
}}{\partial\overline{z}}(x+i y) \Big|\, \leq \, \mbox{const}\, y^4\;
,\qquad |y|\leq 1 .
\end{equation}
Since  $(H_{\theta'}-z)^{-1} W (H_0 - z)^{-1}$ is compact
whenever $y\neq 0$, see \cite{BrKoRaSo}, it follows that
$\frac{\partial\tilde{\chi}}{\partial \bar{z}} (H_{
\theta'}-z)^{-1} W (H_\beta - z)^{-1}$ is compact for all
$(x,y)\in\re^2$ with $y\neq 0$. Moreover, by the resolvent equation
the norm of $(H_{\theta'}-z)^{-1} W (H_0 - z)^{-1}$ is bounded
by a constant times $y^{-2}$. In view of \eqref{425} the integrand
on the right hand side of \eqref{424} is then uniformly norm-bounded
in $\re^2$ and hence $\chi_\mathcal{I}(H_{\theta'})-\chi_\mathcal{I}(H_\beta)$ is
compact.
\end{proof}

\begin{theorem} \label{mourre-estim}
Let $E \in \re \setminus {\cal E}$ and let $\eps$ satisfy
\eqref{decay-eps}. Choose $\delta>0$ and $\gamma  \in C_0^\infty(\re)$ as
in Theorem \ref{thm1}. Then there exists a positive constant
$c$ and a compact operator $K$ in  ${\rm L}^2(\Omega)$ such that
\begin {equation} \label{comm3}
P_{\mathcal{I}(E,\delta)}\,  [ H_{\theta'}, iA]\,  P_{\mathcal{I}(E,\delta)}  \ \geq \ c\  P_{\mathcal{I}(E,\delta)}^2  + P_{\mathcal{I}(E,\delta)}\, K\, P_{\mathcal{I}(E,\delta)},
\end{equation}
where $ P_{\mathcal{I}(E,\delta)}$ is the spectral projection for the interval
$$
\mathcal{I}(E,\delta):=(E-\delta/2, E + \delta/2),
$$
associated to $H_{\theta'}$.
\end{theorem}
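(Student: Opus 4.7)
The strategy is to transfer the strict Mourre estimate \eqref{comm 5} for $H_\beta$ to one for $H_{\theta'}$, absorbing all the discrepancy created by the perturbation $W$ in \eqref{decomp1} into a compact remainder.

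First, I would fix the smooth cutoff $\chi_\mathcal{I}$ of Theorem \ref{thm1} on $\mathcal{I} = (E-\delta, E+\delta)$ with the parameter $\eta$ in \eqref{eps-int} set equal to $\delta/2$, so that $\chi_\mathcal{I} \equiv 1$ on $\mathcal{I}(E,\delta) = (E-\delta/2, E+\delta/2)$. Writing $P := P_{\mathcal{I}(E,\delta)}$, the identity $P = P\,\chi_\mathcal{I}(H_{\theta'}) = \chi_\mathcal{I}(H_{\theta'})\,P$ gives
$$
P\,[H_{\theta'}, iA]\,P = P\,\chi_\mathcal{I}(H_{\theta'})\,[H_{\theta'}, iA]\,\chi_\mathcal{I}(H_{\theta'})\,P.
$$
By Lemma \ref{difp} I would write $\chi_\mathcal{I}(H_{\theta'}) = \chi_\mathcal{I}(H_\beta) + K_1$ and $\chi_\mathcal{I}^2(H_{\theta'}) = \chi_\mathcal{I}^2(H_\beta) + K_2$, with $K_1, K_2$ compact. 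Expanding the display and using that $[H_{\theta'}, iA]$, understood in analogy with Lemma \ref{com}, is bounded from ${\rm D}(H_{\theta'})$ into its dual (which is guaranteed by the boundedness of $\gamma$ and \eqref{decay-eps}), each cross term containing at least one factor $K_1$ is compact and is absorbed into the remainder. The reduced expression $P\,\chi_\mathcal{I}(H_\beta)\,[H_{\theta'}, iA]\,\chi_\mathcal{I}(H_\beta)\,P$ is then split according to $[H_{\theta'}, iA] = [H_\beta, iA] + [W, iA]$; Corollary \ref{Cthm1} applied to the first summand yields $\chi_\mathcal{I}(H_\beta)\,[H_\beta, iA]\,\chi_\mathcal{I}(H_\beta) \geq c\,\chi_\mathcal{I}^2(H_\beta)$, which, combined with the compactness of $K_2$ and the identity $\chi_\mathcal{I}^2(H_{\theta'})\,P = P$, contributes the principal term $c\,P^2$ of \eqref{comm3} up to a compact operator sandwiched by $P$.

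The heart of the argument, and the main obstacle, is to show that
$$
\chi_\mathcal{I}(H_\beta)\,[W, iA]\,\chi_\mathcal{I}(H_\beta)
$$
is compact in ${\rm L}^2(\Omega)$. The explicit description of $A$ at the end of Section \ref{sect-conjugate}, namely $A = -\tfrac{1}{2}\,\id_\omega\otimes(\Gamma\,x_3 + x_3\,\Gamma)$ with $\Gamma$ the convolution by $\hat\gamma \in \mathcal{S}(\re)$, shows that $A$ acts only in the longitudinal direction and commutes with $\partial_\tau$. Consequently, $[W, iA]$ decomposes into finitely many terms of the form $[\varphi, iA]\,D$, where $\varphi \in \{2\varepsilon\beta - \varepsilon^2,\ \varepsilon,\ \varepsilon'\}$ and $D$ is a first- or second-order differential operator built from $\partial_\tau$ and $\partial_3$; crucially, no $\varepsilon''$ appears, in accordance with the hypothesis \eqref{decay-eps}. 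A direct calculation, based on the kernel representation of $\Gamma$ and the mean value theorem, shows that each $[\varphi, iA]$ is a bounded operator on ${\rm L}^2(\Omega)$ whose effective coefficient inherits the longitudinal decay of $\varphi$ or $\varphi'$. Sandwiching by $\chi_\mathcal{I}(H_\beta)$ tames the differential factor $D$, since the range of $\chi_\mathcal{I}(H_\beta)$ lies in ${\rm D}(H_\beta)$, and the combination of longitudinal decay (from $\varepsilon$ and $\varepsilon'$) with the transverse Hilbert--Schmidt smoothing established in Lemma \ref{lgr3} yields the required compactness. Collecting all compact contributions into a single operator $K$ finishes the argument and produces \eqref{comm3}.
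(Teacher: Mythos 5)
Your proposal follows essentially the same route as the paper's proof: it splits $[H_{\theta'},iA]=[H_\beta,iA]+[W,iA]$, trades $\chi_\mathcal{I}(H_{\theta'})$ for $\chi_\mathcal{I}(H_\beta)$ modulo compacts via Lemma \ref{difp}, invokes Corollary \ref{Cthm1} on the unperturbed part, establishes compactness of the sandwiched $[W,iA]$ through the integral-kernel structure of $A$ together with the decay of $\varepsilon,\varepsilon'$, and finishes by restricting to $P_{\mathcal{I}(E,\delta)}$. The one imprecision is in absorbing the cross terms into the compact remainder: since the commutator is unbounded, one needs the explicit factorization $[H_\beta,iA]=2(i\partial_3+\beta i\partial_\tau)\Gamma$, with $\Gamma$ commuting with $H_\beta$, so that the first-order derivative can be tamed by the adjacent energy cutoff rather than just invoking boundedness into the dual space --- but this is the same mechanism you already describe for the $[W,iA]$ piece, so it is a detail, not a deviation.
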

\begin{proof}
Let $\mathcal{I} = (E-\delta, E + \delta)$.
We proceed in several steps. First we show that there exists $c>0$
and a compact operator $K_1$ in  ${\rm L}^2(\Omega)$ such that
\begin {equation} \label{Comm1}
 \chi_\mathcal{I}( H_{\theta'})[ H_{\beta}, iA]
 \chi_\mathcal{I}( H_{\theta'}) \geq c\, \chi_\mathcal{I} ^2 (H_\beta) +K_1.
\end{equation}
We write
\begin{align*}
\chi_\mathcal{I}( H_{\theta'})[ H_{\beta}, iA] \chi_\mathcal{I}( H_{\theta'}) & = \chi_\mathcal{I}( H_{\beta})[ H_{\beta}, iA] \chi_\mathcal{I}(
H_{\beta})  + \chi_\mathcal{I}(H_{\beta})[ H_{\beta}, iA]
(\chi_\mathcal{I}(H_{\theta'})- \chi_\mathcal{I}(H_{\beta})) \\
&  +  (\chi_\mathcal{I}(H_{\theta'}) -\chi_\mathcal{I}(H_{\beta}))[
H_{\beta}, iA] \chi_\mathcal{I}(H_{\theta'}).
\end{align*}
By Corollary \ref{Cthm1} there exist $c>0$ such that
$$
\chi_\mathcal{I}( H_\beta)[ H_\beta, iA] \chi_\mathcal{I}( H_\beta) \geq c \chi_\mathcal{I} ^2 (H_\beta).
$$
It can be verified by a simple calculation that the operator
$\Gamma$ defined in \eqref{Gamma} commutes with $H_\beta$. Hence
$$
\chi_\mathcal{I}(H_{\beta})[ H_{\beta}, iA] (\chi_\mathcal{I}(H_{\theta'})-  \chi_\mathcal{I}(H_{\beta})) = 2 \chi_\mathcal{I}(H_{\beta}) (
i\partial_3+ \beta i \partial_\tau) \Gamma (\chi_\mathcal{I}(H_{\theta'})-  \chi_\mathcal{I}(H_{\beta})).
$$
We know that $ \Gamma (\chi_\mathcal{I}(H_{\theta'})-
\chi_\mathcal{I}(H_{\beta})) $ is compact (see e.g. Lemma
\ref{difp}). The operators $ (H_{\theta'} +1)^{-1}( i\partial_3+
\beta i \partial_\tau) $   and $ \chi_\mathcal{I}(H_{\theta'})(H_{\theta'} +1)$ are bounded so $\chi_\mathcal{I}(H_{\beta})
( i\partial_3+ \beta i \partial_\tau)$ is bounded too and $ K_{11}:=
\chi_\mathcal{I}(H_{\beta}) ( i\partial_3+\beta i
\partial_\tau) \Gamma (\chi_\mathcal{I}(H_{\theta'})-
\chi_\mathcal{I}(H_{\beta}))$ is compact. The same arguments show that
$$
K_{12}:=(\chi_\mathcal{I}(H_{\theta'}) -\chi_\mathcal{I}(H_{\beta}))[ H_{\beta}, iA]
\chi_\mathcal{I}(H_{\theta'}):= 2(\chi_\mathcal{I}(H_{\theta'}) -\chi_\mathcal{I}(H_{\beta}))
\Gamma ( i\partial_3- \beta i \partial_\tau)   \chi_\mathcal{I}(H_{\theta'})
$$
is compact. Putting $K_1 = K_{11} + K_{12}$ concludes the first step
of the proof.

\medskip

Next we consider $ \chi_\mathcal{I}( H_{\theta'})[ W, iA]
\chi_\mathcal{I}( H_{\theta'})$. For the sake of simplicity we now
write $s$ instead of $x_3$. Defining
$$
\eta(s):= 2 \varepsilon(s) \beta-\varepsilon(s)^2
$$
we get
\begin{equation} \label{comm-sum}
[ W, iA] = [\eta\, , iA]\, \partial^2_\tau+ [\epsilon\,
\partial_s, iA]\, \partial_\tau+ [ \partial_s\, \epsilon,\, iA]\, \partial_\tau.
\end{equation}
We first deal with the term $[\eta\, \partial^2_\tau, iA]= i  [
\eta, A] \partial^2_\tau= - \frac{i}{2} [\eta, \Gamma s+ s\Gamma ]\,
\partial_\tau^2$. For an appropriate test function $\phi $ we obtain
\begin{align*}
\sqrt{2\pi}\, \, ([\eta , \Gamma s+ s \Gamma ]\phi)(s)  & =: (T
\phi)(s)= \eta(s) \int_{\re}
\hat\gamma(s-s') s'\phi(s')\, ds'  +  \eta(s) \int_{\re} s  \hat\gamma(s- s') \phi(s')\, ds' \\
&\quad  - \int_{\re}  \hat\gamma(s- s')  \eta(s')s'\phi(s')\, ds'
 -
\int_{\re} s \hat\gamma(s- s') \eta(s')\phi(s')\, ds'.
\end{align*}
Hence $T$ is an integral operator on ${\rm L}^2(\re)$ with the kernel
$$
T(s,s')= \eta(s) \hat\gamma(s- s') s'  +  \eta(s) s \hat\gamma(s-
s') - \hat\gamma(s- s')  \eta(s')s' -  s \hat\gamma(s- s') \eta(s').
$$
To control the $s$-dependence we rewrite the kernel as
\begin{align} \label{tss'}
T(s,s') & = \eta(s) \hat\gamma(s- s') (s' -s)  + 2 \eta(s) \, s\,
\hat\gamma(s- s')  \\
& - 2\hat\gamma(s- s')  \eta(s')\, s' -  (s-s') \hat\gamma(s- s')
\eta(s'). \nonumber
\end{align}
Next we recall  that if $f \in {\rm L}^q(\re)$, $g\in {\rm L}^p(\re)$, $q \in [2,\infty)$, $1/q + 1/p = 1$, then the Hausdorff--Young inequality $\|\hat{g}\|_{{\rm L}^q(\re)} \leq (2\pi)^{\frac{1}{2} - \frac{1}{p}} \|g\|_{{\rm L}^p(\re)}$ and the interpolation result which we already used in the proof of Lemma \ref{lgr3} (see \cite[Theorem 4.1]{sim} or \cite[Section 4.4]{BeLo}), imply that the integral operator with a kernel of the form $f(s)\, g(s-s')$, $s, s' \in \re$, belongs to the class $S_q$, and hence is
 compact on ${\rm L}^2(\re)$.
By  \eqref{decay-eps},  both functions $\eta(s)$
and $s\eta(s)$ are in ${\rm L}^q(\re)$ for $q$ large enough. Since
$\gamma\in C_0^\infty(\re)$, its Fourier transform $\hat\gamma$ is
in the Schwartz class on $\re$ and therefore in any ${\rm L}^p(\re)$ with
$p\geq 1$. Therefore, the operator $[\eta , \Gamma s+
s \Gamma ]$ is compact on ${\rm L}^2(\re)$. In order to ensure the
compactness of $ \chi_\mathcal{I}( H_{\theta'})\, T
\,\partial^2_\tau\, \chi_\mathcal{I}( H_{\theta'}) $ on
${\rm L}^2(\Omega)$, we note that by Corollary \ref{fgr1} and the closed
graph theorem the operators $H_\beta^{-1}\, H_{\theta'}$ and
$H_{\theta'}^{-1} \, H_\beta$ are bounded on ${\rm L}^2(\Omega)$.
Since $H_{\theta'}\, \chi_\mathcal{I}( H_{\theta'})$ is bounded
too, it suffices to prove that
\begin{equation} \label{aux}
H_\beta^{-1}\, T\, \partial^2_\tau\, H_\beta^{-1}
\end{equation}
is compact on ${\rm L}^2(\Omega)$. To this end we point out that $H_\beta
\geq \id_\omega\otimes(-\Delta_\omega) $ and that the operators $\partial_\tau^2\,
(-\Delta_\omega)^{-1}\, $ and $(-\Delta_\omega)^{-1}$ are respectively bounded
and compact on ${\rm L}^2(\omega)$. Hence $(\id_\omega\otimes(-\Delta_\omega))^{-1}\,
T\, \partial^2_\tau\, (\id_\omega\otimes(-\Delta_\omega))^{-1}$ is a product of a
bounded and a compact operator and hence is compact on
${\rm L}^2(\Omega)$. This yields the compactness of the operator
\eqref{aux}.

In the same way we deal with the remaining terms on the right hand
side of \eqref{comm-sum}. As for the the operator
$$
[\partial_\tau \epsilon \partial_s,\, iA]= - \frac{i}{2} \, \,
\partial_\tau\, [ \epsilon  \partial_s, \Gamma s+ s \Gamma ],
$$
with the help of the integration by parts we find that
\begin{align*}
([ \epsilon  \partial_s, \Gamma s+ s \Gamma ] \phi) (s)& =  (2\pi)^{-1/2} (R_1\,
\phi)(s)+ (2\pi)^{-1/2}(R_2\, \phi)(s) \\
& =  (2\pi)^{-1/2}\int_{\re}\, R_1(s,s')\, \phi (s')\, ds'
 +  (2\pi)^{-1/2}\int_{\re}\, R_2(s,s')\, \phi (s')\, ds',
\end{align*}
where the integral kernels $R_1(s,s')$ and $R_2(s,s')$ of the
operators $R_1$ and $R_2$ are given by
\begin{align}
R_1(s,s') & = \epsilon(s)\left( \hat\gamma'(s-s')( s' -s)+
\hat\gamma(s-s')  + 2\, s\, \hat\gamma'(s-s')\right) \label{tss'1}\\
R_2(s,s') & = \epsilon(s')\left(-\hat\gamma'(s-s') s' +
\hat\gamma(s-s') - s\, \hat\gamma(s-s')\right)  + \epsilon'(s')
\left(\hat\gamma(s-s') s' +s \hat\gamma(s-s')\right), \nonumber
\end{align}
and $\hat\gamma'$ denotes the derivative of $\hat\gamma$. As above
we need to write also the kernel $T_2(s,s')$ as a sum of the terms
of the form $f(s)\, g(s-s')$ and $f(s')\, g(s-s')$:
\begin{align}
R_2(s,s') & = \epsilon(s')\left(-2\hat\gamma'(s-s') s' +
\hat\gamma(s-s') - (s-s')\, \hat\gamma(s-s')\right)  \nonumber \\
&  \quad + \epsilon'(s') \left(2\hat\gamma(s-s') s' +(s-s')
\hat\gamma(s-s')\right). \label{tss'2}
\end{align}
Using the assumptions of Theorem \ref{main} and the fact that $\hat\gamma'$
is the Schwartz class on $\re$, we conclude as before that $R_1$ and
$R_2$ are compact on ${\rm L}^2(\re)$ and therefore
$$\chi_\mathcal{I}( H_{\theta'})\, \partial_\tau\, [ \epsilon\,
\partial_s, \Gamma s+ s \Gamma ]\, \chi_\mathcal{I}( H_{\theta'})
$$
is compact on ${\rm L}^2(\Omega)$. The compactness of
$$
\chi_\mathcal{I}( H_{\theta'})\, [ \partial_s \, \epsilon\,
\partial_\tau, iA]\, \chi_\mathcal{I}( H_{\theta'})
$$
follows in a completely analogous way. Hence we obtain
\begin {equation}
\chi_\mathcal{I}( H_{\theta'})[ H_{\theta'}, iA]\, \chi_\mathcal{I}(
H_{\theta'}) \geq c\, \chi_\mathcal{I} ^2 (H_{\theta'}) +K
\end{equation}
where $\mathcal{I} = (E-\delta, E + \delta)$ and $K$ is compact. Now we fix $\eta = \delta/2$ in \eqref{chi}. The statement then follows by multiplying the last inequality from the left and from the right by $P_{\mathcal{I}(E,\delta)}$.
\end{proof}

\subsection{Proof of Theorem \ref{main}}
\label{subsect-sc}

\noindent In order to prove Theorem \ref{main}, we will need, in addition to the Mourre estimate established in Theorem \ref{mourre-estim}, a couple of technical results.
We introduce the norm
$$
\| u\|_{+2,\theta} : = \big( \|H_{\theta'}\, u\|^2_{{\rm L}^2(\Omega)}+ \|u\|^2_{{\rm L}^2(\Omega)}\big)^{1/2}, \qquad u\in \h^2(\Omega)\cap\h^1_0(\Omega),
$$
and recall that if $\eps$ satisfies \eqref{decay-eps}, then in view of Corollary \ref{fgr1}
\begin{equation} \label{norm-eq}
\| u\|_{+2,\theta} \, \asymp \, \| u\|_{+2,0} \, \asymp\, \| u\|_{\h^2(\Omega)}, \qquad  u\in \h^2(\Omega)\cap\h^1_0(\Omega).
\end{equation}

\smallskip

\begin{prop} \label{conditions}
Let $\eps$ satisfy the assumptions of Theorem \ref{main}. Then

\begin{itemize}
\item[(a)] The unitary group $e^{ i t A}$ leaves ${\rm
D}(H_{\theta'})$ invariant. Moreover, for each $u\in  {\rm
D}(H_{\theta'})$, $\sup_{|t|\leq 1} \|e^{ i t A} u\|_{+2,\theta}
<\infty$.

\smallskip

\item[(b)] The operator $B_0 =[H_0, i A]$ defined as a quadratic form on ${\rm D}(A)\cap {\rm D}(H_0)$
is bounded on ${\rm L}^2(\Omega)$.

\smallskip

\item[(c)] The operator $B= [H_{\theta'}, i A]$ defined as a quadratic form on ${\rm D}(A)\cap {\rm D}(H_{\theta'})$
is bounded from ${\rm D}(H_{\theta'})$ into ${\rm D}(H_{\theta'}^{1/2})^*$.

\smallskip

\item[(d)] There is a common core $C$ for $A$ and $H_0$ so that $A$
maps $C$ into $\h^1_0(\Omega)$.

\end{itemize}
\end{prop}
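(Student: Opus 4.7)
The plan is to work in the partial Fourier picture where $\F A \F^* = \id_\omega \otimes \hat A$ and $\F e^{itA} \F^* = \id_\omega \otimes W(t)$, with $W(t)$ given by \eqref{wt}. Throughout I use the norm equivalence \eqref{norm-eq} to replace $\|\cdot\|_{+2,\theta}$ and ${\rm D}(H_{\theta'})$ by $\|\cdot\|_{\h^2(\Omega)}$ and $\h^2(\Omega) \cap \h^1_0(\Omega)$. I would dispatch (b) and (d) first, then (c), leaving (a) as the main technical point.

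Part (b) is immediate from Lemma \ref{com} with $\beta = 0$: the commutator reduces in Fourier to multiplication by $2k\gamma(k) \in {\rm L}^\infty(\re)$, hence is bounded on ${\rm L}^2(\Omega)$. For part (d), I take $C = C_0^\infty(\omega) \otimes \mathcal{S}(\re)$, which is the declared core of $A_0$ in \eqref{bA}, and by \cite[Theorem VIII.33]{RSI} applied to the tensor decomposition $H_0 = (-\Delta_\omega^{{\rm Dir}}) \otimes \id + \id \otimes (-\partial_3^2)$ is also a core for $H_0$. A direct computation shows $A(v \otimes f) = v \otimes \F_1 \hat A \F_1^* f$; since $\F_1, \F_1^*$ preserve $\mathcal{S}(\re)$ and $\hat A$ maps $\mathcal{S}(\re)$ into itself, one has $A(C) \subseteq C \subset \h^1_0(\Omega)$.

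For part (c), I decompose $H_{\theta'} = H_\beta + W$ as in \eqref{decomp1}. Lemma \ref{com} gives $[H_\beta, iA] = 2\gamma(k)(k + i\beta \partial_\tau)$, which is bounded $\h^1(\Omega) \to {\rm L}^2(\Omega)$. Since $A$ acts only in the $x_3$ variable it commutes with $\partial_\tau$, so with $\eta = 2\eps\beta - \eps^2$,
\begin{equation*}
[W, iA] = [\eta, iA]\partial_\tau^2 + 2[\eps, iA]\partial_\tau \partial_3 + 2\eps [\partial_3, iA]\partial_\tau + [\eps', iA]\partial_\tau.
\end{equation*}
The integral-kernel estimates from the proof of Theorem \ref{mourre-estim} (Hausdorff--Young plus Schatten interpolation applied to kernels of the form \eqref{tss'}, \eqref{tss'1}--\eqref{tss'2}) already show that each of $[\eta, iA], [\eps, iA], [\eps', iA]$ is bounded on ${\rm L}^2(\re)$, while $[\partial_3, iA]$ is multiplication by $\gamma(k) \in {\rm L}^\infty$ in Fourier. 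Composing with $\partial_\tau^2, \partial_\tau \partial_3, \partial_\tau : \h^2(\Omega) \to {\rm L}^2(\Omega)$, I conclude that $[H_{\theta'}, iA]$ maps $\h^2(\Omega) \cap \h^1_0(\Omega)$ boundedly into ${\rm L}^2(\Omega) \hookrightarrow {\rm D}(H_{\theta'}^{1/2})^*$.

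The main obstacle is part (a). Here I would exploit that $\gamma \in C_0^\infty(\re)$ with, say, $\supp \gamma \subset [-M, M]$ forces every zero of $\gamma$ to be a fixed point of the ODE \eqref{cauchy}; each connected component of $\{\gamma \neq 0\}$ is flow-invariant, and therefore $\varphi(t, k) = k$ for all $|k| \geq M$ and $t \in \re$. Hence $\varphi(t, \cdot)$ is a diffeomorphism of $\re$ equal to the identity off $[-M, M]$, with $\partial_k \varphi(t, k)$ and its reciprocal uniformly bounded in $t$ via \eqref{dk} by $e^{|t| \|\gamma'\|_\infty}$. In the Fourier picture, Plancherel gives
\begin{equation*}
\|u\|^2_{\h^2(\Omega)} \asymp \int_\re \Big( \|\hat u(\cdot, k)\|^2_{\h^2(\omega)} + k^2 \|\nabla_\omega \hat u(\cdot, k)\|^2_{{\rm L}^2(\omega)} + k^4 \|\hat u(\cdot, k)\|^2_{{\rm L}^2(\omega)} \Big) dk.
\end{equation*}
Since $W(t)$ acts only in $k$ and is unitary on ${\rm L}^2(\re)$, the $\omega$-contributions without $k$-weight are preserved exactly. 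For the $k^n$-weighted pieces, the change of variable $k' = \varphi(t, k)$ produces the factor $\varphi^{-1}(t, k')^n$, dominated by $C(1 + |k'|^n)$ uniformly in $t$ because $|\varphi^{-1}(t, k') - k'| \leq 2M$. The Dirichlet condition $\hat u(\cdot, k) \in \h^1_0(\omega)$ is preserved because $W(t)$ leaves the $x_\omega$-variable alone. This yields $e^{itA}({\rm D}(H_{\theta'})) \subseteq {\rm D}(H_{\theta'})$ with $\sup_{t \in \re} \|e^{itA} u\|_{+2, \theta} < \infty$, which is stronger than what (a) requires.
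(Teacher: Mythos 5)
Parts (a), (b), and (c) of your proposal are correct and follow essentially the same route as the paper. For (a), you recover the paper's argument based on \eqref{u-group}, the fact that the flow $\varphi$ fixes $\re\setminus\supp\gamma$, and a change of variables; your systematic decomposition of the $\h^2$ norm via Plancherel is a slight reorganization of the paper's direct estimate of $\|\Delta(e^{itA}f)\|$, and gives the same uniform-in-$t$ bound. For (c), you show $[H_{\theta'},iA]:{\rm D}(H_{\theta'})\to{\rm L}^2(\Omega)$ directly, whereas the paper establishes the adjoint statement that $(H_{\theta'}+1)^{-1}[H_{\theta'},iA]$ is bounded on ${\rm L}^2$; both give what is claimed, and your decomposition $[\eps\partial_3,iA]=[\eps,iA]\partial_3+\eps[\partial_3,iA]$ is an equally valid way to reduce to the kernel estimates used in the proof of Theorem \ref{mourre-estim}. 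Part (b) is identical.

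However, part (d) contains a genuine error. You take $C=C_0^\infty(\omega)\otimes\mathcal{S}(\re)$ and invoke \cite[Theorem VIII.33]{RSI} to claim that $C$ is a core for $H_0$. That theorem requires each tensor factor to be an operator core for the corresponding summand; but $C_0^\infty(\omega)$ is \emph{not} an operator core for the Dirichlet Laplacian $-\Delta_\omega$. Indeed, the closure of $-\Delta$ restricted to $C_0^\infty(\omega)$ has domain $\h^2_0(\omega)$ (because the graph norm of $-\Delta_\omega$ is equivalent to the $\h^2(\omega)$-norm and $\h^2_0(\omega)$ is by definition the $\h^2$-closure of $C_0^\infty(\omega)$), which is a proper subspace of ${\rm D}(-\Delta_\omega)=\h^2(\omega)\cap\h^1_0(\omega)$ since functions in the latter need not have vanishing normal derivative on $\partial\omega$. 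Thus $-\Delta|_{C_0^\infty(\omega)}$ is not essentially self-adjoint, and the hypothesis of \cite[Theorem VIII.33]{RSI} fails. (It is only a \emph{form} core, not an operator core.) The paper avoids this by taking $C={\rm D}(-\Delta_\omega)\otimes\mathcal{S}(\re)$; this is still contained in ${\rm D}(A)$ — since $A$ acts as $\id_\omega\otimes\F_1\hat A\F_1^*$ and $\mathcal{S}(\re)\subset{\rm D}(\hat A)$ — hence a core for $A$, and it is a core for $H_0$ by the cited theorem with the correct factors ${\rm D}(-\Delta_\omega)$ and $\mathcal{S}(\re)$. With that modification the rest of your argument for (d), namely that $A$ preserves the $\omega$-factor and maps $\mathcal{S}(\re)$ into itself (after conjugation by $\F_1$), goes through unchanged, yielding $A:C\to C\subset\h^1_0(\Omega)$.
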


\begin{proof} Note that $H_0 =-\Delta$ and that ${\rm D}(H_{\theta'})={\rm D}(H_0)=\h^1_0(\Omega)\cap \h^2(\Omega)$,
in view of Corollary \ref{fgr1}.
To prove assertion $(a)$, pick $f\in {\rm D}(H_{\theta'})$ and
denote $g=  \F f$. By Lemma \ref{group-fiber},
\begin{equation} \label{u-group}
(e^{ i t A} f)(x) = \F^*  \big[ (\partial_k\varphi(t,k))^{1/2}\,
g(x_\omega, \varphi(t,k))\big] .
\end{equation}
Hence,
\begin{align} \label{h-norm}
\|\Delta\, (e^{ i t A} f)\|_{{\rm L}^2(\Omega)}^2 & = \| e^{ i t A}(\Delta_\omega f) + \partial_3^2 (e^{ i t A} f)\|_{{\rm L}^2(\Omega)}^2 \\
& \leq \|\Delta_\omega f\|_{{\rm L}^2(\Omega)}^2 +  \| k^2\, (\partial_k\varphi(t,k))^{1/2}\,
g(x_\omega, \varphi(t,k)) \|_{{\rm L}^2(\Omega)}^2, \nonumber
\end{align}
where we have used  the fact that $e^{ i t A}: {\rm L}^2(\Omega)\to {\rm L}^2(\Omega)$ and $\F^* :{\rm L}^2(\Omega) \to {\rm L}^2(\Omega)$ are unitary. Assume that supp$\, \gamma \subset [-k_c,k_c]$ for some $k_c>0$. Then $\varphi(t,k) = k$ and $\partial_k\varphi(t,k) = 1$ for all $k$ with $|k| >k_c$ and all $t\geq 0$, see the proof of Lemma \ref{group-fiber}. We thus obtain
\begin{align*}
\| k^2\, & \partial_k\varphi(t,k)\,
g(x_\omega, \varphi(t,k)) \|_{{\rm L}^2(\Omega)}^2  \leq k_c^4 \int_{\omega\times[-k_c,k_c]} \partial_k\varphi(t,k)\,
|g(x_\omega, \varphi(t,k))|^2\, dk\, dx_\omega \\
& + \int_{\Omega}  k^4\, |g(x_\omega, k)|^2\, dk\, dx_\omega \, \leq \, k_c^4 \int_{\Omega}
|g(x_\omega, z)|^2\, dz\, dx_\omega + \int_{\Omega}  k^4\, |g(x_\omega, k)|^2\, dk\, dx_\omega \\& =
k_c^4\, \| f\|^2_{{\rm L}^2(\Omega)} +  \| \partial_3^2 f\|^2_{{\rm L}^2(\Omega)},
\end{align*}
where in the first integral on right hand side we have used the
change of variables $z=\varphi(t,k)$ taking into account that
$\partial_k\varphi(t,k) >0$, see \eqref{dk}. In view of \eqref{norm-eq} and
\eqref{h-norm} we have
$$
\|e^{ i t A} f\|_{+2,0}^2  = \|\Delta\, (e^{ i t A} f)\|_{{\rm L}^2(\Omega)}^2  +\| f\|_{{\rm L}^2(\Omega)}^2 \, \leq \, {\rm const}\, \|f\|_{\h^2(\Omega)}^2.
$$
This implies that $\sup_{|t|\leq 1} \|e^{ i t A} f\|_{+2,\theta} < \infty$, see \eqref{norm-eq}. Moreover,
since $e^{ i t A} f=0$ on $\partial\Omega$,  see \eqref{u-group}, we  find that $e^{ i t A} f \in {\rm D}(H_{\theta'})$. This proves (a).
Next we note that by Lemma \ref{com}
$$
[H_0, iA] = \id_\omega \otimes  \F_1^* (2\, \gamma(k)\, k)\, \F_1
$$
which is a bounded operator on ${\rm L}^2(\Omega)$. This proves (b).

As for assertion (c), note that  $B=[H_\beta,iA] +[W,iA]$. By inequality \eqref{commut-bound} we know
that $(H_\beta+1)^{-1/2} [H_\beta,iA] (H_\beta+1)^{-1}$ is bounded on ${\rm L}^2(\Omega)$. On the other hand,
from the proof of Theorem \ref{mourre-estim} it follows that the same is true for the operator
$(H_{\beta}+1)^{-1/2} [W,iA] (H_{\beta}+1)^{-1}$. Since $(H_\beta+1)^{\alpha}(H_{\theta'}+1)^{-\alpha}$, $\alpha =1/2, 1$, is bounded,  we conclude that $(H_{\theta'}+1)^{-1/2} B(H_{\theta'}+1)^{-1}$ is bounded on ${\rm L}^2(\Omega)$ and (c) follows.
To prove (d) we define $C :=  {\rm D}(-\Delta_\omega) \otimes \mathcal{S}(\re)$.
By definition of $A$,  $C$ is a core for $A$. On the other hand, $C$ is also a core for $H_0$.
Since $\F :C \to C$ is a bijection and since $\hat A : \mathcal{S} \to \mathcal{S}$, it follows that $A: C\to
C\subset \h^1_0(\Omega)$.
\end{proof}

\begin{lemma} \label{double-com}
Let $\eps$ satisfy assumptions of Theorem \ref{main}. Then $(H_{\theta'}+1)^{-1} [B, A]\,
(H_{\theta'}+1)^{-1}$ is bounded as an operator on ${\rm L}^2(\Omega)$.
\end{lemma}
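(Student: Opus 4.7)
\emph{Proof sketch of Lemma \ref{double-com}.}
My plan is to decompose $B = B_1 + B_2$ with $B_1 := [H_\beta, iA]$ and $B_2 := [W, iA]$, and to estimate each double commutator $[B_j, A]$ separately, sandwiched by the resolvents $(H_{\theta'}+1)^{-1}$. Throughout I will freely pass between $(H_{\theta'}+1)^{-1}$ and $(H_\beta+1)^{-1}$, since these differ by the bounded operators $(H_\beta+1)(H_{\theta'}+1)^{-1}$ and its inverse (Corollary \ref{fgr1}).

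For $[B_1, A]$ I work in the fibered picture. By Lemma \ref{com}, $B_1$ acts in the fiber as $2\gamma(k)(k + \beta i \partial_\tau)$, and $\hat A = \tfrac{i}{2}(\gamma \partial_k + \partial_k \gamma)$ acts only on $k$ and commutes with $\beta i \partial_\tau$. A direct computation based on $[k, \hat A] = -i\gamma(k)$ and $[\gamma(k), \hat A] = -i\gamma(k)\gamma'(k)$ shows that $[\hat B_1, \hat A]$ has the form $m_0(k) + m_1(k)\,\beta i \partial_\tau$ with $m_0, m_1 \in C_0^\infty(\re;\re)$ built from $\gamma, \gamma', \gamma^2$. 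Since these are compactly supported bounded multipliers and $(h_\beta(k)+1)^{-1}$ uniformly dominates $i\partial_\tau(h_\beta(k)+1)^{-1}$, the operator $(H_\beta+1)^{-1}[B_1, A](H_\beta+1)^{-1}$ is bounded on ${\rm L}^2(\Omega)$.

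For $[B_2, A]$, recall from the proof of Theorem \ref{mourre-estim} that $A = -\tfrac{1}{2}\id_\omega\otimes(s\Gamma+\Gamma s)$ acts only in $s=x_3$, while $W = \eta\partial_\tau^2 + 2\epsilon\,\partial_\tau\partial_s + \epsilon'\partial_\tau$ with $\eta := 2\beta\epsilon - \epsilon^2$. Setting $M := s\Gamma + \Gamma s$, the factors $\partial_\tau$ and $\partial_s$ commute with $M$, so $[[W, iA], A]$ is a linear combination of
$$
[[\eta, M], M]\,\partial_\tau^2, \qquad [[\epsilon\partial_s, M], M]\,\partial_\tau, \qquad [[\epsilon', M], M]\,\partial_\tau,
$$
modulo rearrangements coming from $\partial_s$ falling on $\epsilon$. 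Extending the kernel computation \eqref{tss'}--\eqref{tss'2}, each additional commutation with $M$ injects one further factor of $s$ or $s'$ into the integral kernel. Thus each of the operators above on ${\rm L}^2(\re)$ is a finite sum of integral operators with kernels of the form
$$
K(s, s') = p(s)\, q(s')\, r(s-s'),
$$
where $p$ and $q$ are drawn from $\{s^j \eta,\ s^j \epsilon,\ s^j \epsilon' : 0 \leq j \leq 2\}$, and $r$ is a polynomial of degree at most $2$ in $(s-s')$ times one of $\hat\gamma, \hat\gamma', \hat\gamma''$. By hypothesis \eqref{decay-eps}, and recalling $\phi_{-2}(s) = 1+s^2$, every such $p, q$ lies in ${\rm L}^\infty(\re)$; and $r \in {\rm L}^1(\re)$ since $\hat\gamma$ is Schwartz. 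Schur's test then yields ${\rm L}^2(\re)$-boundedness of each such integral operator.

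Finally, the surviving differential factors $\partial_\tau^2, \partial_\tau\partial_s, \partial_\tau$ on the right are absorbed by $(H_{\theta'}+1)^{-1}$, since by Corollary \ref{fgr1} the resolvent $(H_{\theta'}+1)^{-1} : {\rm L}^2(\Omega) \to \h^2(\Omega)\cap\h^1_0(\Omega)$ is bounded and these derivatives act continuously from $\h^2(\Omega)$ to ${\rm L}^2(\Omega)$. Combined with the left resolvent, this gives boundedness of $(H_{\theta'}+1)^{-1}[B_2, A](H_{\theta'}+1)^{-1}$ on ${\rm L}^2(\Omega)$. The main obstacle is the book-keeping in this third step: the double commutator $[[W, M], M]$ generates a substantial list of kernel terms, and one must check that the weight $\phi_{-2}$ in \eqref{decay-eps} is precisely strong enough to handle the two extra factors of $s$ produced by the iterated commutation with $M$; this is the sense in which the decay assumption \eqref{decay-eps} is sharp for the Mourre machinery.
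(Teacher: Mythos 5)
Your overall strategy matches the paper's: decompose $B=B_1+B_2$ with $B_1=[H_\beta,iA]$, $B_2=[W,iA]$, handle $B_1$ in the fibered representation (where the paper's explicit formula $\B_1 = \F(\gamma^2 + \gamma\gamma'(k-i\beta\partial_\tau))\F^*$ confirms your $m_0(k)+m_1(k)\beta i\partial_\tau$ ansatz), and reduce $B_2$ to explicit one-dimensional kernel estimates after absorbing the transverse derivatives into the resolvents.

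There is, however, a genuine inaccuracy in your structural claim about $[[W,M],M]$. It is true that the single commutator $[W,M]$ has kernels that are finite sums of $p(s)\,r(s-s')$ and $q(s')\,r(s-s')$; this is what the paper records in \eqref{tss'}--\eqref{tss'2}. But the second commutation with the integral operator $M$ produces compositions, not another operator of the same factored form. Concretely, if $T$ has kernel $q(s'')\,r(s-s'')$, then $MT$ has kernel $\int M(s,s'')q(s'')r(s''-s')\,ds''$, where the weight $q$ sits on the \emph{inner} integration variable and does not factor as $p(s)q(s')r(s-s')$. This is exactly why the paper's proof of the lemma contains terms such as $\hat\gamma\ast\bigl(s\eta\cdot(\hat\gamma_1\ast u)\bigr)$ and estimates them by iterating the Young inequality (inner convolution first, then the outer). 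Your appeal to a Schur test does close the argument — indeed $\sup_s\int|K(s,s')|ds' \le \|\hat\gamma\|_1\|s^j\eta\|_\infty\|\hat\gamma_j\|_1$ and its mirror both hold — but it is applied to composed kernels rather than to kernels of the form you claimed. The error is a local one; the conclusion (boundedness from the hypothesis $\|\eps\,\phi_{-2}\|_\infty+\|\eps'\,\phi_{-2}\|_\infty<\infty$, which precisely controls the at most two factors of $s$) survives, and the approach is the same as the paper's once this point is corrected.
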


\begin{proof}
Recall that $B=[H_{\theta'}, i A]$. We write
$$
\B= \B_1+\B_2, \quad \text{where}\quad \B_1= [[H_\beta, i A], i A],
\quad \B_2= [[W, i A], i A].
$$
As for the term $\B_1$, a  direct calculation gives
\begin{equation} \label{double-comm}
\B_1  = \F^*\, [[ \hat H_\beta, i(\id_\omega\otimes\hat A)], i(\id_\omega \otimes\hat A)]\, \F
 = \F^* \left(\gamma(k)^2 +\gamma(k) \gamma'(k) ( k-i\beta\,
\partial_\tau)\right) \F.
\end{equation}
Let $u\in {\rm L}^2(\Omega)$. Similarly as in \eqref{commut-bound} we find out that
$$
| (u,\, (k-i\beta\, \pd_\tau)\, u)_{{\rm L}^2(\Omega)} | \, \leq \, (u, (\hat H_\beta+1)\, u)_{{\rm L}^2(\Omega)}.
$$
Since $\gamma$ and $\gamma'$ are bounded, the last inequality implies that
also $(H_\beta+1)^{-1} \B_1 (H_\beta+1)^{-1}$ is bounded. From Proposition
\ref{op-domain} it then follows that
$$
(H_{\theta'}+1)^{-1} \B_1 (H_{\theta'}+1)^{-1}
$$
is bounded too. As for the remaining part of the double commutator,
we first note that in view of \eqref{comm-sum} and of the fact that
the operators $\partial_\tau (H_{\theta'}+1)^{-1}$ and
$\partial_\tau^2(H_{\theta'}+1)^{-1}$ are bounded, it suffices
to show that
\begin{equation} \label{double-c-1d}
\Big[[\eta\, , \F_1^*\hat A\, \F_1]+ [\epsilon', \, \F_1^* \hat A\, \F_1] + [\epsilon\, \partial_s, \F_1^*\hat A\, {\cal
F_1}] ,\, \F_1^*\hat A \, \F_1\Big]
\end{equation}
is a bounded operator on ${\rm L}^2(\re)$. Let $u\in {\rm L}^2(\re)$ and recall that
$$
(\F_1^* \hat A\, \F_1\, u)(s) = -\frac{1}{2\sqrt{2\pi}}\, \Big( \int_\re \hat\gamma(s-s') s'\, u(s')\, ds' +s \int_\re \hat\gamma(s-s')\, u(s')\, ds'\Big).
$$
It will be useful to introduce the shorthands
$$
\hat\gamma_j(r) = r^j\, \hat\gamma(r).
$$
Note that $\hat\gamma_j \in {\cal S}(\re)$ for all $j\in\N$. We have
\begin{align}
-2\sqrt{2\pi}\,  [\eta\, , \F_1^*\hat A\, \F_1] u  & = 2 \eta(s)\int_{\re}
\hat\gamma(s-s') s' u(s')\, ds' +  \eta(s)\int_{\re}
\hat\gamma_1(s-s') u(s')\, ds'   \label{eta-term} \\
& \quad - \int_{\re} \hat\gamma(s-s') s' \eta(s') u(s')\, ds' -
s\int_{\re} \hat\gamma(s-s') \eta(s') u(s')\, ds' \nonumber \\
& =: \sum_{j=1}^4\,
T_j\, u. \nonumber
\end{align}
Accordingly,
\begin{align*}
-\sqrt{2\pi}\,   [T_1,\,   \F_1^*\hat A \, \F_1] u  &= \eta(s)\int_{\re}\int_{\re}
\hat\gamma(s-s') \, s'^2  \hat\gamma(s'-s'')\,  u(s'')\, ds''  ds' \\
& \quad + \eta(s)\int_{\re}\int_{\re}
\hat\gamma(s-s') s'\, s''\,   \hat\gamma (s'-s'')\, u(s'')\, ds''  ds'
\\
&   \quad- \int_{\re} \int_{\re} \hat\gamma(s-s')\,  \eta(s') \,
\hat\gamma (s'-s'')\, s'\, s''\,  u(s'')\, ds''  ds' \\
& \quad -s \int_{\re}\int_{\re} \hat\gamma(s-s')\,  \eta(s') \,
\hat\gamma (s'-s'')\, u(s'')\,  ds''  ds'\\
& =: \sum_{j=1}^4\,
T_{1,j}\, u.
\end{align*}
Note that
$$
s'^2 \, \hat\gamma(s-s')   \hat\gamma(s'-s'') =s^2\, \hat\gamma(s-s')  \hat\gamma(s'-s'') + \hat\gamma_2(s-s') \hat\gamma(s'-s'') -2s\, \hat\gamma_1(s-s')   \hat\gamma(s'-s''),
$$
which implies
\begin{align*}
T_{1,1}\, u(s) & = s^2 \eta(s)\,  \hat\gamma \ast ( \hat\gamma\ast u) + \eta(s)\, \hat\gamma_2 \ast(\hat\gamma\ast u) -2 s\, \eta(s)\, \hat\gamma\ast(\hat\gamma_1\ast u).
\end{align*}
Hence, by a repeated use of the Young inequality
\begin{equation} \label{young}
\|g \ast h\|_p \leq C\, \|g\|_q\, \|h\|_r, \quad \frac 1q +\frac
1r = 1 +\frac 1p,
\end{equation}
with $p=q=2$ and $r=1$, we get
$$
\|T_{1,1}\, u\|_2 \leq C_{1,1} \left(\|s^2\eta\|_\infty \, \|\hat\gamma\|_1^2 + \|\eta\|_\infty
\|\hat\gamma\|_1\, \|\hat\gamma_2\|_1  +\|s\,
\eta\|_\infty\, \|\hat\gamma\|_1\, \|\hat\gamma_1\|_1 \right)\, \|u\|_2,
$$
for some $C_{1,1} < \infty$. Moreover, since
$$
s' s'' \hat\gamma(s-s')   \hat\gamma(s'-s'') = s'^2 \, \hat\gamma(s-s')   \hat\gamma(s'-s'') -s \, \hat\gamma(s-s')   \hat\gamma_1(s'-s'') +\hat\gamma_1(s-s')   \hat\gamma_1(s'-s''),
$$
with the help of \eqref{young} we obtain
$$
\|T_{1,2}\, u\|_2 \leq \|T_{1,1}\, u\|_2 + C_{1,2} \left( \|\eta\|_\infty
\|\hat\gamma_1\|_1^2   +\|s\,
\eta\|_\infty\, \|\hat\gamma\|_1\, \|\hat\gamma_1\|_1 \right)\, \|u\|_2.
$$
As for
$T_{1,3}$, we note that
\begin{align*}
T_{1,3}\, u & =  \hat\gamma \ast (s\eta (\hat\gamma_1\ast
u)) -\hat\gamma \ast (s^2\eta (\hat\gamma \ast u)),
\end{align*}
which, in combination with \eqref{young}, implies
$$
\|T_{1,3}\, u\|_2 \leq C_{1,3} \left( \|s \eta\|_\infty
\|\hat\gamma_1\|_1\, \|\hat\gamma\|_1 + \|s^2 \eta\|_\infty
\|\hat\gamma\|_1\, \|\hat\gamma\|_1\right)\, \|u\|_2.
$$
Next, for $T_{1,4}\, u$ we find
\begin{align*}
T_{1,4}\, u & =  -\hat\gamma_1 \ast (\eta (\hat\gamma_1\ast
u)) +\hat\gamma_1 \ast (s\eta (\hat\gamma \ast u))
-\hat\gamma \ast (s\eta(\hat\gamma_1\ast u))
+ \hat\gamma \ast (s^2\eta(\hat \gamma\ast u)).
\end{align*}
By using again \eqref{young} we get
$$
\|T_{1,4}\, u\|_2 \leq C_{1,4} \left( \| \eta\|_\infty
\|\hat\gamma_1\|_1^2  +2 \|s  \eta\|_\infty
\|\hat\gamma_1\|_1\|\hat\gamma\|_1+  \| s^2\eta\|_\infty
\|\hat\gamma\|_1^2 \right)\, \|u\|_2.
$$
This implies that
$$
 \|[T_1, \F_1^*\hat A\,  \F_1]\, u\|_ 2= \frac 12 \sum_{j=1}^4 \|T_{1,j}\, u\|_ 2 \leq C_1\, \|u\|_2,
$$
for some $C_1<\infty$.  As for the term $[T_2,  \F_1^* \hat A\,  \F_1]\, u$,  we find out that
\begin{align*}
-2\sqrt{2\pi}\,   [T_2,\,   \F_1^*\hat A \, \F_1] u  &= -\eta\,  \hat\gamma_1 \ast
( \hat\gamma_1\ast u) -2\, \eta\, \hat\gamma_2 \ast ( \hat\gamma\ast u)  + 2s\, \eta\, \hat\gamma_1 \ast
( \hat\gamma\ast u) \\
&\quad - 2 \hat\gamma \ast (s\eta (\hat\gamma_1\ast u)) - \hat\gamma_1 \ast (\eta (\hat\gamma_1\ast
u)).
\end{align*}
Hence by \eqref{young}
$$
\|[T_2, \F_1^*\hat A\,  \F_1]\, u\|_ 2 \leq C_2\,
\left( \|\eta\|_\infty ( \|\hat\gamma_1\|_1^2+ \|\hat\gamma_2\|_1\|\hat\gamma\|_1) +
\|s \eta\|_\infty\, \|\hat\gamma_1\|_1\|\hat\gamma\|_1\right)\, \|u\|_2.
$$
Next we consider the last term on the right hand side of \eqref{eta-term}. A direct
calculation gives
\begin{align*}
-2\sqrt{2\pi}\  [T_4, \F_1^* \hat A\, \F_1] u & = 2\, \hat\gamma_1  \ast (s \eta\,
(\hat\gamma \ast u)) -2\, \hat\gamma_1 \ast ( \eta\, (  \hat\gamma_1 \ast u)) \\
& \quad
+2\,  \hat\gamma \ast (s^2 \eta\, (\hat\gamma\ast u)) -\hat\gamma\ast (s \eta\,  ( \hat\gamma_1 \ast u)).
\end{align*}
By the Young inequality,
$$
\|  [T_4, \F_1^* \hat A\, \F_1] u\|_2 \, \leq \,
C_4\, \big( \| \hat\gamma_1 \|_1 \, \|\hat\gamma\|_1 \,
\|s \eta \|_\infty + \|\hat\gamma_1\|_1^2\, \|\eta \|_\infty +
\|\hat\gamma\|_1^2 \, \| s^2 \eta\|_\infty\ \big)\, \|u\|_2,
$$
with some $C_4 < \infty$. The same argument applies to $ [T_3, \F_1^* \hat A\, \F_1]$.
We thus conclude that the first term in \eqref{double-c-1d} defines a bounded operator in ${\rm L}^2(\re)$. The
same arguments  apply to the second term in
\eqref{double-c-1d} replacing $\eta$ by $\eps'$.
As for the last term in \eqref{double-c-1d}, integration by parts shows that
\begin{align*}
& -2\sqrt{2\pi}\  [\eps \partial_s , \F_1^* \hat A\, \F_1] u  =
\eps (s) \int_{\re} \hat\gamma'(s-s') s' u(s') ds' + \eps(s)
 \int_{\re} \hat\gamma(s-s')  u(s') ds'  \\
& \qquad + \eps(s) s \int_{\re} \hat\gamma'(s-s')  u(s') ds'
 +
 \int_{\re} \big[ \hat\gamma(s-s')( \eps(s') +s'\eps'(s') -\hat\gamma'(s-s') s' \eps(s')) \big] u(s') ds' \\
 & \qquad+ s \int_{\re} \big[ \hat\gamma(s-s')  \eps'(s') - \hat\gamma'(s-s')  \eps(s') \big]  u(s') ds' .
\end{align*}
Note that each term on the right hand side of the above equation
is of the same type as one of the terms that we have already treated
above, with $\hat\gamma$ replaced by $\hat\gamma'$ when necessary.
Since $r^j\, \hat\gamma' \in \mathcal{S} (\re)$ for all $j\in\N$,
by following the same line of arguments as above we obtain
\begin{align*}
\big\| \big [ [\epsilon\, \partial_s, \F_1^*\,  \hat A\,
\F_1] ,\, \F_1^* \hat A\,  \F_1 \big]\, u\big\|_2 & \leq \tilde C\, \big( \|\eps\, (1+s^2)\|_\infty+\| \eps' (1+s^2)\|_\infty\big) \, \|u\|_2.
\end{align*}
for some constant $\tilde C< \infty$.
This together with the previous estimates implies that \eqref{double-c-1d} defines a bounded operator in ${\rm L}^2(\re)$.
\end{proof}

\noindent With these prerequisites, we can finally state the
result about the nature of the essential spectrum of
$H_{\theta'}$:

\begin{corl} \label{local}
Let $\eps$ satisfy the assumptions of Theorem \ref{main}. Let $E\in
\re\setminus {\cal E}$ be given and define the interval
$\mathcal{I}(E,\delta)=(E-\delta/2,E+\delta/2)$ as in Theorem
\ref{mourre-estim}. Then:

\begin{itemize}
\item[(a)]  $\mathcal{I}(E,\delta)$ contains at most finitely many
eigenvalues of $H_{\theta'}$, each having finite multiplicity;

\smallskip

\item[(b)] The point spectrum of $H_{\theta'}$ has no accumulation
point in $\mathcal{I}(E,\delta)$;

\smallskip

\item[(c)] $H_{\theta'}$ has no singular continuous spectrum in $\mathcal{I}(E,\delta)$.
\end{itemize}
\end{corl}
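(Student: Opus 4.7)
The plan is to deduce Corollary \ref{local} as a direct application of the abstract Mourre theory, as formulated in \cite{Mo, pss, ABG}. All the substantive analytic work has already been carried out: Theorem \ref{mourre-estim} supplies the Mourre estimate on $\mathcal{I}(E,\delta)$, Proposition \ref{conditions} verifies the domain invariance of the unitary group $e^{itA}$ and the first-order commutator regularity, and Lemma \ref{double-com} provides the double commutator bound. What remains is essentially a matter of checking that these ingredients fit into the standard abstract scheme and then quoting the abstract consequences.

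First I would package the regularity: combining Proposition \ref{conditions}(a)--(d) with Lemma \ref{double-com}, the operator $H_{\theta'}$ is of class $C^2(A)$ in the sense of \cite{ABG}, and on account of Proposition \ref{conditions}(a) the form $(H_{\theta'}+1)^{-1}$ preserves the scale of $A$-smooth vectors. In particular, $B=[H_{\theta'},iA]$ and $[B,iA]$ extend to bounded maps between the appropriate Sobolev-type spaces $\mathcal{D}(H_{\theta'})\to \mathcal{D}(H_{\theta'}^{1/2})^*$ and $\mathcal{D}(H_{\theta'})\to\mathcal{D}(H_{\theta'})^*$, respectively. Together with the strict Mourre estimate of Theorem \ref{mourre-estim} on $\mathcal{I}(E,\delta)$, these are precisely the hypotheses needed to trigger the full conclusions of the abstract theorem.

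For parts (a) and (b), I would invoke the virial theorem: if $u\in\mathcal{D}(H_{\theta'})$ satisfies $H_{\theta'}u=\lambda u$ with $\lambda\in\mathcal{I}(E,\delta)$, then $(u,[H_{\theta'},iA]u)=0$, and the Mourre estimate applied to $u=P_{\mathcal{I}(E,\delta)}u$ yields
\begin{equation*}
0 \,\geq\, c\|u\|^2 + (u,Ku).
\end{equation*}
A standard compactness argument (see \cite[Corollary 7.2.11]{ABG}) then shows that the spectral projection $P_{\mathcal{I}(E,\delta)}\mathbf{1}_{\rm pp}(H_{\theta'})$ is finite rank, giving (a), and that eigenvalues cannot accumulate in $\mathcal{I}(E,\delta)$, giving (b). For part (c), the same hypotheses yield the limiting absorption principle: the boundary values of the resolvent $(H_{\theta'}-\lambda\mp i0)^{-1}$ exist in the weighted spaces $\mathcal{B}(\mathcal{D}(A)^s,\mathcal{D}(A)^{-s})$ with $s>1/2$, for $\lambda$ in any compact subset of $\mathcal{I}(E,\delta)$ avoiding the finite set of eigenvalues, by \cite[Theorem 7.3.1]{ABG}. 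The absence of singular continuous spectrum in $\mathcal{I}(E,\delta)$ follows immediately.

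There is no substantial obstacle at this stage; the only point requiring care is to verify that the precise formulation of the regularity hypotheses adopted in the abstract reference matches what has been proved. In particular, one should check that the bound in Proposition \ref{conditions}(a), stating that $\sup_{|t|\leq 1}\|e^{itA}u\|_{+2,\theta}<\infty$ for $u\in\mathcal{D}(H_{\theta'})$, combined with the boundedness of $[H_{\theta'},iA]$ and $[[H_{\theta'},iA],iA]$, yields the $C^2(A)$ condition used in \cite{ABG}. Finally, Theorem \ref{main} follows by taking the union of the intervals $\mathcal{I}(E,\delta)$ as $E$ ranges over a countable dense subset of $\re\setminus\E$: since $\E$ is locally finite by Lemma \ref{loc-finite}, the complement $\re\setminus\E$ is covered, and statements (a)--(c) of Theorem \ref{main} follow from the corresponding statements of Corollary \ref{local}.
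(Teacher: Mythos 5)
Your proposal is correct and follows essentially the same route as the paper: once Proposition \ref{conditions}, Lemma \ref{double-com}, and Theorem \ref{mourre-estim} are in hand, the statement is a direct invocation of abstract Mourre theory (the paper cites \cite[Theorem 1.2]{pss}, while you unpack the same content via the virial theorem and limiting absorption principle in the $C^2(A)$ framework of \cite{ABG}; these references are interchangeable here). One small terminological slip: the estimate in Theorem \ref{mourre-estim} is not a \emph{strict} Mourre estimate, since it carries the compact remainder $K$, though your subsequent virial argument correctly accounts for this.
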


\begin{proof}
Since $A$ is self-adjoint, the statement follows from  Proposition
\ref{conditions}, Lemma \ref{double-com}, Theorem
\ref{mourre-estim} and \cite[Theorem 1.2]{pss}.
\end{proof}
\begin{proof}[Proof of Theorem \ref{main}]
Let $J\subset \re\setminus\E$ be a compact interval. For each
$E\in J$ choose $\mathcal{I}(E,\delta)$ as in Theorem \ref{mourre-estim}.
Then $J \subset \cup_{E\in J} \mathcal{I}(E,\delta)$ and since $J$ is
compact, there exists a finite subcovering:
\begin{equation} \label{cover}
J\subset \cup_{n=1}^N\, \mathcal{I}(E_n,\delta_n).
\end{equation}
By Corollary \ref{local}(a), each interval
$\mathcal{I}(E_n,\delta_n)$ contains at most finitely many eigenvalues of
$H_{\theta'}$, each of them having finite multiplicity. This
proves assertion (a). Part (b) follows immediately from (a). To
prove (c) assume that $\sigma_{\rm sc}(H_{\theta'}) \cap
(\re\setminus\E) \neq \emptyset$. Since the set $\E$ is locally
finite, see Lemma \ref{loc-finite}, it follows that there exists a
compact interval
$J\subset \re\setminus\E$ such that $\sigma_{\rm sc}(H_{\theta'}) \cap J \neq \emptyset$.
This is in contradiction with \eqref{cover} and Corollary \ref{local}(c).
Hence, $\sigma_{\rm sc}(H_{\theta'}) \cap (\re\setminus\E) = \emptyset$.
Since $\E$ is discrete, this  implies that $\sigma_{\rm sc}(H_{\theta'}) = \emptyset$. \end{proof}

\bigskip

\noindent {\bf Acknowledgements.} The authors were partially
supported  by the Bernoulli Center, EPFL, Lausanne, within
the framework of the Program ``{\em Spectral and Dynamical
Properties of Quantum Hamiltonians}", January -- June 2010.
 H. Kova\v{r}\'{i}k gratefully acknowledges also partial support of the MIUR-PRINÕ08
 grant for the project ``{\em Trasporto ottimo di massa, disuguaglianze geometriche e
 funzionali e applicazioni}'',   and would like to thank as well the Faculty of
 Mathematics of Pontificia Universidad Cat\'olica de Chile for the warm hospitality extended to him.
 G. Raikov was partially supported by by the Chilean Science Foundation {\em Fondecyt} under
Grant 1130591, and by {\em
N\'ucleo Cient\'ifico ICM} P07-027-F.


\bigskip

\bigskip

\newpage

{\sc Ph. Briet}\\
Universit\'e du Sud Toulon Var\\
Centre de Physique Th\'eorique\\
CNRS-Luminy, Case 907\\
13288 Marseille, France\\
E-mail: briet@cpt.univ-mrs.fr\\

{\sc H. Kova\v r\'{\i}k}\\
Dipartimento di Matematica\\
Universit\`a degli studi di Brescia\\
Via Branze, 38 \\
25123  Brescia, Italy\\
E-mail: hynek.kovarik@ing.unibs.it \\

{\sc G. Raikov}\\
Facultad de Matem\'aticas\\
Pontificia Universidad Cat\'olica de Chile\\
Av. Vicu\~na Mackenna 4860\\
 Santiago de Chile\\
E-mail: graikov@mat.puc.cl\\


\end{document}